\def\newaliasedtheorem#1[#2]#3{
  \newaliascnt{#1@alt}{#2}
  \newtheorem{#1}[#1@alt]{#3}
  \expandafter\newcommand\csname #1@altname\endcsname{#3}
}
\newsavebox{\measure@tikzpicture}
\newcommand{\weak}{\overset{*}{\rightharpoonup}}
\newcommand{\Haus}{\mathcal{H}}
\DeclareMathOperator{\id}{id}
\newcommand{\R}{\mathbb{R}}
\newcommand{\eps}{\varepsilon}
\newcommand{\N}{\mathbb{N}}
\newcommand{\A}{\mathcal{A}}
\newcommand{\G}{\mathbb{G}(n,m)}
\newcommand{\SSS}{\mathbb{S}}
\DeclareMathOperator{\dist}{dist}
\DeclareMathOperator{\dv}{div}
\DeclareMathOperator{\Epi}{Epi}
\DeclareMathOperator{\tr}{tr}
\DeclareMathOperator{\spt}{spt}
\DeclareMathOperator*{\intt}{int}
\newcommand{\E}{\mathcal{E}}
\DeclareMathOperator{\loc}{loc}
\DeclareMathOperator{\co}{co}
\DeclareMathOperator{\Lip}{Lip}
\DeclareMathOperator{\im}{Im}
\theoremstyle{plain}
\newtheorem*{NTEO}{Theorem}
\newtheorem{Teo}{Theorem}[section]
\newtheorem{lemma}[Teo]{Lemma}
\newtheorem*{conjecture}{Conjecture}
\newtheorem{prop}[Teo]{Proposition}
\newtheorem{Cor}[Teo]{Corollary}
\newtheorem{Def}[Teo]{Definition}
\newtheorem{remark}[Teo]{Remark}
\numberwithin{equation}{section}
\title{The double and triple bubble problem for stationary varifolds: the convex case}
\author[A. De Rosa and  R. Tione]{Antonio De Rosa\and Riccardo Tione}
\address{Antonio De Rosa
\hfill\break Department of Decision Sciences and BIDSA, Bocconi University, Milano, Italy}
\email{antonio.derosa@unibocconi.it}
\address{Riccardo Tione
	\hfill\break Dipartimento di Matematica "Giuseppe Peano", Università degli Studi di Torino, Via Carlo Alberto 10, 10123 Torino, Italy}
\email{riccardo.tione@unito.it}
\begin{document}

\maketitle

\begin{abstract}
We characterize the critical points of the double bubble problem  in $\R^n$ and the triple bubble problem in $\R^3$, in the case the bubbles are convex. 
\end{abstract}

\par
\medskip\noindent
\textbf{Keywords:} Cluster problem, double bubble, triple bubble, capillary surfaces.
\par
\medskip\noindent
{\sc MSC (2020):  35D30, 49Q05, 49Q20, 52A20, 53A10.
\par
}

\par
\medskip\noindent

\section{Introduction}
The $k$-bubble problem consists in separating $k$ given volumes with the least perimeter. An important literature is dedicated to this problem, which is very well described in the beautiful book of Morgan \cite{MM2}. In particular, in the special case $k=1$, the $k$-bubble problem simplifies to the isoperimetric problem. For $k=2$, Plateau \cite{PP} empirically observed that the optimal configuration is the standard double bubble, that is two spherical caps separated by a spherical cap or a flat disk, meeting at angles of 120 degrees. This observation has been stated as a conjecture by Foisy-Morgan-Sullivan \cite{FF1,MM1,SM}. The $2$-bubble conjecture was solved by Foisy-Alfaro-Brock-Hodges-Zimba in $\R^2$ \cite{FF2}, see also the important contributions of Morgan-Wichiramala \cite{MW}, of Dorff-Lawlor-Sampson-Wilson \cite{DLSW} and of Cicalese-Leonardi-Maggi \cite{CLM}. In $\R^3$, the $2$-bubble conjecture for bubbles of equal volume was settled by Hass-Hutchings-Schlafly in \cite{HHHS} and by Hass-Schlafly \cite{HH}, and for general volumes it was solved by Hutchings-Morgan-Ritor\'e-Ros \cite{HMRR1,HMRR}, see also the structural analysis of $2$-bubbles by Hutchings \cite{HH}. The $2$-bubble conjecture was then solved by Heilmann-Lai-Reichardt-Spielman in $\R^4$ \cite{HLRS} and by Reichardt in $\R^n$ \cite{Re}, see also the alternative proof of Lawlor \cite{L} for arbitrary interface weights. 
Morgan-Sullivan~\cite{SM} have conjectured the optimal shape for every $k$, referring to it as the standard $k$-bubble:
\begin{conjecture}[\cite{SM}]\label{kstandardb}
The standard $k$-bubble in $\R^{n}$ $(1\le k\le n+1)$ is the unique minimizer enclosing $k$ regions of prescribed volume.
\end{conjecture}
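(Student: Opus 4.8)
The plan is to follow the variational strategy that settled the low-complexity cases and to feed its output into the critical-point classification announced in this paper. First I would prove existence of a perimeter-minimizing $k$-cluster of sets of finite perimeter enclosing the prescribed volumes, via the direct method: the perimeter is lower semicontinuous, its sublevel sets are precompact in $L^1_{\loc}$, and a nucleation/truncation argument prevents volume from escaping to infinity. Next, regularity theory (in the spirit of Almgren and Taylor) shows that a minimizer is supported on smooth constant-mean-curvature hypersurfaces meeting in threes at $120^\circ$ along an $(n-2)$-rectifiable edge set, with the remaining singular set of higher codimension. In particular, a minimizer is a stationary, stable integral varifold satisfying the equilibrium (capillary) conditions, so the conjecture reduces to a classification: show that the only such equilibrium configuration realizing the least perimeter is a standard $k$-bubble.

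Here the characterization of critical points established in the present paper plays the central role in the regime where the chambers are convex — for instance the equal-volume double bubble, whose chambers are genuine convex lenses — allowing one to pin down the configuration from the curvature and angle conditions alone. The balancing along the singular edges forces the mean curvatures (pressures) $\kappa_i$ of the faces bounding chamber $i$ to satisfy the cocycle relations $\kappa_i-\kappa_j=\kappa_{ij}$ across each shared face; together with the volume constraints this rigidly determines the radii, and uniqueness follows since any two standard $k$-bubbles with equal volumes are then congruent.

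For general $k$ and $n$, where convexity is unavailable, I would instead try to impose structure before classifying: establish connectedness of each chamber and of the exterior, bound the number of faces, and then prove rotational symmetry of the minimizer by Hutchings-type arguments based on concavity of the least-area function in the volume variables. Within the symmetric class, volume-preserving second-variation (stability) computations would be used to eliminate every non-standard candidate, leaving only the standard $k$-bubble.

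The hard part, and the reason the conjecture remains open beyond small cases, is precisely this passage from equilibrium configuration to minimizer. Minimizers are not convex in general — already for unequal volumes the larger chamber of the standard double bubble is non-convex — so the convex classification of this paper settles the equilibrium problem only on a proper subclass and does not by itself close the conjecture. Extending the classification to non-convex chambers, and controlling the combinatorial explosion of admissible cluster topologies as $k$ grows, is the central difficulty; even the symmetry step is delicate, since proving rotational symmetry without assuming it, and ruling out stable-but-non-standard symmetric configurations through the second variation, is where each increase in $k$ or $n$ has historically demanded substantial new input.
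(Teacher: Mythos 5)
The statement you were asked to prove is not a theorem of this paper: it is the Morgan--Sullivan conjecture, which the paper records as open, attributes to \cite{SM}, and does not prove. The only cases known are those listed in the introduction ($k=2$ in $\R^n$, $k=3$ in $\R^2$, equal-volume $k=4$ in $\R^2$, and the Milman--Neeman result \cite{MNS} for $k\le\min\{4,n\}$). What the paper itself proves is different in kind and narrower in scope: a classification of \emph{stationary} configurations (critical points, not minimizers) for $k=2$ in $\R^n$ and $k=3$ in $\R^3$, under the extra hypothesis that every chamber is convex. So there is no proof in the paper to compare yours against, and your proposal --- as you yourself concede in your final paragraph --- is a research program rather than a proof.

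Beyond that structural point, two steps of the program fail concretely. First, the plan to ``feed the minimizer into the critical-point classification announced in this paper'' is unavailable: minimizing chambers need not be convex (the larger chamber of an unequal-volume standard double bubble is already non-convex, as you note), and even on the convex subclass the paper's theorems cover only $k=2$ (any $n$) and $k=3$ ($n=3$), not general $k\le n+1$; so the classification cannot close even the convex case of the conjecture for $k\ge 4$. Second, your uniqueness argument is circular: the balancing relations $\kappa_i-\kappa_j=\kappa_{ij}$ along interfaces and edges hold for \emph{every} stationary cluster, standard or not, so together with the volume constraints they determine the radii only after one already knows the configuration is a standard bubble; they yield uniqueness within the standard class, not that the minimizer belongs to it. The remaining ingredients you list (direct-method existence, Almgren--Taylor regularity, Hutchings-type symmetrization, second-variation/stability arguments) are indeed the known scaffolding, but the passage from ``stationary and stable with $120^\circ$ angles'' to ``standard'' is precisely the open core of the conjecture, which neither your proposal nor this paper resolves.
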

\noindent
In $\R^2$ the $3$-bubble conjecture was proved by Wichiramala \cite{WW}, and the $4$-bubble conjecture for bubbles of equal volume was proved by Paolini-Tortorelli-Tamagnini \cite{PPAO1,PPAO2}. Recently, Milman-Neeman proved the above conjecture for all $k\leq \min\{4,n\}$ both in $\R^n$ and in $\mathbb S^n$ in the groundbreaking result \cite{MNS}, that is, they reproved the $2$-bubble conjecture for $n\geq 2$ and they solved the $3$-bubble conjecture for $n\geq 3$ and the $4$-bubble conjecture for $n\geq 4$. In a previous work \cite{MNE}, they also solved this problem in the Gaussian setting.
\\
\\
For $k=1$, finite unions of minimizers can be also characterized as the only critical points for the isoperimetric problem. This is the celebrated Alexandrov's theorem~\cite{MR0102114} for smooth bubbles, which has been generalized to finite perimeter sets by Delgadino-Maggi \cite{DEM}. Afterwards, an alternative proof has been provided for anisotropic perimeters \cite{DRKS}. Moreover, quantitative stability versions of these rigidity theorems have been showed in~\cite{DMMN,DG,DG2}.  However, to the best of our knowledge, the characterization of the critical points of the $k$-bubble problem is completely open for $k \geq 2$. 
\\
\\
In this paper we start investigating critical points for $k\geq 2$. More precisely, we characterize the critical configurations of the $2$-bubble problem in $\R^n$ and the $3$-bubble problem in $\R^3$, in the case the bubbles are convex. These problems will be referred to as the \emph{double bubble} and the \emph{triple bubble} problem, respectively, throughout the paper. To achieve this, we combine a variety of tools, among which structural analysis of stationary varifolds, the moving plane method, convex analysis, conformal geometry and the regularity theory for free boundary problems. Our results can be informally summarized as follows.
\begin{NTEO}
For the $k$-bubble problem under the convexity assumption, the only stationary configurations are:
\begin{itemize}
\item if $k=2$:
\begin{itemize}
\item two disjoint (possibly tangent) balls;
\item the standard double bubble;
\end{itemize}
\item if $k=3$: 
\begin{itemize}
\item three disjoint (possibly tangent) balls;
\item a ball and a standard double bubble, disjoint but possibly tangent;
\item a chain triple bubble, as in Definition \ref{LU};
\item the standard triple bubble, as in Definition \ref{STTB}.
\end{itemize}
\end{itemize}
\end{NTEO}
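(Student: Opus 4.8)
The plan is to pass from the varifold formulation to a smooth geometric one, extract the Euler--Lagrange conditions, and then run an Alexandrov-type rigidity argument adapted to clusters. First I would exploit stationarity to record the first variation conditions. Testing the first variation of the perimeter against volume-preserving vector fields yields, via Lagrange multipliers, that each interface is a hypersurface of constant mean curvature, with the multipliers acting as ``pressures'' $p_i$ assigned to each chamber; across the face shared by chambers $i$ and $j$ the mean curvatures obey the balancing relation $H_{ij}=p_i-p_j$ (outer faces having $H=p_i$ since the exterior pressure is $0$). Testing against vector fields supported near the singular set forces the three faces meeting along each triple junction to do so at $120^\circ$ angles (Young's law), and, in $\R^3$ for $k=3$, constrains the isolated points where four chambers meet. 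The regularity theory for the minimal-cluster / free boundary problem then upgrades the varifold to a configuration whose faces are smooth embedded CMC hypersurfaces joined smoothly along their codimension-one junctions.

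Next comes the rigidity step, which is the heart of the argument. Because each bubble is convex, I would run the method of moving planes on the configuration: for each direction, slide a hyperplane from infinity until the reflected cluster first touches the original one, and invoke the strong maximum principle together with the Hopf boundary lemma for the CMC equation --- in the free-boundary version that keeps track of the $120^\circ$ contact angle at junctions --- to conclude symmetry across the touching hyperplane. Convexity is what guarantees that the moving plane is never obstructed by an interior interface in the wrong way and that near a first contact point the comparison is between graphs, so that a touching principle applies. Carrying this out over all directions produces enough reflection symmetry to force each face onto a round sphere or a hyperplane, so that every convex chamber is bounded by finitely many spherical caps and planar disks of constant curvature. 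Conformal (M\"obius) geometry enters here as the natural language for spheres meeting at prescribed angles: an inversion sending a junction to infinity normalizes the configuration and turns the angle conditions into algebraic relations among the spheres' centers and radii.

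Finally I would classify the admissible arrangements of spherical caps. For $k=2$ this is short: either the two convex balls are disjoint (possibly tangent), and Alexandrov's rigidity identifies each as a ball; or they share a single spherical-cap interface, and the relation $H_{12}=p_1-p_2$ together with the $120^\circ$ condition and convexity pins down the standard double bubble. For $k=3$ in $\R^3$ the combinatorics is richer: one enumerates the possible adjacency patterns of three convex chambers (no shared face; one shared face; a chain of two shared faces; all three mutually adjacent), discards those incompatible with convexity and with the angle and balancing conditions using the conformal normalization, and identifies the survivors as the three disjoint balls, a ball disjoint from a standard double bubble, the lined-up triple bubble of Definition \ref{LU}, and the standard triple bubble of Definition \ref{STTB}.

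The main obstacle is the rigidity step: the classical moving plane method is designed for a single embedded CMC hypersurface, whereas here the boundary of a convex chamber is only \emph{piecewise} CMC, with faces of different curvatures joined along the singular set. Adapting the method therefore demands a touching/comparison principle valid across triple junctions and at the tangency points between chambers, and a careful use of convexity to rule out spurious interior intersections of the reflected cluster precisely where the singular set meets the moving plane. Controlling this interaction between the moving plane and the junction structure is the delicate point on which the whole argument rests.
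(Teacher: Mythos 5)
Your skeleton (first variation $\Rightarrow$ CMC faces with pressures and $120^\circ$ junctions; then a rigidity step; then an enumeration of adjacency patterns) matches the paper's architecture, and your Case-analysis for $k=2$ and the four adjacency patterns for $k=3$ are essentially the paper's (Case 1)--(Case 4). But the rigidity step, which you yourself identify as the heart of the argument, contains a genuine error and a genuine gap. The error: moving planes over the whole cluster cannot ``force each face onto a round sphere or a hyperplane,'' because that conclusion is false. In the lined-up triple bubble with \emph{parallel} separating planes $\pi_{12}\parallel\pi_{13}$ (allowed by Definition \ref{LU}), the moving plane method applied to the middle bubble $E_1$ in the slab only yields axial symmetry; one must then invoke Delaunay's classification of CMC surfaces of revolution, and the convexity and $120^\circ$ conditions still admit unduloids and nodoids, not just spheres. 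So your classification step, which assumes all faces are spherical caps or planar disks and then normalizes by M\"obius inversions, would either miss or misdescribe one of the configurations you are supposed to produce. The paper instead treats each bubble separately as a capillary surface in a container (half-space, slab, or wedge) cut out by the separating hyperplanes, and only in the half-space and slab cases does moving planes (plus Delaunay) suffice.

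The gap: the wedge case in which the surface touches the edge line $L$ at exactly one point --- which is precisely what occurs for the standard triple bubble and in excluding several spurious configurations in (Case 3) and (Case 4). Here no ``touching/comparison principle across junctions'' of the kind you invoke exists in usable form, because the surface is not known to be smooth (not even $C^1$ a priori) up to the contact point with $L$, so neither the Hopf boundary lemma nor the recent smooth capillary-in-wedge theorems apply. In the paper this is Theorem \ref{pointrigid}, and it consumes most of Section \ref{s:captem}: moving planes only produce a plane of symmetry (Proposition \ref{pointsym}); the actual rigidity requires building a conformal parametrization of the surface with quantitative blow-up control of second derivatives near the bad vertex (Lemma \ref{MIERSE}, a Whitney-type extension, and Riemann-mapping regularity up to curvilinear corners), then running a Hopf-differential argument with a Schwarz reflection and a removable-singularity analysis for the off-diagonal coefficient $M$. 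Even all of this does \emph{not} prove the surface is a sphere; it only shows the wedge angle must equal $\pi/3$ and identifies the blow-up at the contact point, which is then fed back into the cluster-level combinatorics. Your proposal has no substitute for this machinery. A smaller but real omission of the same flavor: at tangency points (e.g.\ a ball touching a standard double bubble, or two tangent balls) you must show the CMC condition extends \emph{across} the contact point before any Alexandrov-type rigidity applies; the paper does this via uniqueness of convex blow-ups, $C^1$ regularity of convex boundaries, and Lemma \ref{div}, and then uses the Heintze--Karcher inequality rather than smooth Alexandrov, since smoothness at the contact point is exactly what is in question.
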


With a more articulated combinatorics and with a similar machinery to the one developed in this paper, one could investigate the critical points for $k\geq 4$. 
This would pave the way to better understand the conjecture above for the missing case $k=4$ in $\R^3$ and $k\geq 5$ in $\R^n$ with $1\le k\le n+1$, in case the bubbles have equal volumes. Clearly a significant amount of work in this direction is still needed, as a crucial step would be to prove that a minimizer for the $k$-bubble problem, with $1\le k\le n+1$ and with bubbles of equal volumes, consists of convex bubbles. 
\\
\\
We provide here a brief outline of the proof of the main theorem. The first step is to use fine properties in convex analysis to locally parametrize in a suitable way the bubbles on the stationary tangent varifolds at every point. Subsequently, combinatorial arguments together with the stationarity of the cluster, provide a finite amount of configurations of bubbles separated by hyperplanes. One of the most difficult parts of the paper is to uniquely characterize, or at least extract enough information on, the shape of the single bubbles in these configurations. To this aim, we first prove the regularity of the bubbles up to the free boundary of the contact regions with the separating hyperplanes. In particular, we deduce the well-known fact that the bubbles are constant mean curvature surfaces, meeting at almost every point the separating hyperplanes with a constant angle of 120 degrees. Thus we can forget about the whole $k$-bubble and focus separately on the single bubbles forming the cluster. Indeed, a crucial part of  the proof consists in studying capillary surfaces in various geometric configurations. Due to the importance of the theory of capillarity in our proof, let us briefly review the main techniques and results concerning them, to put our strategy into context. Given the vastness of the topic, we cannot provide an exhaustive literature review on capillary surfaces and we refer the reader to \cite{TRIF1,PT,RLC,MCC,PRT,CFR,JWXZ} and references therein for a more complete state of the art.

A capillary hypersurface $S$ in a container $C$ is a constant mean curvature hypersurface whose boundary touches the boundary $\partial C$ forming a constant angle $\gamma$. Due to their importance in geometry and physics, these objects have a long history, see the introduction to \cite{FINB}, and have been considered under various assumptions on $S$, $C$ and $\gamma$. Various questions can be formulated about these objects, but in our case the most interesting one is whether, for specific $C$ and $\gamma$, $S$ can be characterized. In most instances, the actual question is whether $S$ has to be a piece of a sphere. In our case, $S$ is the boundary of a convex set, $C$ is either a half-space, or a strip or a wedge, i.e. a convex sector bounded by two hyperplanes meeting at a line, and $\gamma$ will always be equal to 120 degrees. The case in which $C$ is a half-space or a strip can be handled by the Alexandrov moving plane method, which requires the regularity of $S$ up to the boundary. At present, other methods are available to study in much more detail at least the case in which $C$ is a half-space, see for instance the recent papers \cite{WWX1,DELWE}. The case in which $C$ is a wedge gives rise to many more complications, and it is the reason why we need to restrict ourselves to $\R^3$ in the case $k = 3$. In particular, restricting to surfaces of dimension two allows the use of powerful tools from conformal geometry. Let $\pi_1,\pi_2$ be the two half-hyperplanes defining $\partial C$ and meeting at a line $L$. We assume $S$ intersects both $\pi_1$ and $\pi_2$ in sets of dimension two. If the surface $S$ does not touch $L$, then we can employ the spherical reflection method developed in \cite{MCC,PRT} to infer that $S$ is part of a sphere. If $S$ touches $L$ in a segment, then we need to adapt \cite{CFR}, which in turn is based on a careful study of the so-called Hopf differential of $S$. In the case $S$ touches $L$ in a single point, we originally could not show a priori that a convex capillary surface with those properties is necessarily part of a sphere, but we were able to infer enough information on the single surface $S$ to completely characterize the $3$-bubble to which it belongs. However, after the completion of this paper, we learnt from the anonymous referee (to whom we are grateful) that we could study this case by means of the Heintze-Karcher inequality of the very recent work \cite{JWXZ} to conclude that $S$ is part of a sphere.
\\
\\
We conclude the introduction by outlining the structure of the paper. In Section \ref{s:not} we give the notation used throughout the paper. In Section \ref{s:pre} we prove important consequences of the convexity assumption on the bubbles and of the stationarity of the clusters. In Section \ref{main_double}-\ref{sec_3} we prove the main theorem respectively for $k=2$ and $k=3$. To conclude, in Appendix \ref{app:stat} we recall the computation of the first variation for the $k$-bubble problem.

\subsection*{Acknowledgments}
	 A. De Rosa has been partially supported by the NSF DMS Grant No.~1906451, the NSF DMS Grant No.~2112311, the NSF DMS CAREER Award No.~2143124, and the European Union: the European Research Council (ERC), through StG ``ANGEVA'', project number: 101076411. Views and opinions expressed are however those of the authors only and do not necessarily reflect those of the European Union or the European Research Council. Neither the European Union nor the granting authority can be held responsible for them.
	 
	 Both authors are grateful to Bozhidar Velichkov and Xavier Fern\'andez-Real for useful discussion about free boundary regularity and to Jonas Hirsch for his insights about conformal geometry. This research was conducted  while R. Tione was employed at the Max Planck Institute for Mathematics in the Sciences in Leipzig.


\section{Notation}\label{s:not}

\subsection{Basic notation}
Given $A,B\in \R^{n\times n}$ and  $v,w\in \R^n$, we denote the inner products as $\langle A,B\rangle :=\sum_{ij=1}^n A_{ij}B_{ij}$ and $(v,w):=\sum_{i=1}^n v_{i}w_{i}$. $|A|$ and $|v|$ will be the norms induced by the previous inner products. $A^t$ will denote the transpose of $A$.
\\
\\
For a set $E \subset \R^n$, we write $\overline{E}$, $\intt(E)$ and $\partial E$ for the closure, the interior and the boundary of the set, respectively. For a set $E$, the symbol $sE$ for $s > 0$ denotes the dilation of $E$ by $s$. Eventually, for every $m$-dimensional plane $\pi$ of $\R^n$, we will denote by $p_\pi$ the orthogonal projection onto $\pi$. Given any set $A\subset \R^n$, we denote with $B_\eps(A):=\{x\in \R^n: \dist(x,A)<\eps\}$ the $\eps$-tubular neighborhood of $A$. For two distinct points $x_1,x_2 \in \R^n$, we write $[x_1,x_2]$ for the closed segment connecting $x_1$ and $x_2$ and $(x_1,x_2)$ for the open one. For every subset $X\subset \R^d$, we define the convex hull of $X$ as $\co{(X)}$. The open ball in $\R^n$ of radius $r$ and center $x \in \R^n$ will be denoted with $B_r(x)$, while we will denote with $B^\pi_r(z):=B_r(z) \cap \pi$ the $m$-dimensional ball contained inside the $m$-dimensional plane $\pi$ of radius $r$ centered in $z \in \pi$. Given a plane $\pi$ and $\alpha, \beta > 0$, we define the cylindrical neighborhoods of the point $x \in \pi \subset \R^n$ as follows:
\[
B^\pi_{\alpha,\beta}(x) := \{y \in \R^n: p_\pi(y) \in B_\alpha^\pi(x) \text{ and } \dist(y,\pi) < \beta\}.
\]
If $\varphi : \pi \to \R$ is a function and $\pi$ is a hyperplane with normal $\nu$, we will denote the epigraph of $\varphi$ with
\[
\Epi_\pi(\varphi) :=\{y + a\nu \in \R^n:a > \varphi(y), y \in \pi\}.
\]
\subsection{Measures  and rectifiable sets}\label{measures}
Given a locally compact separable metric space $Y$, we denote by \(\mathcal M(Y)\) the set of positive Radon measure on \(Y\). Given a Radon measure \(\mu\) we denote by \(\spt (\mu)\) its support. For a  Borel set \(E\),  \(\mu\llcorner E\) is the  restriction of \(\mu\) to \(E\), i.e. the measure defined by \([\mu\llcorner E](A)=\mu(E\cap A)\) for all Borel set $A$. If \(\eta :\R^n\to \R^n\) is a proper  (i.e. $\eta^{-1}(K)$ is compact for every $K\subset \R^n$ compact), Borel map and \(\mu\) is a Radon measure, we let \(\eta_\# \mu=\mu\circ \eta^{-1}\) be the push-forward of \(\mu\) through \(\eta\). 
Let $\mathcal{H}^m$ be the $m$-dimensional Hausdorff measure. A set \(K\subset \R^n\) is said to be \(m\)-rectifiable if it can be covered, up to an \(\mathcal{H}^m\)-negligible set, by countably many \(C^1\) $m$-dimensional submanifolds. Given a $m$-rectifiable set $K$, we denote with $T_xK$ the approximate tangent space of $K$ at $x$, which exists for $\mathcal{H}^m$-almost every point $x \in K$, \cite[Chapter 3]{SIM}.  

\subsection{Varifolds}\label{introvar}
We will use $\mathbb{G}(n,m)$ to denote the Grassmanian of (un-oriented) $m$-dimensional linear subspaces in $\R^{n}$, often referred to as $m$-planes. Moreover, we identify the spaces
\begin{equation}\label{iden}
\G = \{P \in \R^{n\times n}: P = P^t, \, P^2 = P, \, \tr(P) = m\}.
\end{equation}
Notice that for $m = n -1$, every element $P \in \G$ can be written as $P = \id_n - \nu\otimes \nu$, for a unit vector $\nu \in \R^n$.
\\
\\
An $m$-dimensional varifold $V$ in $\R^n$ is a Radon measure on $\R^{n}\times \G$. The varifold $V$ is said to be rectifiable if there exists an $m$-rectifiable set $\Gamma$ and an $\mathcal{H}^m\llcorner \Gamma$-measurable function $\theta: \Gamma \to (0, \infty)$ such that
\[
V(f) = \int_{\Gamma}f(x,T_x\Gamma)\theta(x)d\mathcal{H}^m(x), \qquad \forall f \in C_c(\R^n\times \G).
\]  
In this case, we denote $V = (\Gamma,\theta)$. If moreover $\theta$ takes values in $\mathbb N$, $V$ is said integer rectifiable. If $\theta = 1$ $\mathcal{H}^m\llcorner \Gamma$-a.e., then we will write $V = \llbracket \Gamma\rrbracket$. We will use $\|V\|$ to denote the projection in $\R^n$ of the measure $V$, i.e.
\[
\|V\|(A) := V(A\times \G), \qquad \forall A \subseteq \R^n, A \text{ Borel}.
\]
Hence  $\|V\|=p_\# V$, where \(p: \R^N\times \G\to \R^n\) is the projection onto the first factor and the push-forward  is intended in the sense of Radon measures. With a slight abuse of notation, we will often write $V\llcorner A$ rather than $V\llcorner (A\times \G)$. Given an $m$-rectifiable varifold $V = (\Sigma,\theta)$ and $\psi : \Sigma \to \R^n$ Lipschitz and proper, the image varifold of $V$ under $\psi$ is defined by
$$\psi^\#V := (\psi(\Sigma), \tilde \theta), \quad \mbox{where} \quad  \tilde \theta (y) := \sum_{x\in \Sigma \cap \psi^{-1}(y)}\theta(x).$$
Since $\psi$ is proper, we have that $\tilde \theta \mathcal{H}^m\llcorner \psi(\Sigma)$ is a Radon measure. By the area formula we get
$$\psi^\#V(f)=\int_{\psi(\Sigma)}f(x,T_x\psi(\Sigma))\tilde \theta(x)d\mathcal{H}^m(x)=\int_{\Sigma}f(\psi(x),d_x\psi(T_x\Sigma))J\psi(x,T_x\Sigma)\theta(x) d\mathcal{H}^m(x),$$
for every $f\in C^0_c(\R^N\times \G)$. Here $d_x\psi(S)$ is the image of $S$ under the linear map $d_x\psi(x)$ and 
\[
J\psi(x,S):=\sqrt{\det\Big(\big(d_x\psi\big|_S\big)^t\circ d_x\psi\big|_S\Big)}
 \]
 denotes the $m$-Jacobian determinant of the differential $d_x\psi$ restricted to the $m$-plane $S$, see \cite[Chapter 8]{SIM}. The area of a rectifiable varifold $V=(\Gamma,\theta)$ is 
 \[
\mathcal{A}(V) := \|V\|(\R^n) = \int_{\Gamma}\theta(z)d\mathcal{H}^m(z). 
 \]
We define the first variation of $V = (\Gamma,\theta)$ as 
\[
[\delta V](g) := \left.\frac{d}{d\eps}\right|_{\eps = 0}\|((\Phi_\eps)^\#(V))\|, \quad \forall g \in C^1_c(\R^n,\R^n),
\]
where $\Phi_\eps$ is the flow associated\footnote{Namely $\Phi_\varepsilon(x) = \gamma_x(\varepsilon)$, where $\gamma_x$ is the solution of the ODE $\gamma'(t) = g(\gamma(t))$ subject to the initial condition $\gamma(0) = x$.} to $g$. By \cite[Lemma A.2]{DDG} we have:
\[
[\delta V](g) = \int_{\R^n}\langle T_x\Gamma,Dg(x)\rangle d\|V\|.
\]
We say that a rectifiable varifold $V = (\Gamma,\theta)$ has \emph{bounded mean curvature in $U \subset \R^n$} if there exists a map $H \in L^\infty(\Gamma \cap U,\R^n; \mathcal{H}^m\llcorner \Gamma)$ such that
\begin{equation}\label{boundedmean}
[\delta V](g) = -\int_{\R^N}(H(x),g(x))d\|V\|(x), \quad \forall g \in C^1_c(U,\R^n).
\end{equation}
If $H \equiv 0$ we say that $V$ is \emph{stationary} and if $H$ is constant we say that $V$ \emph{has constant mean curvature}. A blow-up of a rectifiable varifold $V$ at $x \in \R^n$ is any weak-star accumulation point of the family of measures
\[
V_{x,r} := \eta^{\#}_{x,r}(V),
\]
where $\eta^{x,r}(y) := \frac{y - x}{r}$ is the dilation map centered at $x$.
If $V$ has bounded mean curvature, as a consequence of the monotonicity formula there exists at least one limit point of the family $\{V_{x,r}\}_r$, and every limit point is a cone, see \cite[Theorem 7.3]{SIM}. 
One of the possible stationary limits of this procedure is the varifold obtained by the sum (in the varifold sense) of three $m$-dimensional planes intersecting at their common boundary that is an $(m-1)$-dimensional plane, and forming pairwise angles of $120$ degrees. We name this configuration a \emph{Y cone}. The graph $\Gamma_u$ of a Lipschitz function $u : \Omega\subset \R^m \to \R^{n - m}$ defines an $m$-dimensional varifold with multiplicity 1. Without loss of generality, we can suppose that the graph is parametrized on the first $m$ coordinates, so that $\Gamma_u := \{(x,y) \in \R^{n}: y = u(x)\}$. We will also use the notation $\Gamma_{u,A} := \{(x,u(x)): x \in A\}$, for $A \subset \R^m$. If $u$ is defined on an $m$-dimensional plane $\pi$ in $\R^n$, then we will write $\Gamma_u^\pi$ to denote the graph of $u$ in $\R^n$.
For notational purposes, we define the following maps:
\begin{equation}\label{maps}
M(X):=
\left(
\begin{array}{c}
id_m\\
X
\end{array}
\right), \quad \mbox{and} \quad \A(X) := \sqrt{\det(M(X)^tM(X))}, \quad \forall X \in \R^{(n-m)\times m},
\end{equation}
where $\A(X) $ simply corresponds to the area element of $X$, and
\begin{equation}\label{h}
h: \R^{(n-m)\times m} \to \R^{n\times n}, \quad h(X) := M(X)(M(X)^tM(X))^{-1}M(X)^t.
\end{equation}
Recalling \eqref{iden}, it is easily seen that $h(\R^{(n-m)\times m}) \subseteq \G$, and that $h$ is injective.

\subsection{Finite perimeter and convex sets}

For a set $E \subset \R^n$ of finite perimeter, we denote with $\partial^*E$ its reduced boundary, see \cite[Definition 5.4]{EVG}, and by $n_E$ its exterior measure theoretic normal, so that
\[
\int_{E}\dv\Phi(x) dx = \int_{\partial^*E}(n_E(x),\Phi(x))d\Haus^{n-1}(x),\quad \forall \Phi \in C^\infty_c(\R^n,\R^n).
\]
We will mostly consider convex sets. When we write that $C$ is convex, we implicitly mean that it is an open and bounded set, and otherwise it will always be made explicit. Recall that for a convex set there is a well defined notion of dimension: the dimension of the convex set $C \subset \R^n$ is the largest number $d$ such that $C$ contains $d + 1$ points $x_0,\dots, x_d$ with $\{x_1 - x_0,\dots, x_d - x_0\}$ linearly independent. Given any plane $\pi \subset \R^n$, we have that $\pi \cap C$ is a convex set, and we will write $\partial_\pi(\pi \cap C)$ to denote the boundary of $\pi \cap C$ with respect to the induced topology on $\pi$. We will say that two convex sets $C_1$, $C_2$ are separated if there exists an $(n-1)$-dimensional plane $\pi$ such that $C_1$ lies in one of the connected component of $\R^n \setminus \pi$ and $C_2$ lies in the other one.

\section{The $k$-bubble problem, stationarity and convexity}\label{s:pre}

In this paper we consider the $k$-bubble problem. For an introduction to the subject we refer the reader to \cite[Part IV]{MAG} and \cite[Chapter 13]{MM2}. Consider $k$ disjoint open sets of locally finite perimeter $E_1,E_2,\dots, E_k \subset \R^n$, and let $\E = \{E_1,E_2,\dots,E_k\}$. Define the $(n-1)$-dimensional integer rectifiable varifold in $\R^n$
\begin{equation}\label{V}
V_\E := \frac{1}{2}\sum_{i = 1}^k\partial^*E_i + \frac{1}{2}\partial^*\left[\left(\bigcup_{i = 1}^k\overline{E_i}\right)^c\right].
\end{equation}
We denote $V_{\E} = (\Gamma_{\E},\theta)$. We say that $V_{\E}$ is \emph{stationary for the $k$-bubble problem} if 
\begin{equation}\label{doubprob1}
\left.\frac{d}{dt}\right|_{t= 0}\A((\Phi_t)^\#(V_{\E})) = 0,  \, \mbox{$\forall \Phi_t$ flow of a field $g \in C^\infty_c(\R^n,\R^n)$ satisfying
$|\Phi_t(E_i)| = |E_i| \,\, \forall t \in \R.$}
\end{equation}
Assuming, as we will do in the rest of the paper, that $E_i$ is convex $\forall 1\le i \le k$, we can show the following
\begin{prop}\label{equiva}
If $E_i$ is convex for all $1\le i \le k$, $V_\E$ is stationary for the $k$-bubble problem if and only if there exist $\lambda_i\in \R$ such that
\begin{equation}\label{doubprob}
\int_{\R^n}\langle T_x\Gamma_{\E}, Dg\rangle d\|V_\E\| = \sum_{i = 1}^k\lambda_i\int_{\partial^*E_i}(n_{E_i},g)\theta d\mathcal{H}^{n-1}, \qquad \forall g \in C^\infty_c(\R^n,\R^n).
\end{equation}
\end{prop}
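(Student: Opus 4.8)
The plan is to recognize Proposition \ref{equiva} as a Lagrange multiplier statement: \eqref{doubprob1} asserts that the area is critical among volume-preserving deformations, while \eqref{doubprob} asserts that its first variation is a linear combination of the first variations of the volume constraints. I would first record the two relevant linear functionals on $C^\infty_c(\R^n,\R^n)$: the area first variation $L(g):=\int_{\R^n}\langle T_x\Gamma_\E,Dg\rangle\,d\|V_\E\|=[\delta V_\E](g)$, and, for each $i$, the volume first variation $\mathcal V_i(g):=\int_{\partial^*E_i}(n_{E_i},g)\theta\,d\mathcal H^{n-1}$. Two preliminary remarks: by \eqref{V} and the disjointness of the bubbles, each point of $\partial^*E_i$ either lies on a shared interface or faces the exterior, so $\theta\equiv 1$ $\mathcal H^{n-1}$-a.e. on $\partial^*E_i$ and the weight is immaterial; and, by the Gauss--Green formula for sets of finite perimeter, $\mathcal V_i(g)=\frac{d}{dt}\big|_{t=0}|\Phi_t(E_i)|$ for the flow $\Phi_t$ of $g$ (this is recalled in Appendix \ref{app:stat}). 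Since the first variation of area depends only on the initial velocity of a deformation, the statement to prove is exactly $L=\sum_i\lambda_i\mathcal V_i$.

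The implication \eqref{doubprob}$\Rightarrow$\eqref{doubprob1} is immediate: if $\Phi_t$ is a flow with $|\Phi_t(E_i)|=|E_i|$ for all $t$, then $\mathcal V_i(g)=0$ for every $i$, so the right-hand side of \eqref{doubprob} vanishes and $\frac{d}{dt}\big|_{t=0}\A((\Phi_t)^\#V_\E)=L(g)=0$. For the converse I would use the elementary fact that a linear functional vanishing on the intersection of the kernels of finitely many linear functionals is a linear combination of them; thus it suffices to show that $L$ vanishes on $W:=\bigcap_{i=1}^k\ker\mathcal V_i$, and the multipliers $\lambda_i$ are then produced automatically.

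The core step is therefore to prove $L(g)=0$ for every $g$ with $\mathcal V_i(g)=0$ for all $i$, and here I would run a correction argument. First I would fix reference fields $g_1,\dots,g_k\in C^\infty_c(\R^n,\R^n)$ with $(\mathcal V_i(g_j))_{ij}$ invertible. Their existence follows from convexity: since the $E_i$ are disjoint and convex, any two are separated by a hyperplane, so each contact set $\overline{E_i}\cap\overline{E_j}$ is flat; as a bounded convex body in $\R^n$ cannot have its boundary covered by fewer than $n+1$ hyperplanes, in the range $k\le n+1$ relevant here each $\partial^*E_i$ retains a portion of positive measure facing the exterior, on which a bump field $g_i=\varphi_i n_{E_i}$ supported away from the other bubbles gives $\mathcal V_i(g_i)>0$ and $\mathcal V_j(g_i)=0$ for $j\neq i$, making the matrix diagonal. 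Then, given $g\in W$, I would consider the diffeomorphisms $\Theta_{t,a}$ obtained by composing the time-$t$ flow of $g$ with the time-$1$ flow of $\sum_j a_jg_j$, and apply the implicit function theorem to $a\mapsto(|\Theta_{t,a}(E_i)|-|E_i|)_i$ at $(t,a)=(0,0)$, whose $a$-differential is the invertible matrix above. This yields a smooth curve $a(t)$ with $a(0)=0$ such that $\Psi_t:=\Theta_{t,a(t)}$ preserves all the volumes for $t$ small; differentiating the constraint and using $\mathcal V_i(g)=0$ forces $a'(0)=0$, so $\Psi_t$ is a volume-preserving deformation with $\Psi_0=\id$ and initial velocity exactly $g$. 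Stationarity \eqref{doubprob1} then gives $L(g)=\frac{d}{dt}\big|_{t=0}\A(\Psi_t^\#V_\E)=0$, as desired.

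I expect the main obstacle to be precisely the gap between the two notions of volume preservation: \eqref{doubprob1} only grants criticality along deformations that preserve each $|E_i|$ \emph{for all} $t$, whereas the natural hypothesis $g\in W$ is only infinitesimal. The implicit function correction is what bridges this gap, upgrading an infinitesimally volume-preserving field to a genuine volume-preserving isotopy without altering its initial velocity (so that, the first variation of area being insensitive to the higher-order modification $\sum_j a_j(t)g_j$, criticality can be invoked). A secondary point requiring care is the existence of the reference fields, which is where the convex geometry of the cluster --- separation by hyperplanes and the impossibility of covering a convex boundary by too few flat pieces --- enters.
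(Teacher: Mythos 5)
Your proof has the same skeleton as the paper's own argument (Appendix \ref{app:stat}, following \cite[Appendix B-C]{MNE}): the easy implication \eqref{doubprob} $\Rightarrow$ \eqref{doubprob1}, the linear-algebra reduction (the paper's equivalence \eqref{doubprob} $\Leftrightarrow$ \eqref{ug1}, cited from \cite{DRKS}), and the implicit-function-theorem correction that upgrades an infinitesimally volume-preserving field to a genuinely volume-preserving deformation with the same initial velocity --- this last step is exactly what the paper delegates to \cite[Lemma C.3]{MNE}. Where you genuinely diverge is the construction of the reference fields. The paper builds fields supported in small balls around points $x_{ij}$ of the positive-measure interfaces $\partial E_i \cap \partial E_j$ (properties \eqref{app:primo}-\eqref{app:terzo}), and then proves the spanning statement \eqref{app:graph} by a connectivity argument: from any bubble one reaches the exterior chamber through a chain of interfaces of positive $\mathcal{H}^{n-1}$-measure, see \eqref{path}, proved by casting a ray avoiding the null interfaces. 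You instead place a bump field on an exterior-facing patch of each $\partial E_i$, which makes the matrix $(\mathcal{V}_i(g_j))_{ij}$ diagonal and lets you skip both the interface-regularity discussion and the chain argument --- a real simplification.

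The price is that your construction needs every bubble to have a boundary patch of positive measure on the exterior chamber, which you justify by the fact that the at most $k-1\le n$ separating hyperplanes cannot cover $\partial E_i$. That covering fact is correct (if the at most $n$ faces cut out by the separating hyperplanes covered $\partial E_i$, then $E_i$ would be a bounded, nonempty intersection of at most $n$ open half-spaces, which is impossible), but it is precisely where the restriction $k\le n+1$ enters. Proposition \ref{equiva} is stated for arbitrary $k$, and for $k>n+1$ your reference fields need not exist: in $\R^2$, a triangle $E_1$ whose three edges are fully shared with three other bubbles has no exterior-facing boundary of positive length, so your $g_1$ cannot be constructed, while the paper's fields across the interior interfaces together with the chain argument still produce a spanning family. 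So, as a proof of the statement as written, your argument has a gap for $k>n+1$, although it fully covers the cases $k=2,3$ (with $n\geq 2$ and $n=3$) actually used in this paper. A final shared caveat, not a gap relative to the paper: your corrected deformation $\Psi_t$ (like the family in \cite[Lemma C.3]{MNE}) is an isotopy rather than the flow of a single autonomous field, whereas \eqref{doubprob1} literally quantifies over flows; both your proof and the paper's implicitly read stationarity as invariance along volume-preserving isotopies, which is the intended meaning but is worth flagging.
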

We will sketch the proof of Proposition \ref{equiva} following closely the lines of \cite[Appendix B-C]{MNE} in Appendix \ref{app:stat}.

\subsection{Geometric consequences of convexity}
\begin{lemma}\label{lele}
Let $E_i$ be convex for all $1\le i \le k$. Then, the multiplicity $\theta$ of $V_\E = (\Gamma_\E,\theta)$ is equal to $1$ for $\mathcal{H}^{n -1}$-a.e. $x \in \Gamma_\E$.
\end{lemma}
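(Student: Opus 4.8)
The plan is to show that the multiplicity $\theta$ can only exceed $1$ on an $\mathcal{H}^{n-1}$-negligible portion of $\Gamma_\E$ by exploiting the specific structure of $V_\E$ in \eqref{V} together with the convexity and disjointness of the $E_i$. The starting observation is that $\mathcal{H}^{n-1}$-a.e. point of $\Gamma_\E$ lies in the reduced boundary $\partial^* E_i$ of at least one chamber, since $\Gamma_\E$ is (up to the negligible set where reduced boundaries are not defined) the union $\bigcup_i \partial^* E_i \cup \partial^*\big[(\bigcup_i \overline{E_i})^c\big]$. The multiplicity $\theta(x)$ counts, weighted by $\tfrac12$, how many of these reduced boundaries pass through $x$. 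So the claim $\theta = 1$ amounts to saying that at $\mathcal{H}^{n-1}$-a.e. point exactly two of the sets $\{E_1,\dots,E_k,(\bigcup \overline{E_i})^c\}$ have $x$ in their reduced boundary, and the two outward normals are opposite.

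First I would fix a point $x$ lying in $\partial^* E_i \cap \partial^* E_j$ for $i \ne j$, two distinct chambers. Since $E_i, E_j$ are disjoint open convex sets, at such a common reduced-boundary point the measure-theoretic half-spaces must be opposite: $E_i$ and $E_j$ occupy complementary sides of the common approximate tangent hyperplane at $x$, so $n_{E_i}(x) = - n_{E_j}(x)$. Convexity is what rules out a third chamber contributing at the same point with $\mathcal{H}^{n-1}$-positive frequency: if $x \in \partial^* E_\ell$ for a third index $\ell$, then $E_\ell$ would have to fit into one of the two open half-spaces already fully occupied (in density) by $E_i$ or $E_j$, forcing $E_\ell$ to share a face with one of them. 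Because three disjoint convex sets cannot simultaneously have the same hyperplane as a density-$\tfrac12$ tangent at a common point, the set of such triple points is $\mathcal{H}^{n-1}$-negligible. The same reasoning applies when one of the two chambers is replaced by the exterior component $(\bigcup_i \overline{E_i})^c$, whose reduced boundary is contained in $\bigcup_i \partial^* E_i$ up to an $\mathcal{H}^{n-1}$-null set.

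Next I would combine these local facts. At $\mathcal{H}^{n-1}$-a.e. $x \in \Gamma_\E$, exactly two terms in \eqref{V} are active and they carry opposite normals on the same tangent plane, so the two contributions of $\tfrac12$ add to give $\theta(x) = 1$; the exceptional set where three or more terms are active, or where reduced boundaries fail to exist, has vanishing $\mathcal{H}^{n-1}$-measure by the previous paragraph. A clean way to package the convexity input is via the Besicovitch differentiation / density argument: at a point of positive multiplicity at least two reduced boundaries coincide as tangent planes, and since each $\partial^* E_i$ has, at $\mathcal{H}^{n-1}$-a.e.\ of its points, a one-sided density-$\tfrac12$ configuration for the convex set $E_i$, three such one-sided configurations sharing a hyperplane are incompatible.

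The main obstacle I expect is the rigorous handling of the triple-point estimate, that is, proving that the set where three or more of the reduced boundaries overlap is genuinely $\mathcal{H}^{n-1}$-null rather than merely ``small.'' This requires pinning down the density/tangent-plane behavior of the reduced boundary of a convex set at $\mathcal{H}^{n-1}$-a.e.\ boundary point and then using disjointness to derive a contradiction from three coincident half-space densities. The rest of the argument—identifying which two terms are active and checking the normals are opposite—is essentially bookkeeping once the convex geometry at common boundary points is understood.
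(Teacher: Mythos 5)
Your plan is essentially the paper's: the paper's entire proof consists of the single observation \eqref{intbound}, that triple intersections of the boundaries (the exterior chamber included) are $\mathcal{H}^{n-1}$-null, which is exactly your triple-point estimate. However, you have misplaced the difficulty. The step you call the main obstacle is immediate: if $x \in \partial^*E_i \cap \partial^*E_j \cap \partial^*E_\ell$ for three pairwise disjoint sets, each set has Lebesgue density $\tfrac12$ at $x$, so their disjoint union would have density $\tfrac32 > 1$; hence triple points of \emph{reduced} boundaries are not merely negligible but empty, with no convexity needed at all. Convexity is what the paper uses to prove the stronger statement \eqref{intbound} for \emph{topological} boundaries (false for general disjoint open sets of finite perimeter, e.g. interleaved combs accumulating on a common segment), and it states that stronger form because it is reused later, for instance in the proof of Corollary \ref{corstat}.

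The genuine gap is on the other side of the count, and it is visible in your third paragraph: you assert that at $\mathcal{H}^{n-1}$-a.e. point of $\Gamma_\E$ \emph{exactly} two terms of \eqref{V} are active, but what you justified in the preceding paragraph is only that \emph{at most} two are. Nothing in your plan excludes a positive-$\mathcal{H}^{n-1}$-measure set of points lying in a single reduced boundary, where $\theta$ would equal $\tfrac12$; as written you only obtain $\theta \le 1$. This lower bound is where convexity genuinely enters, in two ways. Either one notes that $|\partial E_i| = 0$ for convex sets, so that $\{E_1,\dots,E_k,(\bigcup_i \overline{E_i})^c\}$ is a Caccioppoli partition of $\R^n$, and invokes the interface structure theorem for such partitions (Part IV of Maggi's book, already cited by the paper): a.e. point of the union of the reduced boundaries lies on exactly one interface $\partial^*E_a \cap \partial^*E_b$. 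Or, more elementarily: if $x \in \partial E_i$ belonged to no other $\partial E_j$ (exterior included), then, the family being finite, a ball $B_\delta(x)$ would avoid every other $\overline{E_j}$ and hence be contained in $\overline{E_i}$, thus in $\intt(\overline{E_i}) = E_i$ by convexity, contradicting $x \in \partial E_i$; one then upgrades ``lies in another topological boundary'' to ``lies in another reduced boundary'' using $\mathcal{H}^{n-1}(\partial E_j \setminus \partial^* E_j) = 0$, again a consequence of convexity (Lipschitz boundaries). To be fair, the paper's one-line proof leaves this same step implicit, but a complete write-up must include it, and it — not the triple-point bound — is the real content of the lemma.
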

\begin{proof}
Let $1 \le i < j < \ell \le k$ be arbitrary. Since $\partial^* E \subset \partial E$, the proof follows from the following simple observation: 
\begin{equation}\label{intbound}
\mathcal{H}^{n -1}(\partial E_i\cap \partial E_j\cap \partial E_\ell) = 0, \qquad \mbox{and} \qquad \mathcal{H}^{n -1}\left(\partial E_i\cap \partial E_j\cap  \partial\left[\left(\bigcup_{r = 1}^k\overline{E_r}\right)^c\right]\right) = 0.
\end{equation}

\end{proof}

Proposition \ref{equiva} and Lemma \ref{lele} immediately imply the following crucial results, which will be the only information we will use in the rest of the paper concerning stationary $k$-bubbles.
\begin{Cor}\label{corstat}
Let $E_i$ be convex for all $1\le i \le k$ and let $V_{\E}$ be stationary for the $k$-bubble problem. Then, the mean curvature of $V_\E$ is bounded. Moreover, the varifolds $\llbracket\partial E_i\setminus \bigcup_{j= 1, j \neq i}^k \partial E_j\rrbracket$ are of constant mean curvature $\lambda_i$ for all $i$. Finally if $\mathcal{H}^{n-1}(\partial E_i \cap \partial E_j) \neq 0$, then $\lambda_i=\lambda_j$.
\end{Cor}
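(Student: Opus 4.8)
The plan is to extract all three assertions from the stationarity identity \eqref{doubprob} of Proposition \ref{equiva}, using Lemma \ref{lele} to set $\theta \equiv 1$, and invoking the convexity of the $E_i$ only in the final claim. Throughout I identify the left-hand side of \eqref{doubprob} with the first variation $[\delta V_\E](g) = \int \langle T_x\Gamma_\E, Dg\rangle\, d\|V_\E\|$, so that after inserting $\theta \equiv 1$ the identity reads $[\delta V_\E](g) = \sum_{i} \lambda_i \int_{\partial^* E_i}(n_{E_i},g)\,d\mathcal{H}^{n-1}$.

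For the boundedness of the mean curvature, I would define for $\mathcal{H}^{n-1}$-a.e. $x \in \Gamma_\E$
\[
H(x) := -\sum_{i\,:\,x \in \partial^* E_i}\lambda_i\, n_{E_i}(x).
\]
By \eqref{intbound}, a.e. $x$ lies on at most two reduced boundaries, so this is a sum of at most two unit vectors and $|H(x)| \le \sum_{i=1}^k|\lambda_i| < \infty$, i.e. $H \in L^\infty(\Gamma_\E,\R^n)$. Rearranging the sum over points and indices, $\int_{\Gamma_\E}\sum_{i:x\in\partial^* E_i}(\cdots) = \sum_i\int_{\partial^* E_i}(\cdots)$, shows that $-\int_{\Gamma_\E}(H,g)\,d\|V_\E\|$ coincides with the right-hand side of \eqref{doubprob}, so comparison with \eqref{boundedmean} gives bounded mean curvature $H$.

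For the constant mean curvature of the free faces I localize. Set $U_i := \R^n\setminus\bigcup_{j\neq i}\overline{E_j}$, an open set in which, the $E_j$ being open and disjoint, the only phases present are $E_i$ and the exterior; hence $V_\E\llcorner U_i = \llbracket\partial E_i\setminus\bigcup_{j\neq i}\partial E_j\rrbracket\llcorner U_i$ while $\partial^* E_j\cap U_i = \emptyset$ for $j\neq i$. Testing \eqref{doubprob} with $g\in C^1_c(U_i,\R^n)$ annihilates every term on the right except $\lambda_i\int_{\partial^* E_i}(n_{E_i},g)\,d\mathcal{H}^{n-1}$, which exhibits the free face as a varifold of constant mean curvature vector $-\lambda_i n_{E_i}$, i.e. constant scalar mean curvature $\lambda_i$.

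The delicate point, and the only place convexity is used, is the equality $\lambda_i = \lambda_j$ on a shared interface. Since $E_i, E_j$ are disjoint convex open sets, the separating hyperplane theorem yields a hyperplane $\pi$ with $\overline{E_i},\overline{E_j}$ in opposite closed half-spaces, so $\partial E_i\cap\partial E_j\subseteq\overline{E_i}\cap\overline{E_j}\subseteq\pi$: the interface is flat. The contact faces $F_i := \overline{E_i}\cap\pi$ and $F_j := \overline{E_j}\cap\pi$ are convex, and $\partial E_i\cap\partial E_j = F_i\cap F_j$ has positive $\mathcal{H}^{n-1}$-measure, hence is $(n-1)$-dimensional with nonempty relative interior. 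Choosing $x_0\in\mathrm{relint}(F_i\cap F_j) = \mathrm{relint}(F_i)\cap\mathrm{relint}(F_j)$, convexity forces $\partial E_i$ and $\partial E_j$ to coincide with $\pi$ on a whole ball $B_r(x_0)$, with $E_i,E_j$ filling the two open half-balls; thus $B_r(x_0)\cap\Gamma_\E = \pi\cap B_r(x_0)$ and no other phase intrudes. Testing \eqref{doubprob} with $g\in C^1_c(B_r(x_0),\R^n)$, the left-hand side equals $\int_{\pi\cap B_r}\dv_\pi g\,d\mathcal{H}^{n-1} = 0$ by the tangential divergence theorem on the flat piece, whereas the right-hand side collapses, using $n_{E_j} = -n_{E_i} = -\nu$ on $\pi$, to $(\lambda_i-\lambda_j)\int_{\pi\cap B_r}(\nu,g)\,d\mathcal{H}^{n-1}$; taking $g = \phi\,\nu$ with $\phi\ge 0$ a bump function supported in $B_r(x_0)$ forces $\lambda_i = \lambda_j$. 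I expect this last step to be the main obstacle: the flatness of the interface is immediate, but one must carry out the localization cleanly, the crux being the convex-analytic fact that the relative interior of the contact region admits a neighborhood in which both convex boundaries are genuinely flat, so that the flat piece is stationary and the two multipliers can be compared.
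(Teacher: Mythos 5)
Your proposal is correct and follows essentially the same route as the paper: the paper dispatches the first two claims as immediate consequences of \eqref{boundedmean} and \eqref{doubprob} (your explicit $H$ and localization to $U_i$ are exactly the intended details), and for $\lambda_i=\lambda_j$ the paper argues identically --- the interface lies in a separating hyperplane $\pi$, one picks $x_0$ in its relative interior so that convexity gives a ball in which $\spt(\|V_\E\|)$ is exactly the flat disk $\pi\cap B_\delta(x_0)$, and then the divergence theorem plus $n_{E_j}=-n_{E_i}$ reduces \eqref{doubprob} to $(\lambda_i-\lambda_j)\int_\pi(n_{E_i},g)\,d\mathcal{H}^{n-1}=0$ for arbitrary $g$.
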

\begin{proof}
The fact that $V_\E$ has bounded mean curvature and that $\llbracket\partial E_i\setminus \bigcup_{j= 1, j \neq i}^k \partial E_j\rrbracket$ are of constant mean curvature $\lambda_i$ for all $i$ easily follows from \eqref{boundedmean} and \eqref{doubprob}. To show the last statement, assume $\mathcal{H}^{n-1}(\partial E_i \cap \partial E_j) \neq 0$. As $E_i$ and $E_j$ are convex, $\partial E_i \cap \partial E_j$ is an $(n-1)$-dimensional convex subset of an $(n-1)$-dimensional plane $\pi$. Take a point $x_0 \in \intt_{\pi}(\partial E_i \cap \partial E_j)$. Then, convexity easily yields the existence of a small $\delta > 0$ such that
\begin{equation}\label{onlyij}
B_\delta(x_0) \cap \spt(\|V_{\E}\|) = \partial^*E_i \cap \partial^*E_j \cap B_\delta(x_0) \subset \pi.
\end{equation}
Taking test vector fields $g$ with $\spt(g) \subset B_\delta(x_0)$, by \eqref{intbound} and the divergence theorem we see that \eqref{doubprob} reads
\[
0 = \lambda_i\int_{\partial^*E_i}(n_{E_i},g) d\mathcal{H}^{n-1} + \lambda_j\int_{\partial^*E_j}(n_{E_j},g) d\mathcal{H}^{n-1} = (\lambda_i - \lambda_j)\int_{\pi}(n_{E_i},g) d\mathcal{H}^{n-1},
\]
the last equality being true since $n_{E_j} = -n_{E_i}$ on $\pi$. The arbitrarity of $g$ yields $\lambda_i = \lambda_j$.
\end{proof}
Moreover, Corollary \ref{corstat} together with \cite[Chapter 4]{SIM} implies the following:
\begin{Cor}\label{blowcone}
Let $V_{\E}$ be stationary for the $k$-bubble problem. Then, at all $x \in \spt(V_\E)$, there exists at least one blow-up $W$ of $V_\E$. $W$ is a cone with vertex at $x$, i.e. $(\eta_{x,r})^{\#}W = W, \forall r > 0$, and $W$ is stationary.
\end{Cor}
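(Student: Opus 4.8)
The plan is to derive the statement entirely from the bounded mean curvature of $V_\E$ obtained in Corollary \ref{corstat}, combined with the monotonicity formula for varifolds with bounded generalized mean curvature \cite[Chapter 4]{SIM}. Fix $x \in \spt(V_\E)$. By Corollary \ref{corstat} there is $H \in L^\infty(\Gamma_\E,\R^n;\mathcal H^{n-1}\llcorner\Gamma_\E)$ representing the first variation as in \eqref{boundedmean}. The monotonicity formula then guarantees that $\rho \mapsto e^{C\rho}\,\omega_{n-1}^{-1}\rho^{-(n-1)}\|V_\E\|(B_\rho(x))$ is monotone non-decreasing for some $C = C(\|H\|_\infty)$, so that the density $\theta^{n-1}(x,\|V_\E\|)$ exists and the mass ratios $\rho^{-(n-1)}\|V_\E\|(B_\rho(x))$ stay bounded as $\rho \to 0$.

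Writing $\eta_{x,r}(y)=(y-x)/r$, the rescalings $(V_\E)_{x,r}=\eta^\#_{x,r}(V_\E)$ satisfy $\|(V_\E)_{x,r}\|(B_\rho(0)) = r^{-(n-1)}\|V_\E\|(B_{\rho r}(x))$, hence have locally uniformly bounded mass on $\R^n\times\G$. By weak-$*$ compactness of Radon measures with locally bounded mass, there is a sequence $r_j \downarrow 0$ and a varifold $W$ with $(V_\E)_{x,r_j}\weak W$; this is the sought blow-up, proving the first assertion.

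For stationarity, I would note that $(V_\E)_{x,r}$ has generalized mean curvature $H_{x,r}(y)=r\,H(x+ry)$, so $\|H_{x,r}\|_\infty = r\|H\|_\infty \to 0$. Rewriting the first variation through the full varifold, $[\delta (V_\E)_{x,r}](g) = \int \langle S,Dg(y)\rangle\, d(V_\E)_{x,r}(y,S) = -\int (H_{x,r},g)\,d\|(V_\E)_{x,r}\|$, I then pass to the limit along $r_j$: the left-hand side converges because $(y,S)\mapsto\langle S,Dg(y)\rangle$ lies in $C_c(\R^n\times\G)$ for $g \in C^1_c$, while the right-hand side tends to $0$. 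Thus $[\delta W](g)=0$ for all $g$, i.e. $W$ is stationary.

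Finally, the cone property follows from the equality case of monotonicity for the stationary $W$. From the mass-ratio identity and the existence of the density, one gets $\omega_{n-1}^{-1}\rho^{-(n-1)}\|W\|(B_\rho(0)) = \theta^{n-1}(x,\|V_\E\|)$ for every $\rho>0$, i.e. the density ratio of $W$ is constant; for a stationary varifold this forces the excess term $\int |y^\perp|^2|y|^{-(n+1)}\,d\|W\|$ in the monotonicity identity to vanish, so $y^\perp = 0$ for $\|W\|$-a.e. $y$, which is precisely the statement that $W$ is a cone with vertex at the origin. Translating back, $W$ is a cone with vertex $x$ and $(\eta_{x,r})^\#W = W$ for every $r>0$. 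The main obstacle I expect is the passage to the limit in the first variation—one must exploit weak-$*$ convergence on the product space $\R^n\times\G$, not merely of the spatial measures $\|\cdot\|$, in order to capture the term $\int\langle S,Dg\rangle$—together with the equality-case analysis of the monotonicity formula, which is the technical content we import from \cite[Chapter 4]{SIM}.
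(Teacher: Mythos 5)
Your proposal is correct and follows essentially the same route as the paper: the paper derives this corollary directly from Corollary \ref{corstat} (bounded mean curvature) together with the monotonicity-formula theory cited from \cite[Chapter 4]{SIM}, which is exactly the argument you have unpacked (mass-ratio bounds and compactness for existence, vanishing of the rescaled curvature $r\|H\|_\infty$ for stationarity of the limit, and the equality case of monotonicity for the cone property). The paper gives no further details beyond the citation, so your write-up is simply the explicit version of its proof.
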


It is well known, see for instance \cite[Corollary 1.2.2.3]{GRIS}, that
\begin{prop}\label{graph}
Let $C\subset \R^d$ be a bounded convex set of dimension $d$. Then, $C$ is a Lipschitz domain, and in particular for all points $x \in \partial C$, there exist $\delta = \delta(x) > 0$, a supporting hyperplane $\pi$ to $\partial C$ with normal $\nu$ and a convex function $\varphi : B^{\pi}_{\delta}(x) \to \R$ such that:
\[
\partial C \cap B_{\delta, \Lip(\varphi)\delta}^\pi(x) = \{y + \varphi(y)\nu: y \in B_\delta^\pi(x)\} 
\]
and
\[
 C \cap  B_{\delta,\Lip(\varphi)\delta}^\pi(x) = \{y + a\nu \in B_{\delta,\Lip(\varphi)\delta}^\pi(x): a > \varphi(y), y \in B_\delta^\pi(x) \}.
\]
\end{prop}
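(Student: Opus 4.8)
The plan is to reduce the statement to the local regularity of a single convex function obtained by writing $\partial C$ as a graph over a carefully chosen supporting hyperplane. Since $C$ has dimension $d$ it has nonempty interior; fix $x_0\in\intt(C)$ and $\rho>0$ with $B_\rho(x_0)\subset C$, and fix $x\in\partial C$. The first and, in my view, only genuinely non-routine point is the choice of the supporting hyperplane: I claim there is a unit inner normal $\nu$ of a supporting hyperplane at $x$ that points strictly into $C$, i.e. $x+a\nu\in\intt(C)$ for all small $a>0$. Writing $K:=\Tan(C,x)$ for the tangent cone, which is a closed convex cone with nonempty interior and $K\neq\R^d$ (the latter because $x\in\partial C$ admits a supporting hyperplane), the inner normal cone at $x$ is exactly the dual cone $K^\ast=\{\nu:(\nu,v)\ge0\ \forall v\in K\}$, while the directions pointing strictly into $C$ form $\intt(K)$. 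Thus the claim is the purely convex-analytic statement $K^\ast\cap\intt(K)\neq\emptyset$. This follows by contradiction: if the disjoint convex sets $K^\ast$ and $\intt(K)$ were separated by a hyperplane, then, since both are cones (so the separating constant can be taken to be $0$), one finds $a\neq0$ with $a\in K^\ast$ and $-a\in K$, whence $0\le(a,-a)=-|a|^2$, a contradiction. Normalizing, we obtain the desired $\nu$, and since $K^\ast,K$ are cones the normalization keeps $\nu\in K^\ast\cap\intt(K)$.

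Fix such a $\nu$ and set $\pi:=x+\nu^\perp$, a supporting hyperplane with $C\subseteq\{z:(z-x,\nu)\ge0\}$. Define, for $y\in\pi$,
\[
\varphi(y):=\inf\{a\in\R:y+a\nu\in C\}.
\]
Reading $C$ in the orthogonal coordinates $(\pi,\nu)$ exhibits it as a convex set, so $\varphi$ is the lower boundary of a convex set and is therefore convex; moreover $\varphi\ge0$ and $\varphi(x)=0$ because $\pi$ is supporting and $x+a\nu\in\intt(C)$ for small $a>0$. Finiteness on a neighborhood of $x$ comes precisely from the good choice of $\nu$: picking $a_0>0$ with $x+a_0\nu\in\intt(C)$ and $r>0$ with $B_r(x+a_0\nu)\subset C$, every line $y+\R\nu$ with $y\in B^\pi_r(x)$ meets this ball, so $\varphi(y)\le a_0+r<\infty$. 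A finite convex function on an open ball is locally Lipschitz, so after shrinking we get $\delta>0$ and $L:=\Lip(\varphi)$ with $\varphi$ $L$-Lipschitz on $B^\pi_\delta(x)$; in particular $0\le\varphi(y)\le L|y-x|<L\delta$ there.

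It remains to verify the two cylinder identities, for which the only thing to arrange is that the thin cylinder does not reach the far side of $C$. For $y\in B^\pi_\delta(x)$ the chord $\{a:y+a\nu\in C\}$ is an open interval $(\varphi(y),\psi(y))$, where $\psi$, the upper boundary, is concave; since $\psi(x)>0$ and $\psi$ is continuous, after a further shrinking of $\delta$ we may assume $L\delta<\min_{B^\pi_\delta(x)}\psi$, so that the slab $\{|a|<L\delta\}$ meets $\partial C$ along each vertical line only at its lower endpoint $a=\varphi(y)\in[0,L\delta)$. This immediately gives $\partial C\cap B^\pi_{\delta,L\delta}(x)=\{y+\varphi(y)\nu:y\in B^\pi_\delta(x)\}$, and since inside the cylinder the membership $y+a\nu\in C$ is equivalent to $a>\varphi(y)$ (the upper constraint $a<\psi(y)$ being automatic there), we also obtain $C\cap B^\pi_{\delta,L\delta}(x)=\{y+a\nu\in B^\pi_{\delta,L\delta}(x):a>\varphi(y),\ y\in B^\pi_\delta(x)\}$, the claimed epigraph representation. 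The main obstacle is exactly the content of the first paragraph: all the metric information (finiteness of $\varphi$ and the Lipschitz graph) hinges on selecting a supporting hyperplane whose normal enters $C$, and it is the duality lemma $K^\ast\cap\intt(K)\neq\emptyset$ that guarantees such a choice exists even at corners, where a poorly chosen supporting hyperplane would force $\varphi$ to be infinite on part of every neighborhood of $x$.
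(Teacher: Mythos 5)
Your proof is correct, but note that the paper itself does not prove Proposition \ref{graph}: it is stated as well known, with a citation to \cite[Corollary 1.2.2.3]{GRIS}, so there is no internal argument to compare against. Your write-up is a legitimate self-contained derivation, and it is worth stressing that it proves exactly the statement as phrased, which is a bit more than the bare assertion that bounded convex bodies are Lipschitz domains: the graph is taken over a \emph{supporting} hyperplane, so that $\varphi$ is convex with $\varphi\ge 0$ and $\varphi(x)=0$ --- precisely the form the paper exploits later (e.g.\ in \eqref{phizer} and in Proposition \ref{uniquecone}). The usual textbook proof of Grisvard's corollary writes $\partial C$ locally as a Lipschitz graph along the direction from $x$ to the center of an inscribed ball, and that axis is in general not normal to a supporting hyperplane; your duality lemma $K^{\ast}\cap\intt(K)\neq\emptyset$ is exactly the ingredient that upgrades the choice of axis to an inner normal of a supporting hyperplane which still enters $\intt(C)$, and this is what makes $\varphi$ finite near $x$, hence locally Lipschitz. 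The two places where you are terse are both standard and correct as sketched: for the separation step, after normalizing the separating constant to $0$ (possible since both sets are cones and $0\in K^{\ast}$), one can even avoid the bipolar theorem, since $(a,w)\ge 0$ for all $w\in\intt(K)$ already gives $a\in K^{\ast}$ by density of $\intt(K)$ in $K$, and testing the other inequality with $u=a$ yields $|a|^{2}\le 0$; and for the cylinder identities, the upper envelope $\psi$ is concave, hence continuous near $x$ with $\psi(x)>0$, and shrinking $\delta$ does not increase $\Lip(\varphi)$, so the requirement $\Lip(\varphi)\,\delta<\min_{B^{\pi}_{\delta}(x)}\psi$ can indeed be arranged without circularity. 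Finally, the global claim that $C$ is a Lipschitz domain follows from your local charts by compactness of $\partial C$; it would be worth adding that one sentence.
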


\begin{prop}\label{uniquecone}
Let $C\subset \R^d$ be a bounded convex set of dimension $d$, and let $V =\llbracket \partial C\rrbracket$. Then, the blow-up $\llbracket K \rrbracket$ at every $x \in \partial C$ of $V$ is unique, and it is a cone that bounds a convex set. In particular, with the notation of Proposition \ref{graph} at $x$, then the blow-up of $\llbracket K \rrbracket$ at $x$ is the graph over $\pi':=\pi-x$ of the convex and positively one-homogeneous function 
\begin{equation}\label{homogeneous}
H(w) := \lim_{r \to 0^+}\frac{\varphi(x +rw) - \varphi(x)}{r}, \quad \forall w \in \pi'.
\end{equation}
The convergence in the previous limit is uniform and in $W^{1,p}$ for all $p <+ \infty$ on every fixed ball $B^{\pi'}_s \subset \pi'$. Finally, $C$ lies in the open, convex set $\Epi_{\pi'}(H)+x$.
\end{prop}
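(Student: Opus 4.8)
The plan is to reduce everything to the local graph representation of Proposition \ref{graph} and to exploit the monotonicity of secant slopes of convex functions. After translating so that $x = 0$, so that $\pi' = \pi$, Proposition \ref{graph} gives a convex $\varphi : B^\pi_\delta(0) \to \R$ with $\varphi(0) = 0$ and $\varphi \geq 0$ (the latter because $\pi$ is a supporting hyperplane, so $C$ lies in $\{a > \varphi\} \subseteq \{a \geq 0\}$), with $\partial C$ coinciding near $0$ with the graph $\Gamma^\pi_\varphi$. Under the dilation $\eta^{0,r}$ this graph becomes the graph of $\varphi_r(w) := \varphi(rw)/r$ over $\pi$. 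For a fixed direction $w$, the map $r \mapsto \varphi(rw)/r$ is the secant slope from the origin of the convex function $t \mapsto \varphi(tw)$, hence nondecreasing in $r > 0$ and bounded by $\Lip(\varphi)$; therefore $\varphi_r(w)$ decreases to the finite limit $H(w)$ of \eqref{homogeneous} as $r \downarrow 0$. This already yields that the full limit exists, so the blow-up is unique rather than merely subsequential. Since $H$ is the one-sided directional derivative of $\varphi$ at $0$, it is sublinear, hence convex and positively $1$-homogeneous, and the monotonicity gives the pointwise bound $\varphi(rw)/r \geq H(w)$, i.e. $\varphi \geq H$ on the domain.

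Next I would upgrade the pointwise monotone convergence $\varphi_r \downarrow H$ to the convergence modes claimed. Since all $\varphi_r$ and $H$ are convex and the limit is finite, pointwise convergence forces local uniform convergence on every $B^{\pi'}_s$; the gradients $\nabla\varphi_r$ are uniformly bounded by $\Lip(\varphi)$ and converge a.e. to $\nabla H$ (at the full-measure set of differentiability points of $H$), so dominated convergence gives $\nabla\varphi_r \to \nabla H$ in $L^p(B^{\pi'}_s)$ for every $p < \infty$. Together these furnish $\varphi_r \to H$ uniformly and in $W^{1,p}$. The varifold convergence $V_{0,r} \weak \llbracket \Gamma^\pi_H\rrbracket$ then follows: for $r$ small the cylinder of Proposition \ref{graph} dilates to contain any fixed ball, so there $V_{0,r}$ equals the graph varifold of $\varphi_r$, and writing this through the area formula with the continuous maps $\A$ of \eqref{maps} and $h$ of \eqref{h}, the uniform convergence of $\varphi_r$ together with the $L^1$ and a.e. convergence of $\nabla\varphi_r$ lets dominated convergence pass to the limit. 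Hence the unique blow-up is $\llbracket K\rrbracket$ with $K = \Gamma^\pi_H$. As $H$ is $1$-homogeneous, $\eta^{0,s}$ leaves $\Gamma^\pi_H$ invariant, so $K$ is a cone; as $H$ is convex, $\Epi_\pi(H)$ is open and convex with $K = \partial\Epi_\pi(H)$, so $K$ bounds a convex set.

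Finally, for the global inclusion $C \subseteq \Epi_{\pi'}(H) + x$ I would argue directly by convexity. Fix $c \in C$ and write, in the translated picture, $c = u + s\nu$ with $u \in \pi$ and $s \in \R$. Since $C$ is convex and $0 \in \partial C$, the segment $tc$ lies in $C$ for all $t \in (0,1]$; for $t$ small enough $tc$ lies in the cylindrical neighborhood of Proposition \ref{graph}, where $C = \{a > \varphi\}$, so $ts > \varphi(tu)$, i.e. $s > \varphi(tu)/t \geq H(u)$ by the monotonicity bound above. Thus $s > H(u)$, which is exactly $c \in \Epi_\pi(H)$; undoing the translation gives $C \subseteq \Epi_{\pi'}(H) + x$, and openness of the strict epigraph of the continuous convex function $H$ identifies this with an open convex set. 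The step I expect to be the most delicate is the varifold-level reduction, namely verifying that for small $r$ the dilated varifold $V_{0,r}$ coincides with the graph varifold of $\varphi_r$ on a fixed ball (so that the far-away part of $\partial C$ contributes no mass) and that the area-formula representation passes to the limit; the convex-analytic inputs, that is the existence, convexity and homogeneity of $H$ and the bound $\varphi \geq H$, are comparatively routine once the monotonicity of secant slopes is in place.
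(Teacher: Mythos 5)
Your proof is correct, and it reaches the conclusion by a genuinely different mechanism than the paper in the key compactness step. Both arguments share the same skeleton (reduce to the graph of $\varphi$ via Proposition \ref{graph}, identify the pointwise limit $H$, upgrade to uniform and $W^{1,p}$ convergence of $\varphi_r$, then pass to the varifold limit), and both get uniqueness of the blow-up from the existence of the pointwise limit of difference quotients. But where you exploit the monotone structure --- secant slopes of a convex function decrease as $r \downarrow 0$, pointwise convergence of convex functions implies locally uniform convergence, and gradients of convex functions converge at every differentiability point of the limit (a classical fact, e.g.\ Rockafellar's Theorem 24.5, which you should cite since it plays the role of the paper's key lemma), followed by dominated convergence --- the paper instead proves the quantitative estimate of Lemma \ref{lemmainter}, namely $\|D^2 f\|(B_1) \le C\|f\|_{L^\infty(B_2)}$ for convex $f$, deduces that the gradients $D\varphi_r$ are equibounded in $BV$, and combines the compact embedding $BV \hookrightarrow L^p$ for $p< d/(d-1)$ with the uniform $L^\infty$ bound and interpolation to get strong $L^p$ convergence of the gradients for all $p<\infty$; it then cites \cite[Theorem 5.2]{AR} to convert strong convergence of graphs into varifold convergence, whereas you carry out the area-formula/dominated-convergence computation by hand (which is essentially the computation the paper performs anyway in the proof of Corollary \ref{corstruct}). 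Your route is more elementary and self-contained on the convex-analysis side, avoiding both Lemma \ref{lemmainter} and the external citation; the paper's $BV$-compactness route is more robust, since it does not rely on monotonicity of the rescalings or on the pointwise gradient-convergence theorem for convex functions, and the quantitative Hessian bound is of independent use. Finally, you explicitly verify the cone and convexity properties of $K$ and the inclusion $C \subseteq \Epi_{\pi'}(H)+x$, which the paper's proof leaves implicit; your arguments for these are correct.
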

\begin{proof}
The limit in \eqref{homogeneous} is well defined because the left and right derivatives of a $1$-variable convex function always exist. We just need to show that the limit is uniform and in $W^{1,p}$ for all $p < + \infty$ on every fixed ball $B^{\pi'}_s \subset \pi$. It is enough to show that the sequence $H_r:=\frac{\varphi(x +r \cdot) - \varphi(x)}{r}$ satisfies the property that $DH_r$ is equibounded in $BV$. Indeed $BV$ compactly embeds in $L^p$ for every $p < d/(d-1)$ and, since $DH_r$ are equibounded in $L^\infty$, we would deduce the strong convergence in $L^p$ for all $p<+\infty$ by standard interpolation. The fact that $DH_r$ is equibounded in $BV$ follows from the following Lemma \ref{lemmainter}, observing that $H_r$ is convex. The strong convergence of the gradients, together with \cite[Theorem 5.2]{AR}, implies that
$
\llbracket K_{x,r}\rrbracket \weak \llbracket \Gamma_{H,\pi'}\rrbracket
$
in the sense of varifolds.
\end{proof}

\begin{lemma}\label{lemmainter}
There exists a constant $C(d)>0$ such that, for every convex function $f:B_2\subset \R^d \to \R$, it holds
$$\|D^2f\|(B_1)\leq C\|f\|_{L^\infty(B_2)},$$
where $\|D^2f\|$ denotes the total variation of the measure $D^2f$.
\end{lemma}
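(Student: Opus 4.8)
The plan is to control $\|D^2f\|$ by the trace measure $\tr(D^2f)=\Delta f$, which for a convex function is a \emph{positive} scalar measure, and then to bound the mass of $\Delta f$ on $B_1$ by testing against a cutoff and integrating by parts. First I recall that a convex function $f$ on $B_2$ is locally Lipschitz and its distributional Hessian $D^2f=(\mu_{ij})$ is a symmetric matrix-valued Radon measure. For every fixed direction $\xi\in\R^d$, the scalar measure $\sum_{ij}\xi_i\xi_j\mu_{ij}$ is the distributional second derivative of $f$ along $\xi$, and since the restriction of $f$ to any line is convex, this measure is nonnegative. Hence $\langle D^2f(A)\,\xi,\xi\rangle\geq 0$ for every Borel set $A$ and every $\xi$, i.e. $D^2f(A)$ is positive semidefinite for all $A$.

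The key pointwise inequality is then that for a positive semidefinite symmetric matrix $M$ one has $|M|\leq \tr(M)$, where $|\cdot|$ is the Frobenius norm (indeed $|M|=(\sum_i\lambda_i^2)^{1/2}\leq \sum_i\lambda_i=\tr(M)$ for nonnegative eigenvalues $\lambda_i$). Writing the polar decomposition $D^2f=M\,\|D^2f\|$ with $|M|=1$ holding $\|D^2f\|$-a.e., the above shows $M$ is positive semidefinite $\|D^2f\|$-a.e., so $\tr(M)\geq |M|=1$ $\|D^2f\|$-a.e. Consequently, as measures,
\[
\Delta f=\tr(D^2f)=\tr(M)\,\|D^2f\|\geq \|D^2f\|,
\]
so in particular $\|D^2f\|(B_1)\leq \Delta f(B_1)$ with no dimensional constant lost at this stage. (If one prefers to avoid the polar-decomposition argument, the same estimate can be obtained by mollifying $f$ on a slightly smaller ball, proving the bound for smooth convex functions where $D^2f$ is a genuine positive semidefinite matrix field, and passing to the limit using lower semicontinuity of the total variation.)

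It remains to bound $\Delta f(B_1)$ by $\|f\|_{L^\infty(B_2)}$. Fix a cutoff $\chi\in C_c^\infty(B_2)$ with $0\leq\chi\leq 1$ and $\chi\equiv 1$ on $B_1$; then $\|\Delta\chi\|_{L^1(B_2)}\leq C(d)$ depends only on $d$. Since $\Delta f\geq 0$ and $\chi\geq \mathbf{1}_{B_1}$, the definition of the distributional Laplacian gives
\[
\Delta f(B_1)\leq \int_{B_2}\chi\,d(\Delta f)=\int_{B_2} f\,\Delta\chi\,dx\leq \|f\|_{L^\infty(B_2)}\int_{B_2}|\Delta\chi|\,dx\leq C(d)\,\|f\|_{L^\infty(B_2)}.
\]
Combining the two estimates yields $\|D^2f\|(B_1)\leq C(d)\|f\|_{L^\infty(B_2)}$, as claimed. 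I expect the only genuinely delicate point to be the passage from the semidefiniteness of $D^2f$ on Borel sets to the pointwise semidefiniteness of the polar density $M$ (or, equivalently, the justification of the limit in the mollification argument); everything else is a routine integration by parts against a fixed dimensional cutoff.
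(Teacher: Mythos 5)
Your proof is correct, and while its first half coincides with the paper's, the second half takes a genuinely different route. Both arguments hinge on the same key observation: convexity makes $D^2f$ positive semidefinite, so the Frobenius norm is dominated by the trace, giving $\|D^2f\|\le C\,\Delta f$ as measures (the paper states $|D^2f|\le C(d)\Delta f$ after mollifying to the smooth case; you work directly at the level of measures via the polar decomposition, which is fine, and your mollification fallback is exactly the paper's reduction). Where you diverge is in bounding the mass of $\Delta f$ on $B_1$. The paper applies the divergence theorem once, producing the boundary term $\int_{\partial B_1}(Df,n_{B_1})\,d\mathcal{H}^{d-1}$, and then invokes the standard fact that a convex function's Lipschitz constant on $B_1$ is controlled by its sup norm on the larger ball $B_2$ — so convexity is used a second time, through a gradient estimate. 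You instead test the nonnegative measure $\Delta f$ against a fixed cutoff $\chi\in C^\infty_c(B_2)$ with $\chi\equiv 1$ on $B_1$ and move \emph{both} derivatives onto $\chi$, getting $\Delta f(B_1)\le\int f\,\Delta\chi\le C(d)\|f\|_{L^\infty(B_2)}$. This uses convexity only once (for the positivity that justifies $\Delta f(B_1)\le\int\chi\,d\Delta f$ and the PSD trace bound) and needs no Lipschitz estimate at all; the price is the slightly more careful measure-theoretic bookkeeping in the polar-decomposition step, which you correctly flag and correctly resolve (directional second derivatives $\partial^2_{\xi\xi}f\ge 0$ for a countable dense set of directions give semidefiniteness of the polar density $\|D^2f\|$-a.e.). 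Either argument is complete; yours is marginally more self-contained, the paper's is shorter granted the classical convex-function gradient bound.
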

\begin{proof}
By standard mollification, it is enough to assume $f \in C^\infty$.
We compute
\begin{equation}\label{primo}
\int_{B_1}\Delta f=\int_{\partial B_1}( Df,n_{B_1} ) d\mathcal{H}^{n-1}\leq \|f\|_{\Lip(B_1)}\Haus^{d-1}(\partial B_1)\leq C\|f\|_{L^\infty(B_2)},
\end{equation}
where the last inequality holds because $f$ is convex.
Since $D^2f$ is valued in the set of positive semidefinite matrices, then the pointwise Frobenius norm $|D^2f|$ of  $D^2f$ is controlled as follows:
$$|D^2f|\leq C(d)\Delta  f,$$
which, combined with \eqref{primo}, concludes the claim.
\end{proof}


\begin{Cor}\label{regularconv}
Let $C\subset \R^d$ be a bounded convex set of dimension $d$, and let $V =\llbracket \partial C\rrbracket$. Then, $\partial C$ is a $C^1$ domain if and only if the blow-up at every $x \in \partial C$ is contained in a $(d-1)$-dimensional linear subspace of $\R^d$. In this case, the function $\varphi$ of Proposition \ref{graph} can be chosen to be $C^1$.
\end{Cor}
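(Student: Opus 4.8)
The plan is to read off both implications directly from the explicit description of the blow-up provided by Proposition \ref{uniquecone}, combined with elementary convex analysis. Fix $x \in \partial C$, let $\pi$, $\nu$, $\varphi$ be as in Proposition \ref{graph}, and set $\pi' := \pi - x$. By Proposition \ref{uniquecone} the unique blow-up of $V$ at $x$ is $\llbracket \Gamma_H^{\pi'}\rrbracket$, the graph over $\pi'$ of the convex, positively one-homogeneous function $H$ of \eqref{homogeneous}. The starting point of the argument is the identification of $H$ with a directional derivative, namely
\[
H(w) = \lim_{r\to 0^+}\frac{\varphi(x+rw)-\varphi(x)}{r} = \max_{p \in \partial\varphi(x)}(p,w),
\]
where $\partial\varphi(x)$ is the subdifferential of $\varphi$ at $x$; in other words $H$ is the support function of $\partial\varphi(x)$, the last equality being the standard formula for the one-sided directional derivative of a convex function.

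Next I would record the key equivalence: the graph $\Gamma_H^{\pi'}$ is contained in a $(d-1)$-dimensional linear subspace of $\R^d$ if and only if $H$ is linear, if and only if $\partial\varphi(x)$ is a singleton, if and only if $\varphi$ is differentiable at $x$. For the first equivalence, suppose $\Gamma_H^{\pi'}\subset W$ with $\dime W = d-1$. Since $p_{\pi'}$ maps $\Gamma_H^{\pi'}$ bijectively onto $\pi'$ and $\Gamma_H^{\pi'}\subset W$, the restriction $p_{\pi'}|_W : W \to \pi'$ is surjective between $(d-1)$-dimensional spaces, hence an isomorphism; thus $\nu \notin W$ and $W$ is itself the graph over $\pi'$ of a linear map $\ell$, which forces $H=\ell$. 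The reverse direction is immediate, and the remaining equivalences follow at once from the support-function formula above, since a support function is linear exactly when the underlying convex set is a point.

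For the direct implication, if $\partial C$ is a $C^1$ domain then near $x$ it is a $C^1$ graph; as $\pi$ is a supporting hyperplane at $x$ and $\partial C$ is smooth, $\pi$ coincides with the tangent hyperplane $T_x\partial C$, so $\varphi$ is $C^1$ near $x$ and in particular differentiable at $x$, whence by the equivalence the blow-up is contained in a hyperplane. For the converse, assume the blow-up is contained in a $(d-1)$-dimensional subspace at every boundary point. Fixing the representation $\varphi$ on $B^\pi_\delta(x)$, every point of $\partial C \cap B^\pi_{\delta,\Lip(\varphi)\delta}(x)$ has the form $y+\varphi(y)\nu$, and applying the equivalence at each such point translates the hypothesis into differentiability of $\varphi$ at every $y \in B^\pi_\delta(x)$. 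A convex function that is differentiable at every point of an open set is automatically $C^1$ there, its gradient being continuous wherever single-valued (a classical fact in convex analysis, e.g. Rockafellar, \emph{Convex Analysis}, Theorem~25.5). Hence $\varphi$ may be taken $C^1$, which simultaneously shows that $\partial C$ is a $C^1$ domain and establishes the final assertion of the statement.

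The only ingredient that is not purely formal is the passage from pointwise differentiability of the convex function $\varphi$ to $\varphi \in C^1$; this is where I expect the (mild) substance to lie, the rest of the proof being bookkeeping with the support-function description of the blow-up $H$ and the elementary linear-algebra argument identifying graphs contained in a hyperplane with linear functions.
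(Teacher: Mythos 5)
Your proposal is correct and follows essentially the same route as the paper: both implications rest on the blow-up description of Proposition \ref{uniquecone}, and both conclude via the classical fact that a convex function differentiable at every point of an open set is $C^1$ there. The only (minor) divergence is the intermediate step for the reverse implication: where you identify $H$ with the support function of the subdifferential $\partial\varphi(x)$ and use a linear-algebra argument to show that a graph contained in a hyperplane must have linear graph function, the paper argues set-theoretically that the boundary of a nonempty open convex set contained in a hyperplane must equal that hyperplane --- two elementary verifications of the same fact, namely that containment of the blow-up in a hyperplane forces differentiability of the local convex parametrization at that point.
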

\begin{proof}
If $\partial C$ is a $C^1$ domain, then trivially the blow-up at every $x \in \partial C$ is contained in a $(d-1)$-dimensional linear subspace of $\R^d$. For the reverse implication, assume that at every $x$ the blow-up $W_x$ of $\partial C$ is contained in a hyperplane $\pi$. By Proposition \ref{uniquecone}, $W_x$  is a cone such that $W_x=\llbracket \partial U\rrbracket$, where $U \subset \R^d$ is a convex non-empty open set. Assume by contradiction that $\partial U \neq  \pi $, then there exists $y \in \pi \setminus \partial U$. Moreover there exists $z\in U\setminus \pi$. Hence, the segment joining $y$ with $z$ intersects $\partial U$ at least once, contradicting the fact that $\partial U\subset \pi$. Whence $W_x = \llbracket\pi \rrbracket$. The conclusion follows from the fact that a convex function that is differentiable at every point of an open set must be $C^1$ in the open set.
\end{proof}

\begin{Cor}\label{corstruct}
Let $E_1$ and $E_2$ be convex sets of $\R^n$ separated by the hyperplane $\pi$. Assume that $\overline{E_1}\cap \pi = \overline{E_2}\cap \pi$  is an $(n-1)$-dimensional convex set.  Let $L$ be any supporting $(n-2)$-dimensional plane $L \subset \pi$ to $\partial_\pi(\overline{E_1}\cap \pi)$. Let $\pi_1$ and $\pi_2$ be the two half hyperplanes bounded by $L$ inside $\pi$, with $\overline{E_1}\cap \pi \subset \pi_1$. Eventually, let $\E = \{E_1,E_2\}$. Then, the blow-up $W$ of $V_\E$ at $x \in \partial_\pi(\overline{E_1}\cap \pi)$ is unique and can be characterized as follows:
\[
W = \llbracket K\rrbracket + \llbracket K_1\rrbracket + \llbracket K_2\rrbracket,
\]
where
\begin{itemize}
\item  $\llbracket K\rrbracket$, $\llbracket K \cup K_1\rrbracket$ and $\llbracket K\cup K_2\rrbracket$ are the blow-ups respectively of $\llbracket\overline{E_1}\cap \pi\rrbracket$, $\llbracket\partial E_1\rrbracket$ and $\llbracket\partial E_2\rrbracket$. Moreover $K$ is a convex cone of dimension $n - 1$ with vertex at $0$ and entirely contained in $\pi_1$;
\item $K_1$ and $K_2$ are two cones with $K_i\cap K = \partial_\pi K, i=1,2,$ $K_1$ and $K_2$ are entirely contained respectively in each of the two half-spaces bounded by $\pi$. Furthermore,  for $i = 1,2$, $K_i$ is the graph of the restriction of a positively one-homogeneous convex function to a subset $A_i$ of a supporting hyperplane $\pi_i$ of $\overline{E_i}$ at $x$. $A_i$ need not to be convex, but it is a connected cone and $A_i^c$ is a convex cone inside $\pi_i$.
 \end{itemize}
\end{Cor}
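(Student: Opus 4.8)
The plan is to obtain $W$ as the sum of three blow-up varifolds and then identify their geometric structure using the results already established. Since $E_1$, $E_2$ are convex and separated by $\pi$ with $\overline{E_1}\cap\pi=\overline{E_2}\cap\pi$ an $(n-1)$-dimensional convex set, the varifold $V_\E$ near $x\in\partial_\pi(\overline{E_1}\cap\pi)$ decomposes (with multiplicity $1$, by Lemma \ref{lele}) into three pieces: the contact face $\overline{E_1}\cap\pi$ counted once, and the two ``free'' portions of $\partial E_1$ and $\partial E_2$ lying in the two open half-spaces. Because blow-up at a point is a linear operation on mutually transverse rectifiable pieces, the blow-up $W$ of $V_\E$ at $x$ splits as the sum of the blow-ups of these three varifolds, which I will name $\llbracket K\rrbracket$ (the contact face), $\llbracket K\cup K_1\rrbracket$ (coming from $\partial E_1$) and $\llbracket K\cup K_2\rrbracket$ (coming from $\partial E_2$); subtracting the shared face $\llbracket K\rrbracket$ once from each of the latter two gives the claimed form $W=\llbracket K\rrbracket+\llbracket K_1\rrbracket+\llbracket K_2\rrbracket$. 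Uniqueness of each blow-up is exactly Proposition \ref{uniquecone} applied to the convex sets $E_1$, $E_2$, and to the $(n-1)$-dimensional convex set $\overline{E_1}\cap\pi$ viewed inside the $(n-1)$-plane $\pi$.

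First I would analyze $\llbracket K\rrbracket$. Apply Proposition \ref{uniquecone} to the $(n-1)$-dimensional convex set $\overline{E_1}\cap\pi\subset\pi$ at the boundary point $x$: the blow-up is the graph, over the translated supporting $(n-2)$-plane $L-x$, of the positively one-homogeneous convex function $H$ defined as in \eqref{homogeneous}. This makes $K$ an $(n-1)$-dimensional convex cone with vertex at $0$, and since $L$ is a supporting plane to $\partial_\pi(\overline{E_1}\cap\pi)$ with $\overline{E_1}\cap\pi\subset\pi_1$, the final statement of Proposition \ref{uniquecone} (that the set lies in $\Epi_{L-x}(H)+x$, which is contained in $\pi_1$) forces $K\subset\pi_1$. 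Next I would analyze $K_1$ and $K_2$ symmetrically. Applying Proposition \ref{uniquecone} to $E_1$ at $x$ yields that the blow-up of $\llbracket\partial E_1\rrbracket$ is the graph over a supporting hyperplane $\pi_1'$ of $E_1$ at $x$ of a positively one-homogeneous convex function; I would denote by $K\cup K_1$ this blow-up and define $K_1$ as the portion lying in the open half-space of $\R^n\setminus\pi$ containing $E_1$. The overlap along $\pi$ is precisely the contact face blow-up $K$, so $K_1\cap K=\partial_\pi K$, and $K_1$ is contained in one open half-space bounded by $\pi$; the same argument with $E_2$ produces $K_2$ in the other half-space. The graphicality of $K_i$ over a subset $A_i$ of a supporting hyperplane $\pi_i$ of $\overline{E_i}$ is inherited directly from the one-homogeneous convex graph structure in Proposition \ref{uniquecone}.

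The remaining structural claim to verify is that $A_i$ is a connected cone with $A_i^c$ convex inside $\pi_i$. Here the point is that the full blow-up $K\cup K_i$ is the graph of a one-homogeneous convex function $G_i$ over all of $\pi_i$, and $K_i$ is the part of this graph whose projection to $\pi$ lands in the open half-space, equivalently the part where the graph leaves $\pi$; the projection of $K$ onto $\pi_i$ is exactly the complement $A_i^c$, which is the set over which $G_i$ records the flat contact piece $K\subset\pi$. Convexity of $A_i^c$ follows because it is the projection onto $\pi_i$ of the convex cone $K$ (convexity being preserved under linear projection of the cone along the graphing direction), while $A_i=\pi_i\setminus A_i^c$ is then a connected cone. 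I expect the main obstacle to be the careful bookkeeping that the three blow-ups share \emph{exactly} the single face $\llbracket K\rrbracket$ and nothing more, i.e. verifying the intersection identities $K_i\cap K=\partial_\pi K$ rather than some larger overlap; this requires using convexity and the separation hypothesis to ensure that near $x$ the only common boundary of $E_1$, $E_2$ and the exterior is the contact face in $\pi$, so that the blow-up limits of the three rectifiable pieces genuinely add without spurious cancellation or double-counting.
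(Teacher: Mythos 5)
Your proposal follows the same skeleton as the paper's proof: the same local decomposition of $V_\E$ near $x$ into the flat contact face $\llbracket\overline{E_1}\cap\pi\rrbracket$ plus the two free pieces $\llbracket\partial E_i\setminus\pi\rrbracket$, the same use of locality and linearity of blow-ups, the same appeal to Proposition \ref{uniquecone} for the blow-ups of the full boundaries $\llbracket\partial E_i\rrbracket$, and the same subtraction device recovering the blow-up of $\partial E_i\setminus\pi$ as the blow-up of $\partial E_i$ minus the blow-up of the face. Where you genuinely diverge is the face itself: the paper writes $\overline{E_1}\cap\pi$ near $x$ as the graph $\Gamma_{\varphi,A}$ of the convex function $\varphi$ over a convex subset $A$ of the supporting hyperplane of $E_1$, and then proves the varifold convergence of $\llbracket\Gamma_{\varphi_r,(A-x)/r}\rrbracket$ to $\llbracket\Gamma_{H_1,B}\rrbracket$ by an explicit three-term area-formula estimate (combining $L^2_{\loc}$ bounds, $L^1_{\loc}$ convergence of $\chi_{(A-x)/r}$, and $L^1_{\loc}$ convergence of the area elements); you instead exploit that the face is literally flat inside $\pi$, so convergence of the rescaled face varifolds reduces to $L^1_{\loc}$ convergence of the rescaled characteristic functions to the tangent cone, which bypasses that computation and is a genuine simplification. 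Three repairs are needed to make your version airtight. First, Proposition \ref{uniquecone} as stated concerns the boundary varifold $\llbracket\partial C\rrbracket$, so applied to $C=\overline{E_1}\cap\pi$ inside $\pi$ it controls the $(n-2)$-dimensional relative boundary, not the $(n-1)$-dimensional face; what you actually need is the (easy) consequence that $\chi_{(\overline{E_1}\cap\pi-x)/r}\to\chi_{K}$ in $L^1_{\loc}(\pi)$, which follows from the uniform convergence statement in Proposition \ref{uniquecone}, after which flatness gives the varifold convergence in one line. Second, your identification of $A_i^c$ as the projection $p_{\pi_i}(K)$ presupposes that $K$ is contained in the full-boundary blow-up $\Gamma_{H_i}$; this is not automatic from your phrasing and should be justified by monotonicity: $\llbracket\overline{E_1}\cap\pi\rrbracket\le\llbracket\partial E_1\rrbracket$ as measures is preserved under rescaling and weak-$*$ limits, so $\mathcal{H}^{n-1}(K\setminus\Gamma_{H_i})=0$, and since $K\setminus\Gamma_{H_i}$ is relatively open in the $(n-1)$-dimensional cone $K$ it must be empty. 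Third, you should define $K_i$ as the closed cone, i.e.\ the graph of $H_i$ over $\overline{A_i}$, rather than "the portion lying in the open half-space": with your definition $K_i\cap K$ would be empty, whereas with the closure the identity $K_i\cap K=\partial_\pi K$ follows exactly as you intend, because the graph map is a homeomorphism and $\overline{A_i}\cap A_i^c=\partial_{\pi_i}(A_i^c)$.
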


\begin{proof}
In a small ball $B_\delta(x)$, we have that
\[
V_\E\llcorner B_\delta(x) = \llbracket \overline{E_1}\cap \pi \rrbracket\llcorner B_\delta(x) + \llbracket \partial{E_1}\setminus \pi\rrbracket\llcorner B_\delta(x) + \llbracket \partial{E_2}\setminus \pi\rrbracket\llcorner B_\delta(x) =: V_1 + V_2 + V_3.
\]
Since clearly a blow-up of a varifold is local and linear, we can study separately the blow-ups of the three addenda of the previous expression. In particular, the study of the blow-up of $V_3$ is entirely analogous to the one of $V_2$, hence we can reduce ourselves to study the blow-ups of the first two addenda. The key point here is that we can express these objects as graphs of convex functions through Proposition \ref{graph}.

In particular, take any supporting hyperplane $\pi_1$ for $\overline{E_1}$ at $x \in \partial_\pi(\overline{E_1}\cap \pi)$ for which we can apply Proposition \ref{graph} at $x$ with the plane $\pi_1$. Hence we find a convex function $\varphi$ as in the statement of Proposition \ref{graph}, defined on $B_\delta^{\pi_1}(x)$. Notice that, possibly taking a smaller ball, $\varphi$ can be chosen to be Lipschitz in $B_\delta^{\pi_1}(x)$. Moreover:
\begin{equation}\label{phizer}
\varphi(y) \ge 0, \quad \forall y \in B_\delta^{\pi_1}(x), \qquad \text{ and } \qquad \varphi(x) = 0.
\end{equation}
We further simplify the study of the blow-ups in the following way. We write
\begin{equation}\label{sum}
\llbracket \partial E_1 \rrbracket\llcorner B_\delta(x) = \llbracket \overline{E_1}\cap \pi \rrbracket\llcorner B_\delta(x) + \llbracket \partial{E_1}\setminus \pi\rrbracket\llcorner B_\delta(x).
\end{equation}
If we show separately that the blow-up of $\llbracket \partial E_1 \rrbracket\llcorner B_\delta(x)$ at $x$ is given by the graph of the positively one-homogeneous convex function $H_1$ defined on $\pi_1$ as in \eqref{homogeneous}, and that the blow-up of $\llbracket \overline{E_1}\cap \pi \rrbracket\llcorner B_\delta(x)$ is given by the graph of $H_1$ on a subset $A$ of $\pi_1$, then it readily follows that also the blow-up of $ \llbracket \partial{E_1}\setminus \pi\rrbracket\llcorner B_\delta(x)$ is unique and is given by the graph of $H_1$ on $\pi_1 \setminus A$.
\\
\\
\fbox{Blow-up of $\llbracket \partial E_1 \rrbracket\llcorner B_\delta(x)$.} Without loss of generality, we can assume $\pi_1 = \{y \in \R^n: y_n = 0\}$. Let $\eps > 0$ be sufficiently small such that
\[
\partial{E_1}\cap B_{\eps,\Lip(\varphi)\eps}^{\pi_1}(x) = \{(y,\varphi(y)): y \in B_\eps^{\pi_1}(x)\}.
\]
The blow-up of $\llbracket\partial E_1\rrbracket$ at $x$ coincides with that of $\llbracket\partial{E_1}\cap B_{\eps,\Lip(\varphi)\eps}^{\pi_1}(x)\rrbracket$.  Recalling the notation of Subsection \ref{introvar}, by the area formula it readily follows that for $r$ sufficiently small
\begin{equation}\label{areaspec}
(\llbracket \partial{E_1}\cap B_{\eps,\Lip(\varphi)\eps}^{\pi_1}(x) \rrbracket)_{x,r} = \llbracket \Gamma_{\varphi_r,B_{\eps /r}^{\pi_1}(0)}\rrbracket,
\qquad \mbox{ 
where } \qquad 
\varphi_r(w) := \frac{\varphi(x + rw) - \varphi(x)}{r}.
\end{equation}
Since $\varphi$ is convex, by Proposition \ref{uniquecone}, we infer that for every fixed $s$,
$
\llbracket \Gamma_{\varphi_r,B_{s}^{\pi_1}(x)}\rrbracket \weak \llbracket \Gamma_{H_1,B_{s}^{\pi_1}(0)}\rrbracket
$
in the sense of varifolds. It then follows that the blow-up of $\llbracket \partial E_1 \rrbracket\llcorner B_\delta(x)$ coincides with $\llbracket\Gamma_{H_1}\rrbracket$.
\\
\\
\fbox{Blow-up of $\llbracket \overline{E_1}\cap \pi \rrbracket\llcorner B_\delta(x)$.} With the same notation of the previous case, the convex set $A \subset B_{\eps}^{\pi_1}(x)$ 
\[
A := p_{\pi_1}(\overline{E_1}\cap \pi \cap B_{\eps,\Lip(\varphi)\eps}^{\pi_1}(x)),
\]
satisfies
\[
\overline{E_1}\cap \pi\cap B_{\eps,\Lip(\varphi)\eps}^{\pi_1}(x) = \{(y,\varphi(y)): y \in A\}.
\]
The blow-up of $\llbracket \overline{E_1}\cap \pi \rrbracket$ at $x$ coincides with that of $\llbracket \overline{E_1}\cap \pi\cap B_{\eps,\Lip(\varphi)\eps}^{\pi_1}(x)\rrbracket$.  As in \eqref{areaspec}, we have
\[
\llbracket \overline{E_1}\cap \pi\cap B_{\eps,\Lip(\varphi)\eps}^{\pi_1}(x) \rrbracket_{x,r} = \left\llbracket \Gamma_{\varphi_r,\frac{A - x}{r}}\right\rrbracket.
\]
Since $A$ is a convex set inside $\pi_1$, we can find a supporting $(n-2)$-dimensional plane $L'$ for $A$ at $x$, on which $\partial_{\pi_1}A\cap B_{\rho}(x)$ can be written as the graph of some convex function $\psi$ defined on $B_\rho^{L'}(x)$. The existence of a sufficiently small $\rho > 0$ with this property is again guaranteed by Proposition \ref{graph}. Moreover, $A \cap B_\rho(x)$ is given by the open epigraph of $\psi$. Since the rescaling family $\psi_r$ of $\psi$, defined analogously to \eqref{areaspec}, are equi-Lipschitz and convex, we deduce that they uniformly converge to a convex one-homogeneous function $G$. This yields the strong local convergence in $L^1$ of the characteristic functions
$
\chi_{\frac{A - x}{r}} \to \chi_{B},
$
where $B \subset \pi_1$ is the epigraph of $G$. Now we will show that
\[
\left\llbracket \Gamma_{\varphi_r,\frac{A - x}{r}}\right\rrbracket \weak \left\llbracket \Gamma_{H_1,B}\right\rrbracket.
\]
Indeed for every $f \in E_c(\R^n \times \mathbb G(n, n-1))$ we compute
\begin{equation}
\begin{split}
\left\llbracket \Gamma_{\varphi_r,\frac{A - x}{r}}\right\rrbracket (f) &= \int_{ \Gamma_{\varphi_r,\frac{A - x}{r}}}f(y, T_y \Gamma_{\varphi_r,\frac{A - x}{r}}) d\Haus^{n-1}(y)=\int_{ \frac{A - x}{r}}f(y',\varphi_r(y'), h(D\varphi_r(y'))) \A (D\varphi_r(y')) dy'\\
&=\int_{\pi_1}(\chi_{ \frac{A - x}{r}}(y')-\chi_{B}(y'))f(y',\varphi_r(y'), h(D\varphi_r(y'))) \A (D\varphi_r(y')) dy' \\
&\quad + \int_{\pi_1}\chi_{B}(y')f(y',\varphi_r(y'), h(D\varphi_r(y'))) (\A (D\varphi_r(y'))- \A (DH_1(y')))dy'\\
&\quad + \int_{\pi_1}\chi_{B}(y')f(y',\varphi_r(y'), h(D\varphi_r(y')))  \A (DH_1(y'))dy'.
\end{split}
\end{equation}
The first term in the last equality converges to zero, as $f(y',\varphi_r(y'), h(D\varphi_r(y'))) \A (D\varphi_r(y'))$ is locally equibounded in $L^2$, $f$ is compactly supported,  and $\chi_{ \frac{A - x}{r}}\to \chi_{B}$ strongly in $L^2_{\textrm{loc}}$. 
The second term converges to zero, as $\chi_{B}(y')f(y',\varphi_r(y'), h(D\varphi_r(y')))$ is compactly supported and equibounded in $L^\infty$ and $\A (D\varphi_r(y'))\to \A (DH_1(y'))$ strongly in $L^1_{\textrm{loc}}$.
Hence, by dominated convergence and reapplying the area formula
$$
\left\llbracket \Gamma_{\varphi_r,\frac{A - x}{r}}\right\rrbracket(f) \to \int_{\pi_1}\chi_{B}(y')f(y',H_1(y'), h(DH_1(y')))  \A (DH_1(y'))dy'=\left\llbracket \Gamma_{H_1,B}\right\rrbracket(f).
$$
This concludes the proof of the Corollary.
\end{proof}

\begin{Cor}\label{corstruct31}
Let $E_1,E_2$ and $E_3$ be convex disjoint sets of $\R^3$, and let as usual $\E = \{E_1,E_2,E_3\}$. Define the planes $\pi_{ij}$ to be the planes separating $E_i$ from $E_j$ for $1\le i \neq j \le 3$. Assume:
\begin{enumerate}[\quad(H1)]
\item $\pi_{12}\cap \pi_{13} = L$, where $L$ is a line;
\item $\overline{E_1}\cap \pi_{12} = \overline{E_2}\cap \pi_{12}$, and these are two-dimensional sets;
\item $\overline{E_1}\cap \pi_{13} = \overline{E_3} \cap \pi_{13}$, and these are two-dimensional sets;
\item $\Haus^{2}(\partial E_2\cap \partial E_3) = 0$;
\item $ \partial E_1\cap L =\partial E_2\cap L = \partial E_3 \cap L\neq \emptyset$.
\end{enumerate}
Let $\Sigma_j$, $j \in \{1,2,3,4\}$, be the four open convex sectors in which $\pi_{12}$ and $\pi_{13}$ subdivide $\R^3$, with
\[
E_1 \subset \Sigma_1, \quad E_2 \subset \Sigma_2 \cup \Sigma_4\quad \text{ and } E_3 \subset \Sigma_3 \cup \Sigma_4.
\]
Fix $x \in  \partial E_1\cap L$. Then, the unique blow-up $W$ of $V_\E$ at $x$ is a $2$-dimensional rectifiable varifold and can be characterized as follows:
\[
W = \llbracket K\rrbracket + \llbracket K_2\rrbracket + \llbracket K_3\rrbracket,
\]
where
\begin{itemize}
\item $\llbracket K\rrbracket$ is the blow-up of $\llbracket \partial E_1\rrbracket$ at $x$ and $K$ is the $2$-dimensional boundary of a convex cone with vertex at $0$ and entirely contained in $\overline{\Sigma_1}-x$;
\item For $i = 2,3$, $K_i$ is a cone with $K_i \cap K = \partial_{\pi_{1i}}(K\cap \pi_{1i})$, and is the blow-up of $\llbracket\partial E_i \setminus (\partial E_j\cup \partial E_k)\rrbracket$, with $\{i,j,k\} =\{1,2,3\}$. Moreover, $K_i$ is the graph of the restriction of a positively one-homogeneous convex function to a subset $A_i$ of a supporting hyperplane $\pi_i$ of $\partial E_i$ at $x$. $A_i$ needs not to be convex, but it is a connected cone and $A_i^c$ is a convex cone inside $\pi_i$. Finally, $K_i \subset \overline{\Sigma_i\cup \Sigma_4}-x$, for $i = 2,3$.
 \end{itemize}
\end{Cor}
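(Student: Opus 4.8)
The proof reduces to three applications of the blow-up analysis of Corollary \ref{corstruct} and Proposition \ref{uniquecone}, organized around the locality and additivity of the blow-up. \textbf{Localization.} First I would fix $\delta>0$ so small that $B_\delta(x)$ meets only the interfaces incident to $x$. By (H2)--(H3) one has $\partial E_1\cap\partial E_i=\overline{E_1}\cap\pi_{1i}$ near $x$ for $i=2,3$, while (H4) makes $\partial E_2\cap\partial E_3$ an $\Haus^2$-null set; combined with Lemma \ref{lele}, inside $B_\delta(x)$ the support of $V_\E$ consists, up to $\Haus^2$-null sets, of $\partial E_1$, $\partial E_2\setminus\pi_{12}$ and $\partial E_3\setminus\pi_{13}$, each carried with multiplicity one:
\[
V_\E\llcorner B_\delta(x)=\llbracket \partial E_1\rrbracket\llcorner B_\delta(x)+\llbracket \partial E_2\setminus\pi_{12}\rrbracket\llcorner B_\delta(x)+\llbracket \partial E_3\setminus\pi_{13}\rrbracket\llcorner B_\delta(x).
\]
Since the blow-up is local and linear, $W$ is the sum of the blow-ups of the three summands, and its uniqueness follows once each piece has a unique blow-up.

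\textbf{The cone $K$.} Applying Proposition \ref{uniquecone} to the bounded convex set $E_1\subset\R^3$ gives a unique blow-up $\llbracket K\rrbracket$ of $\llbracket\partial E_1\rrbracket$ at $x$, with $K=\partial U$ the boundary of a convex cone $U$ of vertex $0$. As $E_1\subseteq\Sigma_1$ near $x$ and $\Sigma_1$ is a convex wedge invariant under dilations centered at points of its edge $L\ni x$, the inclusion passes to the limit, giving $U\subseteq\overline{\Sigma_1}-x$ and hence $K\subseteq\overline{\Sigma_1}-x$. This proves the first bullet.

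\textbf{The free cones $K_i$.} For $i\in\{2,3\}$ I would repeat verbatim the argument of Corollary \ref{corstruct}, with $E_1,\pi$ there replaced by $E_i,\pi_{1i}$ and contact region $\overline{E_i}\cap\pi_{1i}$. Fixing a supporting hyperplane $\pi_i$ of $E_i$ at $x$ and writing $\partial E_i$ as the graph of a convex function over $\pi_i$, Proposition \ref{uniquecone} identifies the blow-up of $\llbracket\partial E_i\rrbracket$ with the graph $\Gamma_{H_i}$ of the one-homogeneous convex function $H_i$ defined in \eqref{homogeneous}, and the blow-up of $\llbracket\overline{E_i}\cap\pi_{1i}\rrbracket$ with $\Gamma_{H_i,B_i}$ for a convex cone $B_i\subseteq\pi_i$. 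Subtracting the two shows that the blow-up of $\llbracket\partial E_i\setminus\pi_{1i}\rrbracket$ is $K_i:=\Gamma_{H_i,A_i}$ with $A_i:=\pi_i\setminus B_i$, so that $A_i^c=B_i$ is a convex cone and $A_i$ is a connected cone. The inclusion $E_i\subseteq\Sigma_i\cup\Sigma_4$ passes to the blow-up exactly as for $K$, giving $K_i\subseteq\overline{\Sigma_i\cup\Sigma_4}-x$.

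\textbf{Matching and the main obstacle.} It remains to prove $K_i\cap K=\partial_{\pi_{1i}}(K\cap\pi_{1i})$, which I expect to be the only genuinely delicate point. By (H2)--(H3) the contact region is common to the two bubbles, $\overline{E_1}\cap\pi_{1i}=\overline{E_i}\cap\pi_{1i}$, so its blow-up is a convex cone inside $\pi_{1i}$ that must be computed consistently from either side: it equals $K\cap\pi_{1i}$, the facet of the $E_1$-cone lying on $\pi_{1i}$, and simultaneously $\Gamma_{H_i,B_i}$, the portion of the $E_i$-graph removed above. Since $K\subseteq\overline{\Sigma_1}-x$ and $K_i\subseteq\overline{\Sigma_i\cup\Sigma_4}-x$ lie on opposite sides of $\pi_{1i}$, the two cones can meet only within $\pi_{1i}$, and $K_i$ detaches from the common contact region precisely along its relative boundary, whence $K_i\cap K=\partial_{\pi_{1i}}(K\cap\pi_{1i})$. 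The main obstacle is therefore not a new blow-up computation but the careful bookkeeping forced by $x$ lying on the edge $L$, where both separating planes and all three bubbles meet simultaneously: one must verify that the local decomposition above is exhaustive and that the one-dimensional contact rims produced from the $E_1$-side and from the $E_i$-side genuinely coincide. This is exactly where (H1) and (H5) pin down the wedge geometry and (H4) discards the spurious $E_2$--$E_3$ interface.
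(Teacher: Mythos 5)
Your strategy is the same as the paper's. The paper's entire proof of Corollary \ref{corstruct31} is the observation that in a small ball
\[
V_\E\llcorner B_\delta(x)=\llbracket \partial E_1\rrbracket\llcorner B_\delta(x)+\llbracket \partial E_2\setminus(\partial E_1\cup\partial E_3)\rrbracket\llcorner B_\delta(x)+\llbracket \partial E_3\setminus(\partial E_1\cup\partial E_2)\rrbracket\llcorner B_\delta(x),
\]
a decomposition that agrees with yours up to $\Haus^2$-negligible sets by (H2)--(H4), after which the three blow-ups are computed separately, ``analogously to Corollary \ref{corstruct}''; your treatment of $K$ via Proposition \ref{uniquecone} and of $K_2,K_3$ via the graph argument of Corollary \ref{corstruct} is a faithful, more detailed rendering of exactly this.

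However, the matching identity $K_i\cap K=\partial_{\pi_{1i}}(K\cap\pi_{1i})$, which you rightly single out as the delicate point, is not actually established by your argument. What the graph decomposition gives is that $K_i$ meets the blow-up of the contact set $\overline{E_1}\cap\pi_{1i}=\overline{E_i}\cap\pi_{1i}$ exactly along the relative boundary of that blow-up; to reach the stated identity you then assert that this contact blow-up coincides with $K\cap\pi_{1i}$ because it ``must be computed consistently from either side''. That step is a non sequitur: $K$ is the blow-up of the \emph{whole} of $\partial E_1$, and the blow-up of the free part $\partial E_1\cap\Sigma_1$ can contribute an extra two-dimensional piece lying inside $\pi_{1i}$, namely when the free boundary meets $\pi_{1i}$ tangentially. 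Concretely, let $E'\subset\R^2$ be a bounded open convex set whose boundary near the origin is $\{x_3=x_1^2,\ x_1\le 0\}\cup\{x_3=0,\ 0\le x_1\le a\}$, put $E_1:=\{(x_1,x_2,x_3):(x_1,x_3)\in E',\ 0<x_2<1\}$, $\pi_{12}=\{x_3=0\}$, $\pi_{13}=\{x_2=0\}$, and let $E_2$, $E_3$ be the open convex hulls of the two contact faces with a point strictly on the respective far side. All of (H1)--(H5) hold, but at the origin the tangential parabola forces the free part of $\partial E_1$ to blow up onto the quarter-plane $\{x_1\le 0,\ x_2\ge 0\}$ of $\pi_{12}$, so $K=\partial\Sigma_1$, $K\cap\pi_{12}$ is the full half-plane $\pi_{12}\cap\overline{\Sigma_1}$ and $\partial_{\pi_{12}}(K\cap\pi_{12})=L$, whereas $K_2\cap K$ consists only of the two rays bounding the quarter-plane blow-up of the contact square. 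Hence, read literally, the matching identity cannot be deduced from (H1)--(H5) and convexity, and it fails in this example; what your argument does prove is the version with $K\cap\pi_{1i}$ replaced by the blow-up of the contact set $\partial E_1\cap\pi_{1i}$, which is presumably the intended meaning. To be fair, the paper's own two-line proof is silent on this point as well, so your proposal is no weaker than the printed argument; but the step you advertise as the main obstacle is closed by assertion rather than by proof, and some additional input (e.g.\ excluding tangential blow-up of the free boundary, or invoking stationarity as the paper does in its applications) is genuinely needed there.
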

\begin{proof}
The proof is analogous to the one of Corollary \ref{corstruct}, once we notice that in a sufficiently small ball $B_\delta(x)$,
\[
V_\E\llcorner B_\delta(x) = \llbracket \partial{E_1}\rrbracket\llcorner B_\delta(x) + \llbracket \partial{E_2}\setminus (\partial E_1\cup \partial E_3)\rrbracket\llcorner B_\delta(x) + \llbracket \partial{E_3}\setminus (\partial E_1\cup \partial E_2)\rrbracket\llcorner B_\delta(x),
\]
and hence that the three blow-ups can be computed separately. In particular, $\llbracket K\rrbracket$ is the blow-up of $ \llbracket \partial{E_1}\rrbracket\llcorner B_\delta(x)$, $\llbracket K_2\rrbracket$ is the blow-up of $\llbracket \partial{E_2}\setminus (\partial E_1\cup \partial E_3)\rrbracket\llcorner B_\delta(x)$ and $\llbracket K_3\rrbracket$ is the blow-up of $\llbracket \partial{E_3}\setminus (\partial E_1\cup \partial E_2)\rrbracket\llcorner B_\delta(x)$.
\end{proof}

\subsection{Classification of surfaces}

 In a few instances, we will need to know \emph{a priori} that the surface under consideration is, roughly speaking, equivalent to a disk in $\R^2$ or to an annulus in $\R^2$. Below we give sufficient conditions to infer this.  We will use the terminology from \cite{FMC,PRT,MCC}.
 \begin{Def}\label{def:DTRT}
 Let $S \subset \R^3$ be a smooth surface with piecewise smooth boundary. Then, $S$ is \emph{of disk-type} (or topologically a disk) if there exists a homeomorphism $r : \overline{B_1(0)} \subset \R^2 \to S$. $S$ is \emph{of ring-type} if it is a compact, connected, orientable surface with two boundary components and Euler-Poincar\'e characteristic zero.
 \end{Def}

\subsubsection{Ring-type surfaces}

\begin{prop}\label{RT}
Let $E \subset \R^3$ be a convex subset. Let $\pi_1$, $\pi_2\subset \R^3$ be two affine planes and $C_1:=\pi_1\cap \partial E$, $C_2:=\pi_2\cap \partial E$ be disjoint and 2-dimensional. Then $\overline{\partial E\setminus C_1\cup C_2}$ is of ring-type according to Definition \ref{def:DTRT}.
\end{prop}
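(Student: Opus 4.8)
The plan is to reduce the statement to the standard topological fact that a sphere with two open disks removed is a cylinder, which is exactly a ring-type surface. First I would observe that, since $C_1$ and $C_2$ are two-dimensional, the convex set $E$ has dimension $3$; fixing a point $p\in \intt(E)$, the radial projection $x\mapsto p+\frac{x-p}{|x-p|}$ from $p$ is a continuous bijection between $\partial E$ and the sphere $\mathbb S^2$ centered at $p$, hence a homeomorphism by compactness of $\partial E$ (this is the radial homeomorphism already exploited in the convex setting). Thus $\partial E$ is a topological $2$-sphere, and every invariant required by Definition \ref{def:DTRT} can be computed after transporting the configuration to $\mathbb S^2$.

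Next I would describe the two faces as embedded closed disks. Each $C_i$ is a compact convex subset of a supporting plane $H_i$ of $E$ with nonempty relative interior, hence $C_i$ is homeomorphic to the closed disk $\overline{B_1}\subset \R^2$ and its relative boundary $\gamma_i:=\partial_{H_i}C_i$ is a Jordan curve. The flatness of the face enters here: at every point of the relative interior of $C_i$, the supporting plane $H_i$ touches $E$ along a two-dimensional set, so locally $\partial E$ coincides with $H_i$; consequently the relative interior of $C_i$ inside $H_i$ coincides with its interior $\intt_{\partial E}C_i$ as a subset of the manifold $\partial E$, and $\gamma_i$ is precisely the manifold boundary $\partial_{\partial E}C_i$. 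Taking closures therefore only adds back the two Jordan curves, giving
\[
\overline{\partial E\setminus (C_1\cup C_2)} = \partial E\setminus \big(\intt_{\partial E}C_1\cup \intt_{\partial E}C_2\big),
\]
that is, the $2$-sphere with two disjoint open topological disks removed.

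I would then invoke the Jordan--Schoenflies theorem. The disjoint Jordan curves $\gamma_1,\gamma_2\subset \mathbb S^2$ each bound a closed topological disk $D_i$ (the image of $C_i$), and $D_1\cap D_2=\emptyset$ forces $D_2\subset \mathbb S^2\setminus D_1$. Removing $\intt D_1$ leaves a closed disk bounded by $\gamma_1$ inside which $D_2$ lies, and removing $\intt D_2$ then produces a closed disk with one hole, i.e.\ a set homeomorphic to the closed annulus $S^1\times[0,1]$. This identification yields compactness and connectedness, orientability (a subsurface of the orientable $\mathbb S^2$ is orientable), and exactly the two boundary components $\gamma_1,\gamma_2$. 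For the Euler characteristic I would use additivity: from $\chi(\mathbb S^2)=2$, $\chi(D_i)=1$ and $\chi(\gamma_i)=0$, the gluing (inclusion--exclusion) formula gives $2=\chi\big(\overline{\partial E\setminus (C_1\cup C_2)}\big)+1+1$, so $\chi=0$. By Definition \ref{def:DTRT} the surface is of ring-type.

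The only delicate points, which I would spell out carefully, are the two embeddedness facts in the second step: that each face is a closed disk whose relative boundary is a Jordan curve sitting cleanly inside the manifold $\partial E$, so that the closure operation adds back exactly $\gamma_1\cup\gamma_2$ rather than a larger or more singular set. This is where convexity and the flatness of the faces are genuinely needed. Once the faces are recognized as standardly embedded closed disks in the topological sphere $\partial E$, the remaining conclusions -- connectedness, the count of boundary components, and the vanishing Euler characteristic -- follow immediately from the Jordan--Schoenflies theorem and the classification of compact surfaces.
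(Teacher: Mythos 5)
Your proposal is correct, but its engine differs from the paper's. The paper never determines the homeomorphism type of $\Sigma := \overline{\partial E\setminus C_1\cup C_2}$: it verifies the conditions of Definition \ref{def:DTRT} one at a time. Compactness, orientability and the two boundary components $\partial C_1,\partial C_2$ are immediate; the Euler characteristic is computed by exactly the inclusion--exclusion you also write down, $\chi(\Sigma)=\chi(\partial E)-\chi(C_1)-\chi(C_2)+\chi(\partial C_1)+\chi(\partial C_2)=2-1-1+0+0=0$; and connectedness is proved by an explicit path-surgery argument: a path in $\partial E$ joining two points of $\Sigma$ is modified, on the parameter interval where it meets $C_i$, by rerouting it along the path-connected curve $\partial C_i$. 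You instead prove the stronger statement that $\Sigma$ is a closed annulus: transport the configuration to $\mathbb S^2$ by the radial projection, observe that the faces become two disjoint closed topological disks, and apply Jordan--Schoenflies, after which connectedness, the boundary count and $\chi=0$ are all automatic. Your route buys the explicit topological type (more than ring-type asks for) and dispenses with the path argument, at the price of invoking Schoenflies and of the point-set identity $\overline{\partial E\setminus(C_1\cup C_2)}=\partial E\setminus(\intt_{\partial E}C_1\cup\intt_{\partial E}C_2)$, which the paper's proof never needs. That identity is indeed the one delicate point, and the justification you sketch can be completed: for $x$ in the relative interior of $C_i=\partial E\cap H_i$, any point of $\partial E$ close to $x$ but strictly on the $E$-side of $H_i$ would lie in the open cone spanned by an interior point of $E$ and a small $2$-dimensional disk of $C_i$ centered at $x$, hence inside the open set $E$, a contradiction; thus $\partial E$ coincides with $H_i$ near $x$, so $\mathrm{relint}_{H_i}C_i=\intt_{\partial E}C_i$ and the closure operation adds back exactly the two Jordan curves $\gamma_1\cup\gamma_2$.
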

\begin{proof}
Let $\Sigma := \overline{\partial E\setminus C_1\cup C_2}$. $\Sigma$ is clearly compact, it is orientable since $\partial E$ is orientable, and it has two boundary components, that is the disjoint Lipschitz curves $\partial C_1$ and $\partial C_2$, where with a slight abuse of notation $\partial$ is denoting the boundary in the induced topology on ${\partial E}$. The Euler characteristic $\chi(\Sigma)$ is:
\begin{align*}
\chi(\Sigma) &= \chi(\partial E) - \chi(C_1 \cup C_2) + \chi(\Sigma \cap (C_1\cup C_2)) = \chi(\partial E) - \chi(C_1) -  \chi(C_2) + \chi(\partial C_1) + \chi(\partial C_2).
\end{align*}
In this chain of equalities we have used that $C_1\cap C_2 = \emptyset$ and $\Sigma \cap (C_1\cup C_2) = \partial C_1\cup \partial C_2$. Since $\partial E$ is homeomorphic to a sphere, we have $\chi(\partial E) = 2$. Moreover, since $C_1$ and $C_2$ are convex two-dimensional sets, they are homeomorphic to disks, and hence $\chi(C_1) = \chi(C_2) = 1$. Finally, $\partial C_1$ and $\partial C_2$ are simple closed loops homeomorphic to a circle, and hence $\chi(\partial C_1) = \chi(\partial C_2) = 0$. Hence $\chi(\Sigma) = 0$. 

We are just left to prove that $\Sigma$ is connected. To this aim, consider $x,y \in \Sigma$. Since $\partial E$ is path connected, we find $\gamma : [0,1]\to \partial E$ such that $\gamma(0) = x, \gamma(1) =y$. If $\im(\gamma) \cap (C_1\cup C_2) = \emptyset$, then there is nothing to prove. Otherwise, assume without loss of generality that $\im(\gamma) \cap C_1 \neq \emptyset$. In this case let
\[
t_1 = \min\{t \in [0,1]: \dist(\gamma(t),C_1) = 0\}, \quad t_2 = \max\{t \in [0,1]: \dist(\gamma(t),C_1) = 0\}.
\]
Notice that these are well-defined since $\gamma$ is continuous and $C_1$ is closed, and that $\gamma(t_1),\gamma(t_2) \in \partial C_1$. Since $\partial C_1$ is path connected, we may find $\eta: [t_1,t_2] \to \partial C_1$ such that $\eta(t_i) = \gamma(t_i), \forall i =1,2$. By replacing $\gamma$ with $\eta$ in $[t_1,t_2]$, we can define a new path $\bar \gamma$ connecting $x$ and $y$ which does not intersect the interior of $C_1$ in the relative topology of $\partial E$. Now we can further consider the two cases in which $\im(\bar \gamma) \cap C_2 = \emptyset$ or $\im(\bar \gamma) \cap C_2 \neq \emptyset$. In the first case, there is nothing to prove. In the second, we can reason as above to replace $\bar \gamma$ with a continuous curve which connects $x,y$ and that does not intersect the interior of $C_2$.
\end{proof}

\subsection{Properties of stationary convex sets}
%

\begin{prop}\label{rigconv}
Let $\mathcal{L} : \R^n \to \R$ be $C^2$ and uniformly convex on compact sets. If $D \subset \R^n$ is an open and connected domain and $u: D \to \R$ is a locally convex function that satisfies in the sense of distributions
\begin{equation}\label{weakform}
\dv(D\mathcal{L}(Du)) = 0,
\end{equation}
then $u$ must be affine on $D$.
\end{prop}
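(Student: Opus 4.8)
The plan is to upgrade $u$ to an $H^2_{\loc}$ solution, rewrite the equation in the pointwise form $\langle D^2\mathcal L(Du), D^2u\rangle = 0$, and then extract $D^2 u = 0$ from a purely algebraic fact about traces of products of positive (semi)definite matrices. First I would record the consequences of convexity. Since $u$ is locally convex it is locally Lipschitz, so $Du \in L^\infty_{\loc}(D,\R^n)$ and the distributional Hessian $D^2 u$ is a matrix-valued Radon measure taking values in the positive semidefinite cone. Fixing a ball $B \Subset D$, the image $Du(B)$ lies in a compact set $\mathcal K \subset \R^n$, and the hypotheses on $\mathcal L$ furnish constants $0 < \lambda \le \Lambda$ with $\lambda\,\Id \le D^2\mathcal L(p) \le \Lambda\,\Id$ for all $p \in \mathcal K$. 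In particular $D\mathcal L(Du) \in L^\infty_{\loc}$ and \eqref{weakform} reads $\int_D (D\mathcal L(Du),D\varphi)\,dx = 0$ for all $\varphi \in C^\infty_c(D)$.

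The technical core, and the step I expect to be the main obstacle, is the interior $H^2$ estimate. Because $\mathcal L$ has no explicit $x$-dependence, every translate of $u$ solves the same equation, so for a coordinate direction $e_s$ and small $h$ the difference $u_h := u(\cdot + h e_s) - u$ satisfies $\int (D\mathcal L(Du(\cdot+he_s)) - D\mathcal L(Du),D\varphi)\,dx = 0$ on $B$. Writing $D\mathcal L(Du(\cdot+he_s)) - D\mathcal L(Du) = \big(\int_0^1 D^2\mathcal L(Du + t\,Du_h)\,dt\big)Du_h$, testing with $\varphi = \eta^2 u_h$ for a cutoff $\eta$ supported in $B$, and using the lower ellipticity bound $\lambda\,\Id$ to control the leading term while absorbing the $D\eta$-error via Cauchy--Schwarz and Young (and the Lipschitz bound $u_h = O(h)$ in $L^\infty$), I would obtain $\int_{B'}|Du_h|^2\,dx \le C h^2$ for $B' \Subset B$. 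This is the standard Nirenberg translation method, whose only input is the two-sided bound on $D^2\mathcal L$ over $\mathcal K$. Letting $h\to 0$ in each direction gives $Du \in H^1_{\loc}(D)$, i.e. $u \in H^2_{\loc}(D)$ with $D^2 u \in L^2_{\loc}$; in particular the distributional Hessian has no singular part, so $D^2 u(x) \succeq 0$ for a.e. $x$.

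With $u \in H^2_{\loc}$ and $\mathcal L \in C^2$, the Sobolev chain rule applies to $Du \in H^1_{\loc}\cap L^\infty_{\loc}$, yielding $D\mathcal L(Du) \in H^1_{\loc}$ with $\partial_j[\mathcal L_{p_j}(Du)] = \sum_k \mathcal L_{p_jp_k}(Du)\,\partial_j\partial_k u$. Hence the distributional divergence $\dv(D\mathcal L(Du))$ coincides with the $L^2_{\loc}$ function $\langle D^2\mathcal L(Du),D^2 u\rangle$, which \eqref{weakform} forces to vanish a.e.:
\[
\langle D^2\mathcal L(Du(x)), D^2 u(x)\rangle = 0 \quad \text{for a.e. } x \in D.
\]

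Now comes the elementary but decisive observation: if $A$ is symmetric positive definite and $B$ symmetric positive semidefinite, then $\tr(AB) = \tr(A^{1/2} B A^{1/2}) \ge 0$, with equality if and only if $A^{1/2} B A^{1/2} = 0$, that is (since $A^{1/2}$ is invertible) if and only if $B = 0$. Applying this with $A = D^2\mathcal L(Du(x)) \succ 0$ and $B = D^2 u(x) \succeq 0$ gives $D^2 u(x) = 0$ for a.e. $x \in D$. Since $D$ is open and connected and $Du \in H^1_{\loc}$ with $D(Du) = 0$ a.e., $Du$ is constant on $D$, and therefore $u$ is affine, as claimed.
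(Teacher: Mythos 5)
Your proof is correct and takes essentially the same route as the paper's: upgrade to $W^{2,2}_{\loc}$ regularity (the paper cites \cite[Proposition 8.6]{GM} for precisely the difference-quotient estimate you carry out by hand), pass to the a.e.\ strong form $\langle D^2\mathcal{L}(Du), D^2u\rangle = 0$, and conclude from the algebraic fact that the trace pairing of a positive definite matrix with a positive semidefinite one vanishes only if the latter is zero. The only cosmetic differences are that you prove the regularity and chain-rule steps inline rather than by citation, and you verify the algebraic step via $A^{1/2}BA^{1/2}$ where the paper uses the eigendecomposition of $A$.
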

\begin{proof}
Take any convex subdomain $D' \subset D$ where $u$ is Lipschitz. On this subdomain, by \eqref{weakform} we deduce that $u \in W^{2,2}$, see for instance \cite[Proposition 8.6]{GM}. Therefore, we can write \eqref{weakform} in the strong form a.e. in $D'$:
\begin{equation}\label{strongform}
\langle D^2\mathcal{L}(Du), D^2u\rangle = 0.
\end{equation}
Given two matrices $A$ being positive definite and $B$ being positive semidefinite, if $A = \sum_{i = 1}^n \lambda_ie_i\otimes e_i$ with $\lambda_i > 0$ and $\{e_i\}$ an orthonormal basis of eigenvectors for $A$, then
\[
\langle A, B\rangle = \sum_i\lambda_i \langle e_i\otimes e_i, B\rangle = \sum_i\lambda_i (Be_i,e_i) \ge 0,
\]
with equality if and only if $B = 0$. This observation together with \eqref{strongform} yields the thesis.
\end{proof}

\begin{Cor}\label{statvarif}
Let $\pi$ be a hyperplane in $\R^n$ and $\varphi:\pi \mapsto \R$ be a convex function. If $\|\delta\llbracket \Gamma_{\varphi} \rrbracket \|(B_\delta(x))=0$ for some $x \in \Gamma_{\varphi}$ and $\delta > 0$, then $\llbracket \Gamma_{\varphi} \rrbracket \llcorner B_\delta(x)$ must be a plane.
\end{Cor}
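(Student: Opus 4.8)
The plan is to recognize the stationarity of $\llbracket\Gamma_\varphi\rrbracket$ in $B_\delta(x)$ as the minimal surface equation for $\varphi$ and then invoke the rigidity in Proposition \ref{rigconv}. Identify $\pi$ with $\R^{n-1}$, let $\nu$ be the unit normal to $\pi$, and set $U := \{y \in \pi : y + \varphi(y)\nu \in B_\delta(x)\}$, an open subset of $\pi$ over which $\Gamma_\varphi \cap B_\delta(x)$ is precisely the graph of $\varphi$; note $p_\pi(x) \in U$ because $x \in \Gamma_\varphi$. Since $\varphi$ is convex it is locally Lipschitz on $U$, the area element is $\A(D\varphi) = \sqrt{1 + |D\varphi|^2}$, and testing the first variation formula of Subsection \ref{introvar} against vertical fields $g = \psi\,\nu$ with $\psi \in C^1_c(U)$, the hypothesis $\|\delta\llbracket\Gamma_\varphi\rrbracket\|(B_\delta(x)) = 0$ forces, by the area formula,
\[
\int_U \frac{(D\varphi, D\psi)}{\sqrt{1+|D\varphi|^2}}\, dy = 0, \qquad \forall\, \psi \in C^1_c(U).
\]
This is exactly $\dv(D\mathcal{L}(D\varphi)) = 0$ in the weak sense on $U$, with the minimal surface Lagrangian $\mathcal{L}(p) := \sqrt{1 + |p|^2}$.

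Next I would verify that $\mathcal{L}$ satisfies the hypotheses of Proposition \ref{rigconv}: it is $C^\infty$, and the identity $D^2\mathcal{L}(p) = (1+|p|^2)^{-1/2}\big(\Id - (1+|p|^2)^{-1}\, p\otimes p\big)$ shows its smallest eigenvalue is bounded below by $(1+R^2)^{-3/2}$ on $\{|p|\le R\}$, so $\mathcal{L}$ is uniformly convex on compact sets. Applying Proposition \ref{rigconv} (with ambient dimension $n-1$) on the connected component $U_0$ of $U$ containing $p_\pi(x)$, where $\varphi$ is convex and solves the equation, I conclude $\varphi$ is affine on $U_0$, say $\varphi|_{U_0} = \ell_0$. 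Hence the part of $\Gamma_\varphi \cap B_\delta(x)$ lying over $U_0$ is contained in the hyperplane $P_0 := \Gamma_{\ell_0}^\pi$.

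It remains to upgrade this to the assertion that \emph{all} of $\llbracket\Gamma_\varphi\rrbracket\llcorner B_\delta(x)$ is a single plane, and this is the step I expect to be the main obstacle, since \emph{a priori} $U$ need not be connected (the graph of a convex function can meet a ball in several components). Here I would exploit convexity in two ways. First, since $\ell_0$ agrees with $\varphi$ on the open set $U_0$, it is a supporting affine function: $\varphi \ge \ell_0$ on all of $\pi$ with equality on the convex contact set $C_0 := \{\varphi = \ell_0\} \supseteq U_0$; equivalently $\Gamma_\varphi$ lies in the closed half-space $\{y + a\nu : a \ge \ell_0(y)\}$ above $P_0$ and touches $P_0$ exactly along $C_0$, with $x \in P_0$. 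Second, on any other component $U_j$ of $U$ the same argument gives $\varphi|_{U_j} = \ell_j$ affine, and I would show $\ell_j = \ell_0$. If not, then $\varphi$ coincides with two distinct supporting planes on open sets, so over the codimension-one set $\{\ell_0 = \ell_j\}$ the graph develops a genuine seam (a ridge where $\varphi$ is not differentiable, or a strictly convex transition); along such a seam the varifold $\llbracket\Gamma_\varphi\rrbracket$ carries nonzero first variation, as the conormals of the two flat faces do not cancel, so the seam must avoid $B_\delta(x)$. A convexity estimate along the segment in $\pi$ from $p_\pi(x)$ to a point of $U_j$—comparing $\varphi$ against $\ell_0$ from below and against the chord through $x$ from above, and using $\varphi(p_\pi(x)) = \ell_0(p_\pi(x))$—then places the seam point on that segment inside $B_\delta(x)$, a contradiction. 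Therefore $\varphi \equiv \ell_0$ on $U$ and $\llbracket\Gamma_\varphi\rrbracket\llcorner B_\delta(x) = \llbracket P_0\rrbracket\llcorner B_\delta(x)$ is a plane. The technical heart is thus this last convexity-plus-first-variation argument excluding a second flat face; the reduction to the minimal surface equation and the appeal to Proposition \ref{rigconv} are routine.
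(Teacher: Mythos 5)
Your steps 1--4 --- the reduction to the weak minimal surface equation via vertical variations, the verification that $\mathcal{L}(p)=\sqrt{1+|p|^2}$ is uniformly convex on compact sets, and the application of Proposition \ref{rigconv} on the component $U_0$ of $U$ containing $p_\pi(x)$ --- are correct and coincide in substance with the paper's own proof, which obtains \eqref{weakform} for $\varphi$ with $\mathcal{L}=\A$ by citing \cite[Proposition 5.8]{HRT} and then invokes Proposition \ref{rigconv}. (One small repair: the field $g=\psi\,\nu$, extended cylindrically, is not compactly supported in $B_\delta(x)$; you must multiply it by a cut-off equal to $1$ near the graph over $\spt\psi$, which is legitimate since that graph is a compact subset of $B_\delta(x)$.)

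The genuine gap is your final step, and it cannot be fixed: the convexity estimate you invoke to force a seam point into $B_\delta(x)$ is false, and indeed the strengthened statement you are after (a single plane, with no connectedness assumption on $U$) is false. Take $\pi=\{x_n=0\}$, $\varphi(y)=M|y_1|$ with $M\ge 10$, and $x=(y_0,0,\dots,0,My_0)$ with $y_0>0$. The measure $\|\delta\llbracket\Gamma_\varphi\rrbracket\|$ is supported on the ridge $\{y_1=0,\ x_n=0\}$, which lies at distance $y_0\sqrt{1+M^2}\ge 10\,y_0$ from $x$, whereas the opposite face $\{x_n=-My_1,\ y_1\le 0\}$ lies at distance $2My_0/\sqrt{1+M^2}<2y_0$ from $x$. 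Hence with $\delta=3y_0$ we get $\|\delta\llbracket\Gamma_\varphi\rrbracket\|(B_\delta(x))=0$, yet $\llbracket\Gamma_\varphi\rrbracket\llcorner B_\delta(x)$ consists of nonempty pieces of \emph{two distinct} hyperplanes. In your language: the projection of the seam does lie on the segment joining $p_\pi(x)$ to a point of $U_j$, but the graph point above it is the origin, at distance $y_0\sqrt{1+M^2}$ from $x$ --- between the two components of $U$ the graph simply dives far below $B_\delta(x)$, and convexity gives no estimate preventing this. So the disconnectedness of $U$ that you correctly flagged is a genuine obstruction, not a technicality: the Corollary is only true component by component, i.e.\ each connected component of $\Gamma_\varphi\cap B_\delta(x)$ lies in a hyperplane, which is exactly what your steps 1--4 prove. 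That component-wise form is also the only way the paper ever uses the result (e.g.\ in the proof of \eqref{claim1} the conclusion is drawn for \emph{each connected component} of $\partial E_1\setminus\pi$), and the paper's own one-line proof carries the same tacit assumption, since Proposition \ref{rigconv} requires a connected domain.
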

\begin{proof}
By \cite[Proposition 5.8]{HRT}, we find that the function $u$ is stationary for the parametric area functional, that is uniformly convex on compact sets. In particular \eqref{weakform} holds for $\varphi$, choosing $\mathcal{L}=\A$. Hence Proposition \ref{rigconv} concludes the proof.
\end{proof}


\begin{prop}\label{reg}
Let $C$ be a bounded convex set. Assume that for $x \in \partial C$ there exists a ball $B_\delta(x)$ such that $\partial C\cap B_\delta(x)$ has constant mean curvature, i.e. that there exists $\lambda \in \R$ such that: 
\begin{equation}\label{integral}
\int_{\partial^* C}\langle T_x\partial C, Dg\rangle d\Haus^{n-1} = \lambda\int_{\partial^* C}(n_{C},g)d\Haus^{n-1},
\end{equation}
for all $g \in C^\infty_c(B_\delta(x),\R^n)$. Then, $\partial C \cap B_\delta(x)$ is a smooth submanifold of $\R^n$.
\end{prop}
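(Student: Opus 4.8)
The plan is to reduce the statement to interior regularity for a quasilinear elliptic PDE, and to use convexity to make that PDE \emph{uniformly} elliptic. Since being a smooth submanifold is a local property, I would fix an arbitrary $z \in \partial C \cap B_\delta(x)$ and prove that $\partial C$ is a smooth graph near $z$. By Proposition \ref{graph} applied at $z$ there are a supporting hyperplane $\pi$ with unit normal $\nu$, a radius $\delta' > 0$ with $B^\pi_{\delta',\Lip(\varphi)\delta'}(z) \subset B_\delta(x)$, and a convex Lipschitz function $\varphi : B^\pi_{\delta'}(z) \to \R$ with $\partial C \cap B^\pi_{\delta',\Lip(\varphi)\delta'}(z) = \Gamma^\pi_\varphi$. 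After a rigid motion we may assume $\pi = \{y_n = 0\}$, so that $\varphi$ is a function on an open subset of $\R^{n-1}$.

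First I would convert the constant mean curvature condition \eqref{integral} into a PDE for $\varphi$. Testing \eqref{integral} with vertical fields $g = \zeta e_n$, $\zeta \in C^\infty_c$, and denoting the induced normal perturbation by $\psi := \zeta(\cdot,\varphi(\cdot))$, the area formula --- used exactly as in the proof of Corollary \ref{corstruct} --- turns the left-hand side into the first variation of the area integrand $\A(D\varphi) = \sqrt{1+|D\varphi|^2}$, while on the right-hand side the factor $\sqrt{1+|D\varphi|^2}$ cancels against the normalization of $n_C$. Hence \eqref{integral} is equivalent to
\[
\int \frac{(D\varphi, D\psi)}{\sqrt{1+|D\varphi|^2}}\,dy = -c\int \psi\,dy \qquad \forall \psi \in C^\infty_c(B^\pi_{\delta'}(z)),
\]
the weak form of the constant mean curvature equation $\dv\big(D\varphi/\sqrt{1+|D\varphi|^2}\big) = c$, with $c$ proportional to $\lambda$. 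Since this holds across the projection of $z$, which is an interior point, the regularity established below will also rule out a priori any corner of $\partial C$ at $z$.

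The key point is that convexity supplies the a priori gradient bound. Being convex and bounded, $\varphi$ is locally Lipschitz, so $M := \|D\varphi\|_{L^\infty}$ is finite. Writing $a(p) := p/\sqrt{1+|p|^2}$, one has $Da(p) = (1+|p|^2)^{-1/2}\big(\Id - (1+|p|^2)^{-1}p\otimes p\big)$, and on $\{|p| \le M\}$ this satisfies $(1+M^2)^{-3/2}|\xi|^2 \le (Da(p)\xi,\xi) \le |\xi|^2$; thus along $\varphi$ the equation is uniformly elliptic with smooth coefficients. From here I would invoke classical quasilinear elliptic regularity: as in Corollary \ref{statvarif} one first gets $\varphi \in W^{2,2}_{\loc}$ via \cite[Proposition 8.6]{GM}; each $v := \partial_k\varphi$ is then a $W^{1,2}$ weak solution of the linear uniformly elliptic equation $\dv(Da(D\varphi)\,Dv) = 0$ with bounded measurable coefficients, so De Giorgi--Nash--Moser gives $v \in C^{0,\alpha}_{\loc}$, i.e. $\varphi \in C^{1,\alpha}_{\loc}$. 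Then $Da(D\varphi) \in C^{0,\alpha}$ and Schauder estimates upgrade $v$ to $C^{1,\alpha}_{\loc}$, i.e. $\varphi \in C^{2,\alpha}_{\loc}$; iterating the bootstrap yields $\varphi \in C^\infty$, whence $\partial C \cap B_\delta(x)$ is a smooth submanifold.

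The main obstacle is precisely the first step $C^{0,1} \to C^{1,\alpha}$: without an a priori gradient bound the prescribed mean curvature operator degenerates as $|D\varphi| \to \infty$ and its regularity theory is genuinely delicate. The whole difficulty disappears here because convexity provides the Lipschitz bound $M$, which uniformizes the ellipticity and reduces everything to standard linear theory. The only other point requiring care is the bookkeeping in passing from the varifold formulation \eqref{integral} to the analytic weak form above, which is routine once the correct normalization of $n_C$ and of the area element is taken into account.
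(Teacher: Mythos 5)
Your proof is correct and follows essentially the same route as the paper: write $\partial C$ locally as the graph of a convex (hence Lipschitz) function via Proposition \ref{graph}, recast \eqref{integral} as the divergence-form equation $\dv(D\A(D\varphi)) = \lambda$, and conclude by elliptic regularity, with convexity supplying the gradient bound that makes the equation uniformly elliptic. The paper simply compresses the two steps you carry out by hand into citations --- \cite[Lemma 7.3]{DLDPKT} for passing from the varifold identity to the PDE, and \cite[Chapter 8]{GM} for the De Giorgi--Nash--Moser plus Schauder bootstrap.
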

\begin{proof}
$\partial C$ is locally the graph of a convex function $\varphi$, hence locally Lipschitz. Hence, analogously to \cite[Lemma 7.3]{DLDPKT}, we can write equation \eqref{integral} as the elliptic PDE $\dv(D\A(D\varphi)) = \lambda$ for $\varphi$.
 Standard elliptic regularity, see \cite[Chapter 8]{GM}, yields the claim.
\end{proof}

\begin{prop}\label{varigraph}
Let $C \subset \R^n$ be a bounded convex set of dimension $n$. Let $x \in \partial C$ and $S$ a (possibly empty) closed set of dimension $d < n$.  If there exist an hyperplane $\pi\ni x$, $\delta > 0$ and a convex function $\varphi: B_\delta^\pi(x)\to \R$ such that $\partial C \cap B_{\delta,\Lip(\varphi)\delta}^\pi(x)= \Gamma_{\varphi,B_\delta^\pi(x)}^\pi$, and $\varphi$ satisfies in the sense of distributions
\begin{equation}\label{alm}
\dv(D\A(D\varphi))(y') = \lambda, \quad \forall y' \in B_\delta^\pi(x)\setminus p_\pi(S),
\end{equation}
then $\partial C$ has constant mean curvature $\lambda \in \R$ in a neighborhood of $x$ except for $S$.
Conversely, if $\partial C$ is of constant mean curvature $\lambda \in \R$ in a neighborhood of $x$ except for $S$, then for every convex function $\varphi$ for which Proposition \ref{graph} holds, $\varphi$ satisfies $\eqref{alm}$ on $B_\eps^\pi(x)\setminus p_\pi(S)$ for some sufficiently small $\eps > 0$.
\end{prop}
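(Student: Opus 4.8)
The statement is the localized, exceptional-set-aware version of the translation between the graph PDE $\dv(D\A(D\varphi))=\lambda$ and the geometric constant-mean-curvature identity \eqref{integral} that was already used in Proposition \ref{reg} (via \cite[Lemma 7.3]{DLDPKT}). The plan is to prove the two directions separately, relying on one computation pulling back the varifold first variation to the weak Euler--Lagrange operator of the area integrand $\A$. First I would normalize $\pi=\{y_n=0\}$, $\nu=e_n$, and use Proposition \ref{graph} to write $\partial C\cap B_{\delta,\Lip(\varphi)\delta}^\pi(x)=\Gamma_{\varphi,B_\delta^\pi(x)}^\pi$ with $C$ the open epigraph, so that the outer unit normal is $n_C=\A(D\varphi)^{-1}(D\varphi,-1)$, while $\A(D\varphi)=\sqrt{1+|D\varphi|^2}$ and $D\A(X)=X/\A(X)$. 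Since $\varphi$ is convex it is locally Lipschitz, hence $D\A(D\varphi)\in L^\infty$ and the operator in \eqref{alm} is the distributional mean-curvature operator.

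For the \emph{converse} direction I would test \eqref{integral} only against vertical fields, which keeps everything at the level of first derivatives. Given $\eta\in C^\infty_c(B_\eps^\pi(x)\setminus p_\pi(S))$, the field $g(y',y_n):=\eta(y')e_n$ is smooth and, for $\eps$ small and $p_\pi(S)$ closed, vanishes on a neighborhood of $S$, so it is admissible for the CMC condition away from $S$. A direct tangential-divergence computation gives, on the graph,
\[
\langle T_y\Gamma_\varphi,Dg\rangle\,\A(D\varphi)=D\A(D\varphi)\cdot D\eta,\qquad (n_C,g)\,\A(D\varphi)=-\eta,
\]
so that by the area formula \eqref{integral} pulls back to $\int_{B_\eps^\pi}D\A(D\varphi)\cdot D\eta\,dy'=-\lambda\int_{B_\eps^\pi}\eta\,dy'$, which is exactly the weak form of \eqref{alm} on $B_\eps^\pi(x)\setminus p_\pi(S)$.

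For the \emph{forward} direction I would first upgrade regularity and then use the classical identity for general test fields. On the open set $O:=B_\delta^\pi(x)\setminus p_\pi(S)$ the convex $\varphi$ solves weakly the divergence-form, uniformly elliptic (on the Lipschitz region, where $|D\varphi|$ is bounded) equation \eqref{alm}; by standard elliptic regularity, exactly as in Proposition \ref{reg} (see \cite[Chapter 8]{GM}), $\varphi\in C^\infty(O)$. Hence over $O$ the set $\partial C$ is a classical smooth hypersurface whose scalar mean curvature with respect to $n_C$ equals $\lambda$, and for such a surface the identity \eqref{integral} holds for every $g\in C^\infty_c$ whose trace on $\Gamma_\varphi$ is supported over $O$, by the classical tangential divergence theorem; this is the constant-mean-curvature property of $\partial C$ near $x$ except for $S$, once the admissible geometric test fields are matched, through the graph parametrization, with those whose base traces avoid $p_\pi(S)$.

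The main obstacle is precisely this identification of the two exceptional sets: the geometric side excludes $S\subset\R^n$, while the PDE side excludes its projection $p_\pi(S)\subset\pi$, and one must check that under the graph parametrization a field localized away from $S$ corresponds to base data supported in $O$, so that no first-variation contribution is concealed over $p_\pi(S)$. I would stress that no removability across $S$ is being asserted --- the proposition is an equivalence on the \emph{open} complements --- so this step is bookkeeping rather than a capacity estimate; everything analytic is confined either to the clean vertical-variation computation (valid for merely Lipschitz $\varphi$) or to the elliptic-regularity-plus-classical-identity argument on the region $O$ where $\varphi$ is already smooth, the only place the Lipschitz-only regime survives being over $p_\pi(S)$, which both formulations exclude.
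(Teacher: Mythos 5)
Your write-up is sound in conception and supplies exactly the computation that the paper outsources: the paper's entire proof of Proposition \ref{varigraph} is the one-line citation to \cite[Lemma 7.3]{DLDPKT}, i.e.\ the Lipschitz-graph translation between the varifold first variation and the weak operator $\dv(D\A(D\varphi))$. Your converse direction is correct and complete: for $\eta\in C^\infty_c(B^\pi_\eps(x)\setminus p_\pi(S))$ the (vertically cut off) field $g=\eta\, e_n$ has support inside the cylinder over $\spt(\eta)$, which is disjoint from $p_\pi^{-1}(p_\pi(S))\supset S$, so it is admissible for the CMC identity away from $S$, and your two pullback identities $\langle T_y\Gamma_\varphi,Dg\rangle\,\A(D\varphi)=(D\A(D\varphi),D\eta)$ and $(n_C,g)\,\A(D\varphi)=-\eta$ are exactly right, giving the weak form of \eqref{alm}. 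Your forward direction (elliptic regularity on $O=B^\pi_\delta(x)\setminus p_\pi(S)$, then the classical first-variation identity for smooth CMC graphs) is a legitimate alternative to working directly at the Lipschitz level as in \cite{DLDPKT}.

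However, the step you dismiss as ``bookkeeping'' is, for the forward direction, a genuine gap --- indeed it is the only place where the proposition needs an implicit hypothesis on $S$. A field $g$ compactly supported away from $S$ does \emph{not} in general pull back to base data supported in $O$: it may be nonzero at graph points $(y',\varphi(y'))$ with $y'\in p_\pi(S)$ but $(y',\varphi(y'))\notin S$, and over such $y'$ hypothesis \eqref{alm} says nothing. Without an extra assumption the implication is false: take $n=2$, $\pi$ the horizontal axis, $\varphi(t)=|t|$, $\lambda=0$, and $S$ a single point on the vertical axis far above the corner. Then \eqref{alm} holds on $B^\pi_\delta(x)\setminus p_\pi(S)=B^\pi_\delta(x)\setminus\{0\}$, yet $\llbracket \Gamma_\varphi\rrbracket$ has a singular first variation concentrated at the corner, which lies away from $S$, so $\partial C$ is not of constant mean curvature ``except for $S$''. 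The fix is the assumption implicit in the statement and valid in every application in the paper: the part of $S$ seen by the window lies on the graph, i.e.\ every $y'\in p_\pi(S)\cap B^\pi_\delta(x)$ satisfies $(y',\varphi(y'))\in S$. Under this assumption, continuity of $\varphi$ forces the pulled-back integrand of any field vanishing near $S$ to vanish in a neighborhood of $p_\pi(S)$, so its support is a compact subset of $O$ and your regularity-plus-classical-identity argument closes. (The paper itself only ever invokes the forward implication with $S=\emptyset$, where the issue is vacuous, and the converse with $S\subset\partial C$; its one-line proof glosses over the same point.)
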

\begin{proof}
This readily follows from \cite[Lemma 7.3]{DLDPKT}.
\end{proof}

\begin{lemma}\label{div}
Let $\Phi : B_1\subset\R^d \to \R^d$ be continuous. Suppose that there exists a convex set $S \subset B_1$ with $\Haus^{d}(S) = 0$ and $f \in L^1_{\loc}(B_1)$ such that
$\dv(\Phi) = f$
in the sense of distributions in $B_1\setminus S$. Then, $\dv(\Phi) = f$ in the sense of distributions in $B_1$.
\end{lemma}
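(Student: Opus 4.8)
The plan is to interpret the hypothesis as saying that the distribution $T := \dv\Phi - f$ vanishes on $B_1\setminus S$, hence $\spt T\subset S$, and to prove $T=0$. First I would exploit convexity of $S$: by the notion of dimension recalled in Section \ref{s:not}, a convex set containing $d+1$ affinely independent points has positive $\Haus^d$ measure, so $\Haus^d(S)=0$ forces $\dime S\le d-1$ and thus $S$ is contained in a hyperplane. After a rotation and translation we may assume $S\subset H:=\{x\in\R^d: x_d=0\}$. Replacing $S$ by its closure (still convex, still contained in $H$, and this only strengthens the hypothesis) we may also take $S$ closed, so that $B_1\setminus S$ is open and the hypothesis applies to every $\varphi\in C^\infty_c(B_1\setminus S)$.

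The main idea is to approximate an arbitrary $\psi\in C^\infty_c(B_1)$ by test functions vanishing near $H$. I would fix $\chi\in C^\infty(\R)$ with $0\le\chi\le 1$, $\chi\equiv 0$ on $[-1,1]$ and $\chi\equiv 1$ outside $[-2,2]$, and set $\eta_\eps(x):=\chi(x_d/\eps)$. Since $\eta_\eps$ vanishes on the open slab $\{|x_d|<\eps\}\supset S$, the product $\psi\eta_\eps$ lies in $C^\infty_c(B_1\setminus S)$, so the hypothesis gives
\[
\int_{B_1}\Phi\cdot\nabla(\psi\eta_\eps)\,dx=-\int_{B_1}f\,\psi\,\eta_\eps\,dx.
\]
Expanding $\nabla(\psi\eta_\eps)=\eta_\eps\nabla\psi+\psi\,\nabla\eta_\eps$ splits the left-hand side into two pieces. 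Because $\eta_\eps\to 1$ pointwise off the null set $H$ and $|\eta_\eps|\le 1$, dominated convergence yields $\int_{B_1}\Phi\cdot\eta_\eps\nabla\psi\,dx\to\int_{B_1}\Phi\cdot\nabla\psi\,dx$ and $\int_{B_1}f\psi\eta_\eps\,dx\to\int_{B_1}f\psi\,dx$; here $\Phi\cdot\nabla\psi$ and $f\psi$ are in $L^1$ because $\Phi$ is continuous, $\psi\in C^\infty_c$ and $f\in L^1_{\loc}$.

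The crux is the remaining term, involving $\nabla\eta_\eps=\eps^{-1}\chi'(x_d/\eps)\,e_d$. Writing $x=(x',x_d)$ and substituting $x_d=\eps s$,
\[
\int_{B_1}\Phi_d\,\psi\,\tfrac1\eps\chi'(x_d/\eps)\,dx=\int_{\R^{d-1}}\int_{\R}\Phi_d(x',\eps s)\,\psi(x',\eps s)\,\chi'(s)\,ds\,dx'.
\]
Since $\Phi$ and $\psi$ are continuous, the integrand converges pointwise to $\Phi_d(x',0)\,\psi(x',0)\,\chi'(s)$ and is dominated by $\big(\sup_{\spt\psi}|\Phi|\big)\|\psi\|_\infty|\chi'(s)|\,\mathbf 1_{K'}(x')$, with $K'$ the compact projection of $\spt\psi$ onto $H$; dominated convergence then sends this term to $\big(\int_{\R^{d-1}}\Phi_d(x',0)\psi(x',0)\,dx'\big)\big(\int_{\R}\chi'(s)\,ds\big)$. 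The point is that $\int_\R\chi'=\chi(+\infty)-\chi(-\infty)=0$ by the symmetry of the cutoff, so this term vanishes in the limit. Collecting the three limits gives $\int_{B_1}\Phi\cdot\nabla\psi\,dx=-\int_{B_1}f\psi\,dx$ for every $\psi\in C^\infty_c(B_1)$, which is the claim.

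I expect the only delicate step to be this last term: one must choose the cutoff so that $\int_\R\chi'=0$ and then use the \emph{continuity} of $\Phi$ to pass to the limit. Continuity is genuinely essential here: for a merely bounded $\Phi$ whose normal component jumped across $H$, the same computation would leave a nonzero $\delta'$-type contribution and the conclusion would fail.
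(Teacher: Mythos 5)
Your proof is correct. Both you and the paper start from the same reduction --- convexity together with $\Haus^d(S)=0$ forces $S$ into a hyperplane $H$ --- and both rest on the same cancellation: by continuity of $\Phi$, the fluxes across the two sides of $H$ agree and therefore cancel. The implementations, however, are dual to one another. The paper regularizes the \emph{field}: it mollifies $\Phi$ and $f$ so that $\dv(\Phi_\eps)=f_\eps$ holds classically away from an $\eps$-slab around $H$, applies the divergence theorem on $B_{1-\eps}$ minus that slab, and is left with two explicit boundary integrals over the faces $\{x_d=\pm\eps\}$, which cancel in the limit by continuity of $\Phi$ and the $L^1_{\loc}$ convergence of the mollifications. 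You instead regularize the \emph{truncation}: you leave $\Phi$ and $f$ untouched, multiply the test function by the cutoff $\eta_\eps$, and the would-be boundary terms become the commutator term with $\nabla\eta_\eps$, whose vanishing is packaged into the identity $\int_\R\chi'=0$ after the scaling $x_d=\eps s$ (the contributions of $s>0$ and $s<0$ are exactly the two one-sided fluxes, with opposite signs). Your route is slightly more economical --- no mollification, no Gauss--Green on a truncated domain, only dominated convergence applied three times --- while the paper's version makes the one-sided flux integrals geometrically explicit. Both arguments invoke the continuity of $\Phi$ at precisely the same point, and your closing remark identifies correctly why it is indispensable: for $\Phi=\sign(x_d)\,e_d$ and $f=0$ one has $\dv(\Phi)=2\Haus^{d-1}\llcorner H\neq 0$, so the conclusion genuinely fails for bounded fields with a jump across $H$.
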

\begin{proof}
Since $S$ is convex and $\Haus^{d}(S) = 0$, $S$ is contained in a hyperplane $\pi$, that up to a rotation we can suppose to be $\pi = \{y \in \R^d: y_d = 0\}$.  Let $\eps \in \left(0,\frac{1}{2}\right)$, consider $\rho_\eps \in C^\infty_c(B_{\eps/2})$ to be the usual radial mollifier. Denote $\Phi_\eps=\Phi \ast \rho_\eps$, $f_\eps=f \ast \rho_\eps$ and observe that $\dv(\Phi_\eps)(x) = f_\eps(x)$ for every  $x \in B_{1-\eps} \setminus B_\eps(\pi)$. Denote $D_\eps^+ = \{y \in B_1: y_d \ge \eps\}$, $D_\eps^- = \{y \in B_1: y_d \le -\eps\}$, $\sigma_\eps^+ = \{y \in B_1: y_d = \eps\}$ and $\sigma_\eps^- = \{y \in B_1: y_d = -\eps\}$. Fix any $\psi \in C^\infty_c(B_1)$. Then, by the divergence theorem, for all $\eps$ small enough it holds:
\begin{align*}
&\int_{B_{1-\eps}\setminus B_\eps(\pi)}(\Phi_\eps(x),D\psi(x))dx +  \int_{B_{1-\eps}\setminus B_\eps(\pi)}f_\eps(x)\psi(x)dx=\int_{B_{1-\eps}\setminus B_\eps(\pi)}\dv(\psi(x)\Phi_\eps(x))dx\\
&= \int_{D_\eps^+}(\dv(\psi(x)\Phi_\eps(x))dx + \int_{D_\eps^-}(\dv(\psi(x)\Phi_\eps(x))dx= -\int_{\sigma_\eps^+}(\Phi_\eps,e_d)\psi d\Haus^{n-1} +  \int_{\sigma_\eps^-}(\Phi_\eps,e_d)\psi d\Haus^{n-1},
\end{align*}
where $e_d$ is the $d$-th element of the canonical basis of $\R^d$. By the strong convergence in  $L^1_{\loc}(B_1)$ of $\Phi_\eps$ and $f_\eps$ respectively to $\Phi$ and $f$, we see that
\[
\int_{B_{1-\eps}\setminus B_\eps(\pi)}(\Phi_\eps(x),D\psi(x))dx +  \int_{B_{1-\eps}\setminus B_\eps(\pi)}f_\eps(x)\psi(x)dx
\to
\int_{B_1}(\Phi(x),D\psi(x))dx +  \int_{B_1}f(x)\psi(x)dx.
\]
Moreover by continuity of $\Phi$,
\[
-\int_{\sigma_\eps^+}(\Phi_\eps,e_d)\psi d\Haus^{n-1} +  \int_{\sigma_\eps^-}(\Phi_\eps,e_d)\psi d\Haus^{n-1} \to 0, \text{ as }\eps \to 0^+,
\]
which concludes the proof.
\end{proof}

\subsection{A capillarity theorem}\label{s:captem}
The main result of this section is Theorem \ref{pointrigid}, which is a rigidity result for capillary surfaces lying in a wedge. To prove Theorem \ref{pointrigid}, we will obtain some intermediate results: Proposition \ref{pointsym}, Lemma \ref{grafo} and Lemma \ref{MIERSE}.

\begin{prop} \label{pointsym}
Consider two planes in $\R^3$, $\pi$ and $\pi'$, intersecting in a line $L$. Let $\Sigma$ be one of the open, convex cylindrical sectors of $\R^3 \setminus (\pi \cup \pi')$ with opening angle $\alpha < \pi$. Suppose $E$ is an open, non-empty, bounded and convex set with $E \subset \Sigma$ with the following properties:
\begin{enumerate}
\item $\partial E \cap L = \{p_0\}$;
\item $\mathcal{H}^2(\partial E \cap \pi) \neq 0 \neq \mathcal{H}^2(\partial E \cap \pi')$ and  $\partial E$ intersects $\pi\setminus L$ and $\pi' \setminus L$ with constant angle of $120$ degrees;
\label{anglecond}
\item $\partial E \cap \Sigma$ has constant mean curvature.\label{cmccond}
\end{enumerate}
Then, $E$ is symmetric with respect to the plane $\pi''$ orthogonal to $L$ and passing through $p_0$.
\end{prop}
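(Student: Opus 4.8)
The plan is to run the Alexandrov moving plane method using planes orthogonal to the edge line $L$, exploiting the fact that reflections in such planes preserve both the wedge $\Sigma$ and the $120$-degree contact angle, and then to use the single contact point $p_0$ to pin the resulting symmetry plane exactly at $\pi''$. First I would normalize coordinates so that $p_0 = 0$ and $L = \R e$ for a unit vector $e$, whence $\pi'' = \{x : (x,e) = 0\}$. For $t \in \R$ set $\pi_t := \{x : (x,e) = t\}$ and let $R_t$ be the reflection across $\pi_t$. Since $\Sigma$ is a cylinder over a planar convex sector in the plane orthogonal to $e$, each $R_t$ maps $\Sigma$ onto itself and fixes each face $\pi,\pi'$ setwise; being an isometry, it also preserves the contact angle. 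Thus any reflected copy of $\partial E \cap \Sigma$ again meets $\pi \setminus L$ and $\pi' \setminus L$ at $120$ degrees. By hypothesis (3) and Proposition \ref{reg}, $\partial E \cap \Sigma$ is a smooth constant mean curvature surface in the interior, and away from $L$ the capillary regularity up to the faces will allow the use of boundary maximum principles.

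Next I would carry out the sweep. Let $b := \max\{(x,e): x \in \overline{E}\}$ and $a := \min\{(x,e): x \in \overline{E}\}$, and for $t < b$ define the cap $C_t := E \cap \{(x,e) > t\}$. By convexity, for $t$ close to $b$ the cap is a small graph and $R_t(C_t) \subset E$. Decreasing $t$, set $t^* := \inf\{ t : R_\tau(C_\tau) \subseteq \overline{E}\ \text{for all}\ \tau \in (t,b)\}$. Since the reflected cap $R_t(C_t)$ occupies the slab $\{2t - b < (x,e) < t\}$, as $t \to a^+$ it drops below the level $a$ and escapes $\overline{E}$; hence $t^* \in (a,b)$, and by continuity $R_{t^*}(C_{t^*}) \subseteq \overline{E}$ with a first contact occurring at $t^*$.

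At the critical value $t^*$, the first-contact configuration is of one of three types: an interior tangency between the reflected surface and $\partial E$, an orthogonal tangency at a point of $\partial E \cap \pi_{t^*}$, or a contact at a point lying on one of the faces $\pi,\pi'$. In all cases $R_{t^*}(\partial E \cap \{(x,e) > t^*\})$ and $\partial E$ are two surfaces of the \emph{same} constant mean curvature lying to one side of each other. In the first two cases the strong maximum principle, respectively the Hopf boundary point lemma, forces them to coincide near the contact point; in the third case both surfaces meet the face at the same $120$-degree angle, so a Serrin-type corner (boundary) maximum principle adapted to the capillary condition again forces local coincidence. Unique continuation and connectedness then propagate the coincidence, so $\partial E$, and hence $E$, is symmetric with respect to $\pi_{t^*}$.

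To conclude I would pin the plane: symmetry of $E$ across $\pi_{t^*}$ implies that $\overline{E} \cap L$ is invariant under the reflection $s e \mapsto (2t^* - s)e$ of $L$ about the point $t^* e$. By hypothesis (1) we have $\overline{E} \cap L = \{p_0\} = \{0\}$, and a single point is fixed by this reflection only if $t^* = 0$; therefore $\pi_{t^*} = \pi_0 = \pi''$, which is exactly the asserted symmetry (and, incidentally, forces $a < 0 < b$, so $E$ genuinely straddles $\pi''$). The main obstacle is the boundary analysis at the capillary contact points on the faces: one must guarantee sufficient regularity of $\partial E \cap \Sigma$ up to $\pi$ and $\pi'$ away from $L$, and establish a corner maximum principle compatible with the fixed $120$-degree contact angle, so that the reflected and original surfaces can be compared there. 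The interior tangency cases and the final single-point pinning argument are, by contrast, comparatively routine.
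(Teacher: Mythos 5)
Your overall strategy --- Alexandrov reflection across planes orthogonal to $L$, which preserve the wedge, the faces and the $120$-degree contact angle, followed by pinning the symmetry plane using the single point $p_0$ --- is the same as the paper's, and most of your steps (interior tangency via the strong maximum principle, tangency on the moving plane via Hopf, face contact away from $L$ via the capillary version of Serrin's corner lemma as in \cite[Theorem 4.1.16]{LOP}) are sound. However, there is a genuine gap at the heart of the argument: your trichotomy of first-contact configurations omits the case in which the reflected surface touches the original surface \emph{only at the vertex $p_0$ itself}, and this is exactly the case that must be confronted. Note first that for $t^*>0$ a contact point on $L$ is impossible (it would be a second point of $\partial E\cap L$), so by your pinning argument $t^*>0$ leads to a contradiction and one is forced to $t^*=0$; but at $t^*=0$ the reflected and original surfaces automatically meet at $p_0\in\pi_0$, and it can perfectly well happen that this is their \emph{only} contact point. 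At $p_0$ none of your tools applies: the surface $\partial E\cap\Sigma$ is not known to be smooth up to $p_0$ (and in the relevant situation it genuinely is not --- see the blow-up \eqref{blowver} in Theorem \ref{pointrigid}, where the tangent cone at $p_0$ consists of two half-planes), and $p_0$ lies on the edge $L$ where the two faces meet, so there is no single smooth supporting face and the $120$-degree condition is not even imposed there. Hopf's lemma and Serrin's corner lemma both require regularity up to the contact point, so "local coincidence" cannot be deduced this way, and the proof does not close.

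The paper's proof is organized precisely around this difficulty. After showing $t_0=0$, it assumes $S_0^*\neq S_0^-$, so that the two surfaces meet only at $p_0$, and then invokes Lemma \ref{grafo} (a purely convexity-based statement, needing no regularity at $p_0$) to write $S_0^-$ and $S_0^*$ near $p_0$ as graphs of functions $f\le v$ over the cross-sectional triangle $T_\delta\subset\pi''$, both solving the capillary system \eqref{sysun} with $f(0,0)=v(0,0)=0$ and $f<v$ on $T_\delta\setminus\{0\}$. Replacing $f$ by $f_\eps=f+\eps$, which still solves \eqref{sysun}, stays below $v$ on the outer arc of $\partial T_\delta$ but exceeds $v$ at the origin, contradicts the comparison principle \cite[Theorem 5.1]{FINB}; this yields $f=v$ and hence the symmetry, with no smoothness ever required at $p_0$. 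You would need to supply an argument of this type (or some other corner-capable comparison principle) to repair your case (iii). A secondary, fixable point: your sweep starts "near $b$ where the cap is a small graph", but the top of $E$ could a priori be the singular point $p_0$ itself; as in the paper, one should start the sweep from the side of $E$ whose extremal plane does not contain $p_0$.
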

We will first need the following technical result.
\begin{lemma}\label{grafo}
Consider the assumptions of Proposition \ref{pointsym}, and suppose without loss of generality that $p_0 = (0,0,0)$, $L = \{(x,0,0): x \in \R\}$, $\pi'' = \{(0,y,z): y,z \in \R\}$. Let $S := \overline{\partial E\cap \Sigma}$ and for every $r>0$ let
\[
S_r := S \cap \{(x,y,z) \in \R^3: y^2 + z^2 \le r^2\} \qquad \mbox{and} \qquad T_r := B_r^{\pi''}(0)\cap \overline \Sigma,
\]
Then, there exists $\delta > 0$ such that $S_\delta$ is the union of the graphs of two functions $f,g$, defined on $T_\delta$ with $f(0,0) = g(0,0) = 0$, $f \le g$, $f$ convex and $g$ concave.
\end{lemma}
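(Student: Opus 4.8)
The plan is to realize $S_\delta$ as the lower and upper boundary of the convex body $E$ when viewed along the edge $L$, i.e. to project everything onto $\pi''$ in the $x$-direction. Write $p=p_{\pi''}$ for the orthogonal projection $(x,y,z)\mapsto(0,y,z)$, whose fibers are the lines parallel to $L$. For every $(0,y,z)$ in the shadow $p(\overline E)$ the fiber $\overline E\cap p^{-1}(0,y,z)$ is a nonempty compact interval by convexity of $E$, so I set
\[
f(y,z):=\min\{x:(x,y,z)\in\overline E\},\qquad g(y,z):=\max\{x:(x,y,z)\in\overline E\}.
\]
A standard midpoint argument using convexity of $\overline E$ over the convex domain gives at once that $f$ is convex, $g$ is concave, and $f\le g$. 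For the corner value, note that $E\subset\Sigma$ is open and $L\subset\partial\Sigma$, so $E\cap L=\emptyset$ and hence $\overline E\cap L\subseteq\partial E\cap L=\{p_0\}=\{(0,0,0)\}$ by hypothesis (1); since the fiber over $(0,0)$ is exactly $L$, this forces $f(0,0)=g(0,0)=0$.

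The crux is to show that the sector $T_\delta$ lies in the shadow $p(\overline E)$ for $\delta$ small, so that $f,g$ are genuinely defined on all of $T_\delta$. This is the only step using hypothesis (2): since $\mathcal{H}^2(\partial E\cap\pi)\neq0\neq\mathcal{H}^2(\partial E\cap\pi')$, the contact sets $\overline E\cap\pi$ and $\overline E\cap\pi'$ are two-dimensional convex subsets of the two half-planes bounding $\Sigma$, each containing $p_0$ on the edge $L$. I would pick $q_1$ in the relative interior of $\overline E\cap\pi$ and $q_2$ in the relative interior of $\overline E\cap\pi'$, both off $L$; then $\co\{p_0,q_1,q_2\}\subseteq\overline E$, and its image is the planar triangle $\co\{0,p(q_1),p(q_2)\}$. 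Because $p(q_i)$ lies on the ray $\pi_i\cap\pi''$ at positive distance from $0$, the two sides of this triangle issuing from $0$ run exactly along the two edges of the sector $T$; hence, for $\delta$ smaller than the distance from $0$ to the opposite side, $T_\delta\subseteq\co\{0,p(q_1),p(q_2)\}\subseteq p(\overline E)$. I expect this to be the main obstacle, as it is where the nondegeneracy of the contact regions and the constraint $\alpha<\pi$ (convexity of the sector) are essential.

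Finally I would verify that $S_\delta=\Gamma_f\cup\Gamma_g$ over $T_\delta$. Over an interior point $(0,y,z)$ of $T_\delta$ the fiber meets neither $\pi$ nor $\pi'$, so the two endpoints $(f,y,z)$ and $(g,y,z)$ lie in the open wedge $\Sigma$, hence in $\partial E\cap\Sigma\subseteq S$; conversely any fiber point with $f<x<g$ lies in $\intt(E)$ and is therefore not in $\partial E$. Over an edge point the fiber lies in a wall, and the open interval $(f,g)$ maps into the relative interior of a contact region, where $\partial E$ coincides locally with the wall and thus does \emph{not} belong to $\overline{\partial E\cap\Sigma}$; only the endpoints survive, and these lie on $\partial_\pi(\overline E\cap\pi)\subset S$. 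Combining the two cases yields $S_\delta=\Gamma_f\cup\Gamma_g$ with the stated properties. I note that the angle and constant-mean-curvature hypotheses (2)--(3) are not used in this lemma: only convexity of $E$, the confinement $E\subset\Sigma$, the single-point contact with $L$, and the nondegeneracy of the two contact sets enter.
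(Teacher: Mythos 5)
Your construction of $f$ and $g$ as the lower and upper endpoints of the fibers of $\overline E$ in the $L$-direction, and your triangle argument showing $T_\delta\subseteq p_{\pi''}(\overline E)$, are both fine and in fact parallel what the paper does (the paper also uses $\co(T_\delta^1\cup T_\delta^2)$ to get the shadow). But there is a genuine gap at the step you dismiss in one line: \emph{``any fiber point with $f<x<g$ lies in $\intt(E)$ and is therefore not in $\partial E$.''} This is false for a general convex body. If one interior point of the chord $[(f,y,z),(g,y,z)]$ lies in $\partial E$, then by convexity the \emph{entire} chord lies in $\partial E$; nothing in convexity alone forbids $\partial E\cap\Sigma$ from containing a segment parallel to $L$ (think of a ruled or cylindrical portion of the boundary), and in that case $S_\delta$ would contain whole fibers, not just the two graph points, and your identity $S_\delta=\Gamma_f\cup\Gamma_g$ collapses. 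Excluding such segments is precisely the heart of the paper's proof (the claim \eqref{ab}): on the edges $T_\delta^1\cup T_\delta^2$ one needs that the tangents of the contact curves $\partial E\cap\pi$, $\partial E\cap\pi'$ are never parallel to $L$ near $p_0$ (the paper's choice of $d_2$), and in the interior one argues by contradiction along $\delta_n\to 0$: at an interior point of a hypothetical segment parallel to $L$, the tangent (or supporting) plane contains the direction of $L$, $E$ lies on one side of it, and passing to the limit produces a plane through $L$ with $\overline E$ on one side, which forces $\partial E\cap\pi=\{p_0\}$ or $\partial E\cap\pi'=\{p_0\}$, contradicting assumption (2) of Proposition \ref{pointsym}. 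Note that this argument only yields the conclusion for $\delta$ \emph{sufficiently small}; your $\delta$, chosen solely from the shadow/triangle argument, is not small enough for any such exclusion.

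This also shows your closing remark is mistaken: the lemma does use hypotheses (2)--(3) beyond the nondegeneracy of the contact sets. The paper's proof invokes the smoothness of $S=\overline{\partial E\cap\Sigma}$ away from $p_0$ (which comes from the constant mean curvature and the $120$-degree angle condition via Proposition \ref{reg} and the regularity analysis of Subsection \ref{subsubreg}), both to control the contact curves near $p_0$ and to run the blow-up at points of a putative boundary segment. Two smaller points: (i) your claim that the endpoints over edge points lie in $S$ (i.e.\ $\partial_\pi(\overline E\cap\pi)\subset\overline{\partial E\cap\Sigma}$), and that the relative interior of the contact set is disjoint from $S$, are both true but themselves require a supporting-plane or invariance-of-domain argument, not just the phrase ``$\partial E$ coincides locally with the wall''; (ii) once segments are excluded, your fiber-of-$\overline E$ formulation and the paper's fiber-of-$S_\delta$ formulation are equivalent, so the overall architecture is the same --- what is missing is the one substantive step.
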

\begin{proof}
We will be using the fact that $S$ is a smooth manifold with smooth boundary, excluding $p_0 = 0$. Indeed, it is a smooth surface in the interior by Proposition \ref{reg}, while smoothness up to the boundary (except for $p_0$) will be shown in Subsection \ref{subsubreg}. Define $ d_1 = \max_{p \in \partial E}\dist(p, L)$, and let $\bar p \in \partial E$ be a point that realizes the maximum. Next, parametrize the sets $\partial E\cap \pi$,  $\partial E\cap \pi'$ respectively with parametrizations $\sigma_i:[0,1] \to \R^3$ which are simple, Lipschitz, smooth in $(0,1)$ and $\sigma_i(0) =\sigma_i(1) = p_0 = 0$, for all $i=1,2$. For every $r>0$, we denote $D_{r} := \{p \in \overline \Sigma: \dist(p,L) \le r\}$. By convexity of $\partial E \cap \pi$ and $\partial E \cap \pi'$, there exists $d_2>0$ such that $\sigma_i'(t)$ is not parallel to $L$ for every $t \in (0,1)$ such that $\sigma_i(t) \in D_{d_2}$. 
We claim we can choose $0 <\delta \le \min\{\frac{d_1}{2}, \frac{d_2}{2}\}$ such that 
\begin{equation}\label{equalTdelta}
p_{\pi''}(S_\delta) = D_{\delta}\cap \pi'' = T_\delta.
\end{equation}
Indeed, $p_{\pi''}(S_{\delta}) \subseteq T_\delta$ is immediate from the definitions. To see the reverse inclusion, simply notice that by our choice of $d_2$,
$
p_{\pi''}(\sigma_i\cap D_{\delta}) =T_{\delta}^i,
$
for $i=1,2$, 
where $T_{\delta}^1:=T_{\delta}\cap \pi$ and $T_{\delta}^2:=T_{\delta}\cap \pi'$.
 Furthermore, by boundedness and convexity of $E$ and the linearity of $p_{\pi''}$, we deduce that $p_{\pi''}(S_{\delta}) = p_{\pi''}(\overline E \cap D_{\delta})$ is convex.
Thus, the triangle $\co(T_{\delta}^1\cup T_{\delta}^2) \subseteq p_{\pi''}(S_\delta)$. We can choose $\delta' < \delta$ such that
$
T_{\delta'} \subset \co(T_{\delta}^1\cup T_{\delta}^2).
$
Then, up to replace $\delta$ with $\delta'$, \eqref{equalTdelta} holds. Finally, we show that, if $\delta$ is sufficiently small, for all $x \in T_\delta$, we have that 
\begin{equation}\label{ab}
p_{\pi''}^{-1}(x) \cap S_{\delta} = \{a(x),b(x)\}
\end{equation}
with the first components satisfying $a_1(x) \le b_1(x)$. Notice that we are not excluding that $a(x) = b(x)$. We already know that  $p_{\pi''}^{-1}(x) \cap S_{\delta}$ is non-empty, and, due to the convexity of $E$, to prove \eqref{ab} we only need to show that it is not a non-trivial segment. If $x \in T_\delta^1\cup T_\delta^2$ and $p_{\pi''}^{-1}(x) \cap S_{\delta}$ were a non-trivial segment, it would be parallel to $L$, against our choice of $d_2$. Now suppose by contradiction that for a sequence $\delta_n$ with $\delta_n < \delta$ and  $\delta_n \to 0$ we have $x \in T_{\delta_n} \setminus T_{\delta_n}^1\cup T_{\delta_n}^2$ and $p_{\pi''}^{-1}(x) \cap S_{\delta_n}$ is a non-trivial segment. Then $\partial E \cap \Sigma \cap D_{\delta_n}$ contains a non-trivial segment parallel to $L$. Let $y_n$ be any point in the interior of that segment. Due to the smoothness of the surface, the blow-up of $\partial E$ at $y_n$ must be a plane $\pi_n\supset L$. Furthermore, by convexity, $E$ must lie in one of the two open half-spaces $C_n^1, C_n^2$ of $\R^3 \setminus \{\pi_n + y_n\}$. We may always suppose $E \subset C_n^1$. Notice that $0 \in C_n^1$, indeed if $0 \in \partial C_n^1 = \pi_n + y_n$, then $\pi_n + y_n$ would cut $\overline\Sigma$ into two halfs, one containing $\pi\cap \overline \Sigma$ and one containing $\pi'\cap \overline \Sigma$. As $E$ needs to be contained in one of the two halfs, this would go against Assumption \eqref{anglecond}. Thus, $0 \in C^1_n$. Let 
$
L_n^1 := (\pi_n + y_n)\cap \pi \subset \partial \Sigma$ and $L_n^2 := (\pi_n + y_n)\cap \pi' \subset \partial \Sigma.
$
Then, either
\[
\liminf_{n \to \infty}\dist(L_n^1,L) = 0 \text{ or } \liminf_{n \to \infty}\dist(L_n^2,L) = 0.
\]
Since
\[
E \subset \bigcap_{n} C_n^1,
\]
we deduce that either $\partial E\cap \pi=\{p_0\}$ or $\partial E\cap \pi'=\{p_0\}$. This yields once again a contradiction with Assumption \eqref{anglecond}. This shows \eqref{ab} up to choosing $\delta > 0$ small enough. We can then define $f$ and $g$ in the following way. Given $(0,y,z) \in T_\delta$, consider the only (possibily identical) vectors $a = (a_1,y,z), b = (b_1,y,z)$ with
\[
p_{\pi''}^{-1}((0,y,z)) \cap S_{\delta} = \{a,b\}
\]
and $a_1 \le b_1$. Then, define $f(y,z) = a_1$ and $g(y,z) = b_1$. Of course, $f \le g$ on $T_\delta$. To see that $f$ is convex, take any point $p \in \intt T_\delta$ with $B_\rho(p) \subset \intt T_\delta$ and notice that $\Epi(f)\cap (B_\rho(p)\times L)$ is a convex set since $E$ is convex. The proof that $g$ is concave is analogous. 
\end{proof}
\begin{proof}[Proof of Proposition \ref{pointsym}]
We exploit Alexandrov's moving plane method as in \cite[Theorems 4.1.1-4.1.16]{LOP}. After a rotation and a translation, we can suppose $L = \{(x,0,0): x \in \R\}$, $p_0 = 0$ and thus $\pi'' = \{(0,y,z): y,z \in \R\}$. For $t \in \R$, define $\pi''_t = \{(x,y,z) \in \R^3: x = t\}$, so that $\pi''_0 = \pi''$. For large $|t|$, $\pi''_t$ does not intersect $\overline E$. Let $t_2 < 0 < t_1$ be the minimum and the maximum $t$ for which $\pi''_t\cap \overline E\neq \emptyset$. Then, either $p_0\not \in \pi''_{t_1}$ or $p_0\not \in \pi''_{t_2}$, as otherwise $E$ would not be open. Suppose, without loss of generality, that $p_0\not \in \pi''_{t_1}$. Let $S := \overline{\partial E \cap \Sigma}$. With the notation of \cite{LOP}, we define for $A \subset \R^3$ 
\[
A^+_t := A \cap\{(x,y,z) \in \R^3: x \ge t\}, \quad A^-_t := A \cap\{(x,y,z) \in \R^3: x \le t\}
\]
and finally $A_t^*$ as the reflection of $A_t^+$ with respect to $\pi''_t$. First we claim that there exists $\eps >0$ such that for $t \in (t_1-\eps, t_1)$, the following two properties hold:
\begin{enumerate}[\quad (a)]
\item $S_t^*\setminus \partial S_t^* \subset E$, where here $\partial$ is the boundary in the manifold sense;\label{firlast}
\item $S_t^+$ is a graph on $\pi_t''$.\label{selast}
\end{enumerate}
We define the validity of both \eqref{firlast}-\eqref{selast} as Property $(P_t)$. To show the claim, we will use the fact that $S$ is a smooth manifold with smooth boundary, excluding $p_0 = 0$, as in Lemma \ref{grafo}. With this observation at hand, the claim is easily proved. Indeed, at any point $p' \in \pi''_{t_1}\cap S$, the tangent plane to $S$ must be $\pi''_{t_1}$ by the Lagrange Multiplier Theorem. In particular, we deduce that $\pi''_{t_1}\cap S$ does not contain any point of $\partial S \setminus \{p_0\}$ due to the constant angle condition \eqref{anglecond}. For a small $\eps > 0$ and $t \in (t_1-\eps,t_1)$, Property $(P_t)$ is now a consequence of embeddedness of $S$, which in turn is a consequence of the convexity of $E$, see \cite[Theorems 4.1.1]{LOP} for details. Now we can define
$
t_0 := \inf\{t: (P_s) \text{ holds } \forall s \in (t,t_1)\}.
$
Notice that this set is non-empty by the claim above, hence $t_0 < t_1$. We wish to show that $t_0 = 0$. We observe that $0 \le t_0$, as otherwise $\partial E \cap L$ would contain more elements than just $p_0$. Let us now exclude by contradiction the case $t_0 > 0$. If $t_0 > 0$, then either \eqref{firlast} does not hold for a sequence of values $T_n \in (0, t_0)$, with $T_n\to t_0$, and it holds for every $t\in (t_0,t_1)$, and hence 
\begin{equation}\label{interior}
S_{t_0}^* \text{ intersects } S_{t_0}^- \text{ at an interior point}
\end{equation}
or \eqref{selast} does not hold for a sequence of values $T_n \in (0, t_0)$, with $T_n\to t_0$, and it holds for every $t\in (t_0,t_1)$, and hence
\begin{equation}\label{boundary}
\partial (S_{t_0}^*) \text{ intersects } \partial (S_{t_0}^-)\setminus \{p_0\} \text{ tangentially}.
\end{equation}
In case \eqref{interior} holds, we see as in \cite[Theorem 4.1.1]{LOP} that, since $t_0$ is the first intersection time, $S_{t_0}^*$ is tangent to $S_{t_0}^-$ at an intersection point $p$. Moreover, the two surfaces have the same mean curvature and $S_{t_0}^* \le S_{t_0}^-$ in a neighborhood of $p$, i.e. writing the two surfaces in a neighborhood of $p$ as the graphs of $u$ and $v$ respectively over the common tangent plane, we have $u \le v$. A similar situation happens in case \eqref{selast} occurs. In any case we may apply \cite[Corollary 3.2.5]{LOP} to conclude that $S_{t_0}^*$ and $S_{t_0}^-$ agree on some neighborhood. As in \cite[Theorem 4.1.1]{LOP}, one can then prove that $S_{t_0}^* = S_{t_0}^-$, which anyway is in contradiction with the fact that $p_0 \in S$. Thus, $t_0 = 0$. We then have that $S_0^+$ is a graph over $\pi_0''=\pi''$. Now, if $S_0^- = S_0^*$, then the proof is concluded. We may suppose, by  contradiction, that this is not the case. By the definition of $t_0$, $S_t^*$ does not intersect $S_t^-$ for any $t > 0$. Thus, if $S_0^- \neq S_0^*$, we see that $S_0^-$ cannot intersect $S_0^*$ except at $p_0 = 0$. Otherwise, we may use again \cite[Corollary 3.2.5]{LOP} to conclude $S_0^- = S_0^*$ as above. We apply Lemma \ref{grafo} to infer the existence of $\delta > 0$ such that 
\begin{equation}\label{claim:graph}
S_0^-\cap \{(x,y,z) \in \overline \Sigma: y^2 + z^2 \le \delta^2\}
\end{equation}
is the graph over $
T_\delta \subset \pi''$ 
of a convex function $f: T_\delta \to \R$. By \eqref{anglecond}-\eqref{cmccond} and Proposition \ref{varigraph}, we find that, for some $\lambda, \gamma \in \R$, $f$ satisfies in the weak sense the system 
\begin{equation}\label{sysun}
\begin{cases}
\dv(D\mathcal{A}(Df)) = \lambda,&\quad \text{ in } \intt_{\pi''} T_\delta,\\
(D\A(Df),\nu_{\pi}) = -\cos \gamma, &\quad \text{ on } T_\delta^1,\\
(D\A(Df),\nu_{\pi'}) = -\cos \gamma, &\quad \text{ on }T_\delta^2,
\end{cases}
\end{equation}
where $T_\delta^1, T_\delta^2$ are the two segments contained in $\partial T_\delta$, $\nu_\pi$ and $\nu_{\pi'}$ are the outer normals of $T_\delta^1$ and $T_\delta^2$ respectively. Furthermore, since $0 \in S_0^-$, we find that $f(0,0) = 0$. Since $S_0^+$ is a graph over $\pi''$, also $S^*_0$ is. Let $v$ be the function that describes $S^*_0$. $v$ satisfies the same system \eqref{sysun} as $f$ on $T_\delta$, and fulfills $v(0,0) = 0$, since $0 \in S_0^+$. As $S_0^*$ does not intersect $S_0^-$ except at $0$, we further get $f < v$ in $T_\delta \setminus \{0\}$. By compactness, for a small $\eps > 0$, $f_\eps := f + \eps$ still solves system \eqref{sysun} and is such that  
\[
f_\eps < v \text{ on } \{(0,y,z) \cap \overline \Sigma: y^2 + z^2 = \delta^2\} \subset T_\delta.
\]
Furthermore, $f_\eps(0,0) = \eps > v(0,0) = 0$. This is in contradiction with \cite[Theorem 5.1]{FINB}, which implies that $f_\eps < v$ throughout $T_\delta$. Thus, $f = v$, and hence $S_0^* = S_0^-$ and $\pi''$ is a plane of symmetry for $\partial E$, as wanted.
\end{proof}

We need one last lemma before proving Theorem \ref{pointrigid}.

\begin{lemma}\label{MIERSE}
Consider the assumptions and conclusion of Lemma \ref{grafo}. Then, if the functions $f$, $g : T_\delta \to \R$ of Lemma \ref{grafo} additionally satisfy
\begin{equation}\label{boundfg}
|Df(x)| + |Dg(x)| \le C, \quad \forall x \in T_\delta \setminus \{0\},
\end{equation}
then
\begin{equation}\label{C11regprel}
f,g \in C^{1,1}(T_\delta) \cap C^\infty(T_\delta \setminus \{0\})
\end{equation}
and for every $\eta\in (0,1)$ there exists $C > 0$ and $R_0>0$ such that for every $R\in (0,R_0)$,
\begin{equation}\label{c2etalem}
[D^2f]_{C^{\eta}(T_\delta\cap B_R^c(0))} + [D^2g]_{C^{\eta}(T_\delta\cap B_R^c(0))}\le \frac{C}{R^\eta}.
\end{equation}
\end{lemma}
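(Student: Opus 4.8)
The plan is to regard $f$ (the argument for $g$ being symmetric) as a convex solution of a uniformly elliptic capillary boundary value problem on the planar sector $T_\delta$, and to deduce everything from a single \emph{quadratic detachment} estimate at the vertex $0$. The starting point is that the gradient bound \eqref{boundfg} makes the projection onto $\pi''$ nondegenerate, so that $f$ is a genuine graphical solution of the capillary system \eqref{sysun}: it solves $\dv(D\A(Df)) = \lambda$ in $\intt_{\pi''}T_\delta$ and the constant contact-angle condition $(D\A(Df),\nu_i) = -\cos\gamma$ on the two edge segments $T_\delta^1,T_\delta^2$. Since $|Df|\le C$, the integrand $\A$ is smooth and uniformly convex on the compact set $\{|X|\le C\}$, so the operator is uniformly elliptic with smooth structure; this is what powers all the elliptic machinery below.

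Away from the vertex, the regularity is standard. On $T_\delta\setminus\{0\}$ the boundary is either absent (interior) or a flat segment carrying the oblique condition; interior Schauder estimates together with the boundary Schauder theory for the oblique (capillary) problem — or, equivalently, reflection across the flat walls — upgrade $f$ from Lipschitz to $C^{2,\eta}_{\loc}$ and then, by bootstrapping, to $f\in C^\infty(T_\delta\setminus\{0\})$, which is one half of \eqref{C11regprel}.

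The crux is the behaviour at the vertex. By Proposition \ref{uniquecone} the rescalings $f(r\,\cdot)/r$ converge to the $1$-homogeneous convex function $H$ of \eqref{homogeneous}; passing the (uniformly elliptic) equation to the limit, where the $\lambda$-term rescales to $0$, shows that $H$ solves $\dv(D\A(DH)) = 0$ on the open sector, so Proposition \ref{rigconv} (with $\mathcal{L}=\A$) forces $H$ to be affine, hence linear. Thus $f$ is differentiable at $0$ with tangent plane $\ell$, $f\ge\ell$ by convexity, and — the contact angle being scale invariant — $\ell$ itself satisfies $(D\A(D\ell),\nu_i)=-\cos\gamma$. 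The heart of the proof is to promote this to $0\le f-\ell\le C|x|^2$ near $0$. I would argue by contradiction: if $r_k\to 0$ is such that $\Theta_k:=r_k^{-2}\sup_{B_{r_k}\cap T_\delta}(f-\ell)\to\infty$, I normalize $\widehat w_k(x):=(f-\ell)(r_k x)/\sup_{B_{r_k}\cap T_\delta}(f-\ell)$. Convexity gives a uniform interior Lipschitz bound, so $\widehat w_k\to \widehat W$ locally uniformly, with $\widehat W$ convex, nonnegative, $\widehat W(0)=0$ and $\sup_{B_1}\widehat W=1$. Because $\Theta_k\to\infty$ the right-hand side $\lambda/\Theta_k$ vanishes in the limit, while the linearized conormal condition for $f-\ell$ is homogeneous (it is the difference of the identical conditions satisfied by $f$ and $\ell$); hence $\widehat W$ is a convex solution of a constant-coefficient uniformly elliptic equation $\dv(a_0 D\widehat W)=0$ on the open sector, with $(a_0 D\widehat W,\nu_i)=0$ on the two rays. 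Proposition \ref{rigconv} again forces $\widehat W$ to be affine, so $\widehat W = c\cdot x$; the two homogeneous conormal conditions on the transverse rays (here $\alpha<\pi$, so $\nu_1$ and $\nu_2$ are linearly independent) give $a_0 c=0$, whence $c=0$ and $\widehat W\equiv 0$, contradicting $\sup_{B_1}\widehat W=1$. This establishes the quadratic detachment; it is the step where the wedge geometry is genuinely used, and it is the main obstacle.

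Finally, $C^{1,1}$ regularity and \eqref{c2etalem} follow by a scale-invariant argument. Writing $w:=f-\ell$ and fixing $p\in T_\delta$ with $|p|=\rho$ small, the gradient-preserving rescaling provides a uniform interior/boundary $C^{1,\beta}$ estimate (of De Giorgi--Nash--Moser type for the quasilinear operator with bounded gradient), hence uniform Hölder control, at scale $\rho$, of the coefficients $a(x)=D^2\A(Df(x))$; then $w$ solves the linear uniformly elliptic equation $\dv(a\,Dw)=\lambda$ with these coefficients. Rescaling $w_p(y):=\rho^{-2}w(p+\tfrac{\rho}{2}y)$ (a half-ball when $p$ is near a wall), the detachment gives $\|w_p\|_{L^\infty(B_1)}\le C$, and interior/boundary Schauder yield $\|D^2w_p\|_{L^\infty(B_{1/2})}+[D^2w_p]_{C^\eta(B_{1/2})}\le C$. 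Undoing the scaling gives $\|D^2 f\|_{L^\infty(B_{\rho/4}(p))}\le C$ and $[D^2 f]_{C^\eta(B_{\rho/4}(p))}\le C\rho^{-\eta}$. The first bound, uniform as $\rho\to 0$, together with convexity yields $f\in C^{1,1}(T_\delta)$, completing \eqref{C11regprel}; patching the second over $T_\delta\cap B_R^c(0)$ — pairs of points at mutual distance bounded below being controlled directly by the uniform $L^\infty$ bound on $D^2 f$ — gives \eqref{c2etalem}. The argument for $g$ is identical.
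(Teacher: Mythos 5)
You take a genuinely different route from the paper, and the difference matters. The paper never proves a detachment estimate for $f$ itself: it defers \eqref{C11regprel} entirely to Subsection \ref{subsubreg}, where it follows almost immediately from convexity (since $D^2 f\ge 0$ pointwise and $\langle D^2\A(Df),D^2 f\rangle=\lambda$ with $D^2\A(Df)\ge\rho\,\id$ thanks to \eqref{boundfg}, the Hessian is bounded pointwise), and for \eqref{c2etalem} it introduces the stream function $\psi$ of \eqref{streamf}. The whole point of that device is that the conormal condition in \eqref{sysun} becomes the \emph{Dirichlet} condition $\psi=kz$ on both edges, so that $w=\psi-kz$ is a $C^{1,1}$ function vanishing on two transversal segments through the corner; hence $Dw(0)=0$ and the quadratic detachment \eqref{estw2} is free, with no blow-up argument. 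The final scaled-Schauder step is then the same in spirit as yours (dyadic annuli with cutoffs in the paper, balls at scale $|p|$ in your proposal).

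The genuine gap is in your contradiction proof of the quadratic detachment, the step you yourself call the crux. (a) For an arbitrary sequence $r_k$ with $\Theta_k\to\infty$ there is no doubling: $\sup_{B_{2r_k}}(f-\ell)$ need not be comparable to $\sup_{B_{r_k}}(f-\ell)$, so you control $\widehat w_k$ on $B_1$ only; the standard maximal-radius selection is needed and is not mentioned. (b) More fatally, the claim that interior Lipschitz bounds give $\widehat w_k\to\widehat W$ locally uniformly \emph{with} $\sup_{B_1}\widehat W=1$ is unjustified: $\widehat w_k$ is convex with $\widehat w_k(0)=0$, so its maximum over the closed unit sector sits on the arc $\{|x|=1\}$ (possibly at its endpoints on the edges), exactly where interior convergence says nothing. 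Convex nonnegative functions vanishing at $0$ with supremum $1$ can converge to $0$ locally uniformly in the interior, e.g.\ the ramps $v_k(x)=k\bigl((x,e)-1+\tfrac1k\bigr)_+$ for a unit vector $e$; so without estimates up to the arc and up to the edges your limit may simply be $\widehat W\equiv 0$, and no contradiction is reached. (c) Passing the conormal condition to the limit, and obtaining \emph{constant} coefficients $a_0$, requires convergence up to the edges and the gradient continuity $Df(r_k\,\cdot)\to D\ell$ there; neither is established at that stage (the same up-to-the-boundary issue is already hidden in your claim that $\ell$ inherits the two contact-angle conditions). All of this can be repaired (maximal radii, energy and De Giorgi--Nash--Moser estimates up to the edges for the conormal problem, scaled $C^1$ estimates), but these are precisely the delicate points of a corner-regularity argument and none is addressed. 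Note finally that the detour is unnecessary: the convexity trick above yields $f\in C^{1,1}(T_\delta)$ directly; then $Df(0)=D\ell$ exists, $0\le f-\ell\le C|x|^2$ is just Taylor's theorem with Lipschitz gradient, $\ell$ satisfies both contact-angle conditions by continuity of $Df$ along the edges, and your final Schauder step goes through as written.
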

\begin{proof}
We only need to show the assertion for $f$, the conclusion for $g$ is analogous. \eqref{C11regprel} will be shown in Subsection \ref{subsubreg}. It is enough to show \eqref{c2etalem}. To this end, recall that $f$ solves \eqref{sysun}.
As in \cite[Section 2]{MIE1}, we introduce the stream function $\psi$ such that
\begin{equation}\label{streamf}
\partial_y\psi = - (D\mathcal{A}(Df))_2 + \frac{\lambda}{2}z, \quad \partial_z\psi = (D\mathcal{A}(Df))_1 - \frac{\lambda}{2}y.
\end{equation}
The regularity of $f$ yields $\psi \in C^{1,1}(\overline T_\delta)\cap C^\infty(\overline T_\delta\setminus \{(0,0)\})$. Again in the notation of Lemma \ref{grafo}, we have $T_\delta \subset \pi''$. Assuming that $\Sigma$ has opening angle $\alpha \in (0,\pi)$, we can assume after a rotation that $T_\delta^i$ is parametrized in $\pi'' \sim\R^2$ by $sa^i$ for $s$ sufficiently small and
\[
a^i = \left(\cos\left( \frac{\alpha}{2}\right),(-1)^{i+1}\sin\left(\frac{\alpha}{2}\right)\right).
\]
By \eqref{streamf} we find that
\[
\frac{d}{ds}(\psi(sa^i)) = (-1)^{i + 1}\cos \gamma.
\]
Since $a_{2}^1 = -a_2^2$ and since we can assume without loss of generality that $\psi(0,0) = 0$, it follows that
\[
\psi(y,z) = \frac{\cos \gamma}{\sin\left(\frac{\alpha}{2}\right)}z = kz \text{ on }T_\delta^1\cup T_\delta^2.
\]
As in \cite{MIE2}, we invert \eqref{streamf} to get
\[
\partial_yf = \frac{F_2}{\sqrt{1- F^2}}, \quad \partial_z f = \frac{F_1}{\sqrt{1- F^2}}
\]
where
\[
F_1 = - \partial_y\psi + \frac{\lambda}{2}z, \quad F_2 = \partial_z \psi + \frac{\lambda}{2}y, \quad F^2 = F_1^2 + F_2^2 = \frac{|Df|^2}{1 + |Df|^2}.
\]
Thus,
\begin{equation}\label{dva}
\dv(B(y,z,D\psi)) = 0, \qquad \mbox{where} \qquad B:=(-\partial_z f,\partial_yf )= \left(- \frac{F_1}{\sqrt{1- F^2}},\frac{F_2}{\sqrt{1- F^2}}\right).
\end{equation}
It is easy to see that $B$ is smooth in all variable in its domain of definition. Moreover, for every $(y,z)$, a direct computation shows that $D_{p}B(y,z,p)$ is a symmetric matrix, which is uniformly positive definite since
\[
\max_{T_\delta} F^2= \max_{T_\delta}\frac{|Df|^2}{1 + |Df|^2} \overset{\eqref{boundfg}}{<} 1.
\]
We rewrite  \eqref{dva} as
\[
\langle A(y,z), D^2\psi(y,z)\rangle = h(y,z) \text{ in }\intt T_\delta, \quad \text{ and } \quad \psi = kz \text{ on }T_\delta^1\cup T_\delta^2,
\]
where $A$ and $f$ are Lipschitz in $T_\delta$ and $A$ is uniformly bounded from below. Now $w := \psi - kz$ solves
\begin{equation}\label{eqw}
\langle A(y,z), D^2w(y,z)\rangle = h(y,z) \text{ in }\intt T_\delta, \quad \text{ and } \quad w = 0  \text{ on }T_\delta^1\cup T_\delta^2.
\end{equation}
Notice that, since $w \in C^{1,1}(T_\delta)$ and $w = 0$ on $T_\delta^1\cup T_\delta^2$, then $Dw(0) = 0$ and hence
\begin{equation}\label{estw2}
|w(p)| \le C|p|^2 \text{ and } |Dw(p)| \le C|p|, \quad \forall p \in T_\delta.
\end{equation}
To conclude, we only need to show \eqref{c2etalem} for $w$, which would imply the same estimate for $f$. To do so, we fix $\eta\in(0,1)$ and we first consider annular sectors of the form
\[
K_R = \{(r\cos \theta ,r\sin \theta): R \le r \le 2R\} \text{ and } K'_R = \{(r\cos \theta ,r\sin \theta): \frac{1}{2}R \le r \le 4R\}.
\]
Fix $R > 0$ and much smaller than $\delta > 0$. Let $\varphi = \varphi(t)$ be a smooth non-negative cut-off function of $[1,2]$ inside $\left[\frac{1}{2},4\right]$ and set $u(x) := \varphi(|x|)$. Instead of studying $w$ directly, consider $w_0(x) := u \left(\frac{x}{R}\right) w(x)$, which solves
\[
\langle A(x), D^2w_0(x)\rangle = h_{1,R}(x) \quad\text{ in } T_\delta,
\]
with 
\[
h_{1,R}(x) := u \left(\frac{x}{R}\right)h(x) + \frac{1}{R}\left\langle A(x),Dw(x)\otimes Du\left(\frac{x}{R}\right) + Du \left(\frac{x}{R}\right)\otimes Dw(x)\right\rangle + \frac{w(x)}{R^2}\left\langle A(x),D^2u \left(\frac{x}{R}\right)\right\rangle.
\]
Due to the definition of $K'_R$ and \eqref{estw2}:
\begin{equation}\label{acca1r}
\|h_{1,R}\|_{C^0(K'_R)} \le C, \quad [h_{1,R}]_{C^\eta(K'_R)} \le \frac{C}{R^\eta}.
\end{equation}
This step is crucial to obtain the additional information that $w_0$ is zero near the curvilinear boundaries of $K'_R$. Further, recall that $w_0$ is zero on $T_\delta^1\cup T_\delta^2$. Now consider $w_{0,R} := w_0(Rx)$, which solves
\[
\langle A_R(x), D^2w_{0,R}(x) \rangle = h_{0,R}(x) \; \text{ in }K'_1,
\quad \mbox{ 
where
}
\quad
A_R(x) := A(Rx) \text{ and } h_{0,R}(x) := R^2{h_{1,R}}(Rx).
\]
Using the rescalings and Schauder estimates of \cite[Theorem 6.6]{GT}, we find
\begin{align*}
R^2&\|D^2w_0\|_{L^\infty(K'_R)} + R^{2 + \eta}[D^2w_0]_{C^\eta(K'_R)} = \|D^2w_{0,R}\|_{C^{\eta}(K'_1)} \\
&\le C(\|w_{0,R}\|_{C^0(K_1')} + \|h_{0,R}\|_{C^\eta(K_1')}) \overset{\eqref{estw2}}{\le} C(R^2 + R^2\|h_{1,R}\|_{C^0(K'_R)} + R^{2 + \eta}[h_{1,R}]_{C^\eta(K'_R)})\overset{\eqref{acca1r}}{\le} CR^2.
\end{align*}
This shows \eqref{c2etalem} for $w_0$ in $K'_R$. Since $w_0(x) = u\left(\frac{x}{R}\right) w(x)$, we see that then \eqref{c2etalem} holds for $w$ in $K_R$. We conclude the same for $f$. Thus, \eqref{c2etalem} is proved.
\end{proof}

\begin{Teo} \label{pointrigid}
Consider two planes in $\R^3$, $\pi$ and $\pi'$, intersecting in a line $L$. Let $\Sigma$ be one of the open, convex cylindrical sectors with opening angle $0 < \alpha < \pi$ of $\R^3 \setminus (\pi \cup \pi')$. Suppose $E$ is an open, non-empty, bounded and convex set with $E \subset \Sigma$, with the following properties:
\begin{enumerate}
\item $\partial E \cap L = \{p_0\}$;
\item\label{assu:rig} $\mathcal{H}^2(\partial E \cap \pi) \neq 0 \neq \mathcal{H}^2(\partial E \cap \pi')$ and  $\partial E$ intersects $\pi\setminus L$ and $\pi' \setminus L$ with constant angle of $120$ degrees;
\item\label{assu:cmc} $\partial E \cap \Sigma$ has constant mean curvature.
\end{enumerate}
If $\alpha \neq \frac{\pi}{3}$, no such surface exists. If $\alpha = \frac{\pi}{3}$, the two curves $\Gamma_1:=\partial_\pi(\partial E\cap \pi)$ and $\Gamma_2:=\partial_{\pi'}(\partial E\cap \pi')$ intersect $L$ tangentially at $p_0$, and more precisely the blow-up of $\llbracket \partial E\rrbracket$ at $p_0$ is given by
\begin{equation}\label{blowver}
\llbracket\pi \cap \overline \Sigma\rrbracket + \llbracket \pi'\cap \overline \Sigma \rrbracket.
\end{equation}
\end{Teo}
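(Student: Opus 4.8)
The plan is to reduce the statement to an analysis of the tangent cone of $\partial E$ at $p_0$, built from the two graphs produced by Lemma~\ref{grafo}. Normalising as in Lemma~\ref{grafo} so that $L$ is the $x$-axis, $p_0 = 0$ and $\pi'' = \{x=0\}$, I would write $S_\delta = \Gamma_f \cup \Gamma_g$ with $f$ convex, $g$ concave over the planar sector $T_\delta \subset \pi''$ of opening $\alpha$, and $f(0) = g(0) = 0$, $f \le g$. By Proposition~\ref{pointsym} the set $E$, hence the pair $(f,g)$, is symmetric with respect to $\pi''$. Both functions solve the capillary system \eqref{sysun} with $\gamma = \tfrac{2\pi}{3}$, so that $-\cos\gamma = \tfrac12$. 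The argument then splits according to whether the gradient bound \eqref{boundfg} holds near $0$, i.e. whether the two sheets stay non-vertical (bounded gradient) or turn vertical (unbounded gradient) at $p_0$.

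First I would compute the finite-slope tangent cone. The rescalings $f_r, g_r$ of \eqref{areaspec} converge, as in Proposition~\ref{uniquecone}, to positively one-homogeneous limits $H_f$ convex and $H_g$ concave; since the constant mean curvature term rescales away, the limiting cone is stationary, so $H_f, H_g$ are one-homogeneous weak solutions of $\dv(D\mathcal{A}(DH)) = 0$. Being convex (resp.\ concave), Corollary~\ref{statvarif} and Proposition~\ref{rigconv} force them to be affine, hence linear, and the reflection symmetry gives $H_g = -H_f =: -\ell$ with $\ell = by + cz$. Imposing the contact condition of \eqref{sysun} in the blow-up, $(D\mathcal{A}(DH_f), \nu_\pi) = (D\mathcal{A}(DH_f),\nu_{\pi'}) = \tfrac12$ on the two rays of $\partial T_\delta$, a direct computation with the symmetric conormals $\nu_\pi = (-\sin\tfrac{\alpha}{2}, \cos\tfrac{\alpha}{2})$, $\nu_{\pi'} = (-\sin\tfrac{\alpha}{2}, -\cos\tfrac{\alpha}{2})$ yields $c = 0$ and
\[
b^2\,\bigl(4\sin^2\tfrac{\alpha}{2} - 1\bigr) = 1 .
\]
Thus a finite-slope admissible cone exists only for $\alpha > \tfrac{\pi}{3}$, and $|b| \to \infty$ as $\alpha \downarrow \tfrac{\pi}{3}$. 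The vertical alternative I would handle via the stream function $\psi$ of \eqref{streamf}: its one-homogeneous blow-up $\Psi$ then satisfies $|\nabla\Psi| \equiv 1$, and a one-homogeneous function with unit gradient is necessarily linear (solving $G^2 + (G')^2 = 1$ in polar coordinates); matching the boundary datum $\Psi = kz$ on the two symmetric rays, with $k = \cos\gamma/\sin(\tfrac{\alpha}{2})$, forces $|k| = 1$, i.e. $\sin\tfrac{\alpha}{2} = \tfrac12$, that is $\alpha = \tfrac{\pi}{3}$. Hence for $\alpha < \tfrac{\pi}{3}$ neither a finite-slope nor a vertical tangent cone exists, so no surface exists; while for $\alpha = \tfrac{\pi}{3}$ only the vertical cone survives, \eqref{boundfg} fails, the sheets become tangent to the walls, the blow-up of the convex body is all of $\Sigma$, and by Proposition~\ref{uniquecone} the blow-up of $\llbracket \partial E\rrbracket$ is $\partial\Sigma$, which is exactly \eqref{blowver}; in particular $\Gamma_1$ and $\Gamma_2$ meet $L$ tangentially.

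It remains to exclude the range $\alpha > \tfrac{\pi}{3}$, where the finite-slope tangent cone above is a genuine admissible wedge and the blow-up analysis alone is not conclusive. Here \eqref{boundfg} holds, so Lemma~\ref{MIERSE} applies and provides $f, g \in C^{1,1}(T_\delta) \cap C^\infty(T_\delta\setminus\{0\})$ together with the decay \eqref{c2etalem}, which legitimises the stream function $\psi$, its uniformly elliptic equation \eqref{eqw} with boundary datum $\psi = kz$, and the quadratic vanishing \eqref{estw2} at the vertex. The plan is then to run a comparison argument in the spirit of \cite[Theorem~5.1]{FINB}, as in the closing step of the proof of Proposition~\ref{pointsym}, against the symmetric competitor singled out by the linear tangent cone: the strict ordering produced by such a comparison is incompatible with the common vertex value $\psi(0) = 0$ imposed by $p_0 \in S$, giving the desired contradiction and ruling out any such $E$ for $\alpha > \tfrac{\pi}{3}$.

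The hard part is precisely this last exclusion, together with the general passage to the limit when $f, g$ are only $C^{1,1}$ at $p_0$: one must justify the convergence of the rescalings and the stationarity of the limiting cone \emph{up to} the free boundary, and control $\psi$ near the conical point, where \eqref{sysun} degenerates as $|Df| \to \infty$. The estimate \eqref{c2etalem} is exactly the tool that makes the rescaling and comparison arguments rigorous near the vertex, and it is what ultimately pins the admissible opening angle to the single value $\alpha = \tfrac{\pi}{3}$.
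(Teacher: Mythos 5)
Your tangent-cone computation for the two cases $\alpha<\tfrac{\pi}{3}$ and $\alpha=\tfrac{\pi}{3}$ is a genuinely different route from the paper's, which simply invokes the Concus--Finn non-existence theorem \cite{CONFIN} for $\alpha<\tfrac{\pi}{3}$ and Tam's continuity-of-the-normal result \cite{TAM} for $\alpha=\tfrac{\pi}{3}$; your algebra $b^2\bigl(4\sin^2\tfrac{\alpha}{2}-1\bigr)=1$ is correct and recovers exactly the Concus--Finn threshold. Even there, though, several steps are asserted rather than proved: failure of \eqref{boundfg} only means $|Df|$ is unbounded along some sequence, not that $|Df|\to\infty$, so the eikonal equation $|\nabla\Psi|=1$ is not available everywhere for the blow-up of the stream function; a one-homogeneous solution of $|\nabla\Psi|=1$ need not be linear (take $G\equiv 1$ in $G^2+(G')^2=1$, i.e. $\Psi(x)=|x|$); the blow-up of the non-convex function $\psi$ has no reason to be one-homogeneous, since no monotonicity formula is provided; and passing the conormal condition of \eqref{sysun} to the blow-up limit requires an argument beyond the interior $W^{1,p}$ convergence of Proposition \ref{uniquecone}.

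The genuine gap, however, is the case $\alpha>\tfrac{\pi}{3}$, which is where the paper spends essentially all of its effort. Your plan — a comparison in the spirit of \cite[Theorem 5.1]{FINB} against ``the symmetric competitor singled out by the linear tangent cone'' — cannot work as described, for a structural reason: for $\alpha>\tfrac{\pi}{3}$ the vertex is not a local obstruction at all. The linear cone you computed is admissible, $f$ agrees with it to first order at $0$ (it is its blow-up), and the competitor solves \eqref{sysun} with $\lambda=0$ while $f$ solves it with $\lambda\neq 0$, so the two are not solutions of the same equation and no strict ordering between the vertex value and an outer arc ever materializes; indeed, capillary graphs with these data exist locally near the vertex precisely when $\alpha\geq\tfrac{\pi}{3}$, so any exclusion must be global. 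That is what the paper does: after the regularity step (\cite[Theorem 1]{SIMREG} together with Lemma \ref{MIERSE}), it builds the conformal parametrization of claim \eqref{claimfi}, uses the holomorphy of the Hopf differential, the Terquem--Joachimsthal theorem and a removable-singularity/Schwarz-reflection argument to show that the off-diagonal entry of the second fundamental form vanishes identically, i.e.\ that $S$ is a piece of a sphere, and only then reaches a contradiction by comparing the $90$-degree angle that $S$ must form with $\gamma_3$ (forced by the symmetry of Proposition \ref{pointsym}) with the angle strictly less than $90$ degrees it forms with $L$ (forced by \eqref{sim}); a circle cannot do both. None of this global machinery — the bulk of Section \ref{s:captem}, including Lemmas \ref{lem:extension} and \ref{lem:regg} — is present in, or replaceable by, a local comparison at the vertex, and you yourself flag this step as unresolved; as it stands, the proposal proves the theorem only on the set of angles where the paper's citation of \cite{CONFIN} and \cite{TAM} already suffices.
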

\begin{proof}
Rotating and translating, we suppose that $p_0 = 0$, $L = \{(x,0,0) \in \R^3: x \in \R\}$, $\pi'' = \{(0,y,z): y,z \in \R\}$ and that the lines $\pi\cap \pi''$ and $\pi'\cap \pi''$ form angles of opening $\frac{\alpha}{2}$ and $-\frac{\alpha}{2}$ with the line $\{(0,y,0): y \in \R\}$. By Proposition \ref{pointsym}, we will focus just on
\[
S := \overline{\partial E \cap \Sigma} \cap \{(x,y,z): x \ge 0\}.
\]
By Lemma \ref{grafo} we can write $S$ in a neighborhood of $p_0$ as the graph of a bounded concave function $u$ over the triangle 
$
T_\delta = B_\delta^{\pi''}(0) \cap \overline \Sigma.
$
This function $u$ solves system \eqref{sysun}. 
We divide the proof in three cases:
\subsection*{The case $\alpha < \frac{\pi}{3}$}
We can simply apply \cite{CONFIN}, see also \cite[Section 2]{CFR}, to infer that no such $u$ exists.

\subsection*{The case $\alpha = \frac{\pi}{3}$}
We can employ \cite{TAM} to deduce that the normal to $S$ is continuous up to $0$, and furthermore
\[
\lim_{(0,y,z) \to 0,\newline (0,y,z) \in T_\delta\setminus\{0\}}\frac{(-1,u_y,u_z)}{\sqrt{1 + u_y^2 + u_z^2}} = (0,-1,0).
\]
It follows that the blow-up of $S$ at $p_0$ is $L$. Hence, by Proposition \ref{uniquecone}, \eqref{blowver} readily follows. 
\subsection*{The case $\alpha > \frac{\pi}{3}$}

\begin{remark}
We are grateful to the anonymous referee for pointing out that we could study this case by means of the Heintze-Karcher inequality of the very recent work \cite{JWXZ} to conclude that $S$ is part of a sphere. Indeed we can apply here \cite[Theorem 1.5]{JWXZ}. The smoothness assumptions of $S$ are satisfied everywhere, except (a priori) at the point $x$. However, the smoothness at $x$ was only used in the proof of \cite[Theorem 1.5]{JWXZ} to show that, using the notation therein, $S_{r_y}(y+r_y \mathbf{k}_0)$ cannot have the first touching point at $x$. This fact is in our setting an easy consequence of the convexity of $E$. This observation can be used to replace the following argument that we originally devised to study this case. As both arguments are of independent interest, we decided to keep both of them.
\end{remark}

We first analyze the regularity of $S$, and then use it to show that $S$ is a subset of the sphere. The third and last step is to prove the technical claim \eqref{claimfi}.
\\
\\
\fbox{\emph{The regularity of S:}} By \eqref{assu:rig}-\eqref{assu:cmc}, $\overline{\partial E \cap \Sigma}$ is a smooth manifold with smooth boundary outside $0$, see Subsection \ref{subsubreg}. $S$ is bounded by the three curves $\gamma_1,\gamma_2$ and $\gamma_3$, where $\gamma_i := \Gamma_i \cap \{(x,y,z): x \geq 0\}$ for $i=1,2$ and $\gamma_3 := \overline{ \partial E \cap \pi''\cap \Sigma}$. The concatenation of $\gamma_1,\gamma_2$ and $\gamma_3$ is the boundary in the sense of manifolds of $S$. $\gamma_1$ intersects $\gamma_2$ at $0$ and $\gamma_3$ at $x_1$, while $\gamma_2$ and $\gamma_3$ intersect at $x_2$. These three points will be named vertices. From the regularity of $\overline{\partial E \cap \Sigma}$, we infer that $S$ is smooth up to the boundary (including the vertices $x_1,x_2$) except for the vertex $0$, i.e., at all points but $0$ the surface $S$ can be written as the graph of a smooth function defined on a disk (if at an interior point), on the complement of a smooth convex set (if at a boundary non-vertex point) as in Subsection \ref{subsubreg} and on a sector-like domain (if at $x_1,x_2$). Concerning the vertex $0$, since $\alpha > \frac{\pi}{3}$ we can employ \cite[Theorem 1]{SIMREG} to infer that the function $u$ representing $S$ in $T_\delta$ as above is such that
\begin{equation}\label{sim}
u \in C^1(T_\delta).
\end{equation}
Thus, using Lemma \ref{MIERSE}, we infer that $u$ is actually $C^{1,1}(T_\delta) \cap C^\infty(T_\delta \setminus\{0\})$, and, fixed any $\eta \in (0,1)$, we have the following controlled blow-up of $[u]_{C^{2,\eta}}$ for all small enough $R > 0$: 
\begin{equation}\label{c2eta}
[D^2u]_{C^{\eta}(T_\delta\cap B_R^c(0))} \le \frac{C}{R^\eta}.
\end{equation}
From now on, we fix $\eta\in (0,1)$. In conclusion, $S$ is globally a $C^{1,1}$ manifold with boundary, smooth except at $0$, where we have the additional information \eqref{c2eta}. Moreover, due to its smoothness and Proposition \ref{pointsym},
\begin{equation}\label{secangle}
S \text{ intersects the curve $\gamma_3$ with an angle of $90$ degrees,}
\end{equation}
\begin{equation}\label{novanta}
\text{ $\gamma_i$ intersects $\gamma_3$ with an angle of 90 degrees at $x_i$, for $i=1,2$,}
\end{equation}
and, from \eqref{sim}, we also have that
\begin{equation}\label{rig:beta}
\text{the curves $\gamma_1$ and $\gamma_2$ intersect at $0$ forming an angle $0 < \beta < \pi$.}
\end{equation}
Notice that the curves enjoy the following property
\begin{equation}\label{reggamma}
\gamma_i \text{ is a }C^{1,\eta} \text{ curve for all }i=1,2,3.
\end{equation}
\fbox{\emph{$S$ must be part of a sphere and contradiction:}} Our aim is to show that combining \eqref{secangle} and assumption \eqref{assu:rig} we obtain that $S$ is actually part of a sphere. If we manage to do so, we get a contradiction. Indeed, denoting with $0$ and $p$ the points given by $S \cap \{(0,y,0),y \in \R\}$, we see from \eqref{secangle} that $S$ must intersect $\gamma_3$ at $p$ with an angle of $90$ degrees, and from \eqref{sim} that $S$ must intersect $L$ at $0$ with an angle of less than $90$ degrees. Thus, $S \cap \{(x,y,0): x,y\in \R\}$ cannot be part of a circle and $S$ cannot be part of a sphere. \\
\\
To show that $S$ is part of a sphere, we adapt \cite[Section 2]{FMC}. Recalling that $\beta$ is defined in \eqref{rig:beta}, let
\begin{equation}\label{set:Q}
Q := \left\{(x,y) \in \R^2: x \le 0, y \in \left[\frac{-\beta}{2},\frac{\beta}{2}\right]\right\}
\end{equation}
and
$
p_1 := (0,-\beta/2)$, $p_2 := (0,\beta/2)
$.
We claim that:
\begin{equation}\label{claimfi}
\mbox{there exists a conformal parametrization  $\Phi: Q \to S \subset \R^3$ satisfying the following properties:}
\end{equation}
\[
\Phi(p_i) = x_i, \forall i=1,2, \quad \lim_{z \to \infty, z \in Q}\Phi(x) = 0;
\]
additionally, for some $\delta \in (0,1)$, we have that $\Phi \in C^{1,\delta}(Q)$, $\Phi \in C^{2,\delta}(Q\setminus\{p_1,p_2\})$ and, crucially,
\begin{equation}\label{control}
\lim_{z \to \infty, z \in Q}|D^2\Phi(z)| \to 0, \quad |D^2\Phi(z)| \le \frac{C}{\dist(z,p_i)^{1-\delta}}, \text{ for all $z$ close to $p_i$}.
\end{equation} 
Recall that $\Phi$ is said to be conformal if, everywhere in $\intt Q$,
\begin{equation}\label{conformal}
(\partial_x \Phi,\partial_y\Phi) = 0 \text{ and } \quad |\partial_x\Phi| = |\partial_y \Phi|.
\end{equation}
We will prove claim \eqref{claimfi} later. Assuming its validity, we now conclude the proof of Theorem \ref{pointrigid}.
Consider the second fundamental form of $S$ in these coordinates
\[
II = \left(\begin{array}{cc} L & M \\ M & N\end{array}\right).
\]
It is useful now to introduce the complex coordinates $z = x + iy$. The computations of \cite[Section 1]{TRIF1} show that in conformal coordinates, the so-called \emph{Hopf differential}
\[
f(z) := L - N - 2iM
\]
is holomorphic in $\intt Q$, due to the fact that $S$ has constant mean curvature. Since $\Phi$ is $C^{2,\delta}$ in $Q \setminus \{p_1,p_2\}$, $f$ is continuous in the same domain. Recall in fact that $L,M,N$ are given by the scalar product of second derivatives of $\Phi$ with the unitary normal of $S$. We apply the Terquem-Joachimsthal Theorem \cite[Theorem 9, Chapter 4]{MSp}, which states that, if a smooth curve $\gamma$ is a line of curvature for $M_1$, i.e. if $\gamma'$ is at all times an eigenvector of the shape operator of $M_1$, then another surface $M_2$ intersects $M_1$ along $\gamma$ with a constant angle if and only if $\gamma$ is also a line of curvature for $M_2$. Since for a plane the shape operator is the zero matrix, every curve on the plane is a line of curvature. Thus, the constant angle condition of \eqref{assu:rig}-\eqref{novanta} yields that $\gamma_i$ is a line of curvature for $S$, for all $i =1,2,3$. Due to our choice of $Q$ and the fact that the shape operator is given by $I^{-1}II$, denoting $e_1, e_2$ to be the canonical basis of $\R^2$, this amounts to ask that:
\begin{equation}\label{rig:I}
I^{-1}II(te_1 + p_i)e_1 = k_i(t)e_1, \quad \forall i=1,2, \, \forall t \in (-\infty,0); \qquad I^{-1}II\left(\frac{\beta}{2}se_2\right)e_2 = k(s)e_2, \quad \forall s \in (-1,1),
\end{equation}
for some functions $k_i, k$ representing the eigenvalues of the shape operator. Equivalently, since \eqref{conformal} implies that the first fundamental form is diagonal, we can rewrite \eqref{rig:I} as
\begin{equation}\label{rig:II}
(II(te_1 + p_i)e_1,e_2) = 0, \quad \forall i=1,2, \, \forall t \in (-\infty,0); \qquad \left(e_1, II\left(\frac{\beta}{2}se_2\right)e_2\right) = 0, \quad \forall s \in (-1,1).
\end{equation}
This implies that $M = 0$ on $\partial Q \setminus \{p_1,p_2\}$. Suppose for a moment that $M$ extends continuously on $\{p_1,p_2\}$ and hence that $M = 0$ on $\partial Q$. Then, by \eqref{regphi3} and the maximum principle for harmonic functions, we would find that $M \equiv 0$ on $Q$. This shows that, since $f$ is holomorphic, $L-N \equiv c \in \R$. Using once again \eqref{regphi3}, we find that $c = 0$ and hence that $L = N$ on $Q$, $M = 0$ on $Q$, i.e. that $S$ is part of a sphere, as wanted. Hence we are only left to show that $M$ extends continuously to $p_i$. 
\\
\\
Upon translating, suppose $p_i = 0$. Then, we have an harmonic function $M$ defined on a subset of a quadrant of $\R^2$, say on
$
\Omega := B_r(0) \cap \{(x,y): x \le 0, y \le 0\}\setminus\{0\},
$
with the properties that $M \in C^\infty(\intt \Omega)\cap C^0(\Omega)$ and
$
M(0,y) = 0, M(x,0) = 0$, for $|x|, |y| \le r, x<0, y < 0$. Moreover, by \eqref{control},
$
|M(z)| \le C|z|^{\delta-1}
$
for some $\delta \in (0,1)$.
We extend $M$ to $B_r$ in the following way, which is a simple variant of Schwarz Reflection Principle:
\begin{equation}\label{schwa}
v(x,y) := \begin{cases}
M(x,y), \text{ if } x, y \le 0,\\
-M(-x,y), \text{ if } x \ge 0, y \le 0,\\
-M(x,-y), \text{ if } x \le 0, y \ge 0,\\
M(-x,-y), \text{ if } x \ge 0, y \ge 0.
\end{cases}
\end{equation}
With this extension, $v$ is continuous in $B_r \setminus \{0\}$, and fulfills the local mean value property, hence we can use \cite[Theorem 1.24]{ABR} to infer that $v$ is also harmonic in $B_r\setminus\{0\}$. By construction, we still have that
\begin{equation}\label{expv}
|v(z)| \le \frac{C}{|z|^{1-\delta}}
\end{equation}
for some $\delta \in (0,1)$, and
$
v(0,y) = 0, v(x,0) = 0,$ for  $|x|, |y| \le r, (x,y) \neq 0$.
To show that $0$ is a removable singularity, we use \cite[Theorem 9.7]{ABR} to write $v = v_1 + v_2$,
where $v_1$ is harmonic in $B_r$ and $v_2$ is harmonic in $\R^2\setminus\{0\}$ with
\[
\lim_{|z| \to \infty}(v_2(z) - b\log(|z|)) = 0,
\]
for some constant $b$. It is therefore convenient to rewrite
$
v = v_1 + v_3 + b\log(|z|),
$
where $v_3 := v_2 - b\log(|z|)$ is such that $\lim_{z \to \infty}v_3 = 0$. Our aim is to show that $v_3 \equiv 0$ and $b = 0$. Let us start by showing $v_3 \equiv 0$. First, considering $v_4(z) := v_3\left(\frac{z}{|z|^2}\right)$, we see that $v_4$ is harmonic in $\R^2\setminus \{0\}$ with $\lim_{|z| \to 0}v_4(z) = 0$. Thus, $v_4$ extends to an harmonic function in $\R^2$. Moreover, from \eqref{expv}, we see that $v_4$ satisfies
$
|v_4(z)| \le C|z|^{1-\delta}
$
for all $z$ sufficiently large. The mean value property shows that $Dv_4 \equiv 0$ in $\R^2$ and, since $v_4(0) = 0$, we obtain $v_4 \equiv 0$, which implies $v_3 \equiv 0$. Thus,
$
v = v_1 + b\log(|z|),
$
and hence
\[
0 = v(x,0) = v_1(x,0) + b\log(|x|), \quad \forall 0 < |x| < r.
\]
Since $v_1$ is harmonic inside $B_r$, then $v_1$ is bounded inside $B_r$. This yields $b= 0$ and concludes the proof.
\\
\\
\fbox{\emph{Proof of claim \eqref{claimfi}:}}
We need to build $\Phi$ as in \eqref{claimfi}. The map $\Phi$ will be obtained in \eqref{opop} as a composition of maps with suitable properties. We start from the following parametrization of $S$. Using the convexity of $E$, we write $S$ as the graph $G_F$ of a function $F$ over a subset $S'$ of a sphere $\mathbb{S}$ contained in $E$. Let $\tilde p$ be the point of $S'$ such that $G_F(\tilde p) = 0$. Then, due to the regularity of $S$, $F: S' \to (0,+ \infty)$ is a smooth function outside of $\tilde p$, it is globally $C^{1,1}$ and its second derivatives fulfill estimate \eqref{c2etalem} near $\tilde p$. Now we extend $F$ to $F': \mathbb{S} \to S$ in such a way that, for some $C > 0$:
\begin{enumerate}[(a)]
\item $F' \in C^{1,1}(\mathbb{S})$;\label{AAA'}
\item $F'$ is smooth in $S'\setminus\{\tilde p\}$ and $C^{2,\eta}(\mathbb{S}\setminus \{\tilde p\})$; \label{BBB'}
\item for every small $R > 0$, \label{CCC'}
\[
[D^2F']_{C^\eta(\mathbb{S} \setminus B_R(\tilde p))} \le \frac{C}{R^\eta}.
\]
\end{enumerate}
One way to do so is to consider the stereographical projection $\pi_q: \mathbb{S}\setminus \{q\} \to \R^2$ for some $q \notin S'$ and extend $F'' := F \circ \pi_q^{-1}$ from $\pi_q(S')$ to the whole $\R^2$ in such a way that the extension $F'''$ fulfills
\begin{itemize}
\item $F''' = F''$ in $\pi_q(S')$;
\item $F''' \in C_{\loc}^{1,1}(\R^2)$;
\item $F'''$ is smooth in $\R^2\setminus \pi_q(\tilde p)$; 
\item for every small $R > 0$, 
\[
[D^2F''']_{C^\eta(\R^2 \setminus B_R(\pi_q(\tilde p)))} \le \frac{C}{R^\eta}.
\]
\end{itemize}
Once this is achieved, we can set $F':= \varphi F''' + (1-\varphi)$, where $\varphi$ is a suitable non-negative cut-off function of the compact set $\pi_q(S')$. This gives the required extension $F'$. The precise assumptions to do so and a sketch of the proof is given in Lemma \ref{lem:extension} below, see also Remark \ref{remarkone}. 
\\
\\
Let $M := G_{F'}(\SSS)$. Consider any point $q_0 \in \SSS \setminus S'$ and define 
\[
G' := G_{F'}\circ \pi_{q_0}^{-1}: \R^2 \to M \subset \R^3,
\]
where $\pi_{q_0}: \SSS \to \R^2$, is the stereographical projection based in $q_0$, as above. In these charts, we introduce the metric tensor given by
\begin{equation}\label{gij2}
 g_{ij}(y) = (\partial_iG'( (G')^{-1}(y)), \partial_jG'((G')^{-1}(y))), \quad \forall y \in M, \quad \forall \; 1\le i,j \le 2,
\end{equation}
Due to \eqref{AAA'} and the fact that $G_{F'}$ is a (radial) graph, $(g_{ij})$ is a Lipschitz tensor which is bounded from below and from above by two positive constants in the sense of quadratic forms. We employ \cite[Theorem 3.1.1]{JOST} to find a new $C^{1,\eta}$ parametrization
$
h: \SSS \to M
$
which is a conformal diffeomorphism. We introduce $\Psi := h \circ \pi_q^{-1}$, for any (fixed) $q \notin h^{-1}(S)$. Since $\pi_q$ is a smooth conformal diffeomorphism, $\Psi \in C^{1,\eta}(\R^2,M\setminus \{h(q)\})$ is a conformal diffeomorphism. Define the compact set
\[
\tilde T := \Psi^{-1}(S).
\]
Furthermore, notice that, after a translation of the domain, we can further suppose that
\begin{equation}\label{zeroes}
0_{\R^2} \in \tilde T \text{ and } \Psi(0_{\R^2}) = 0_{\R^3} \in S. 
\end{equation}
From now on, we will not denote differently $0 \in \R^2$ and $0 \in \R^3$. Notice that, since the stereographic projection is smooth and due to our choice of $q \in \SSS \setminus h^{-1}(S)$, \eqref{AAA'}-\eqref{BBB'}-\eqref{CCC'} imply the following properties of $G'$:
\begin{itemize}
\item $G'\in C^{1,1}(\R^2,M\setminus\{G_{F'}(q)\})$ is a $C_{\loc}^{1,1}$ diffeomorphism;
\item $G' \in C_{\loc}^{2,\eta}(\R^2 \setminus \{0\},M\setminus\{G_{F'}(q)\})$ and $(G')^{-1}\in C_{\loc}^{2,\eta}(M\setminus\{G_{F'}(q)\},\R^2 \setminus \{0\})$;
\item given a ball $B'\subset \R^2$ containing $(G')^{-1}(S)$ in its interior and for every small $R > 0$, if $s_0 := (G')^{-1}(0)$,
\[
[D^2 G']_{C^\eta(B' \setminus B_R(s_0))} + [D^2(G'^{-1})]_{C^\eta(B' \setminus B_R(s_0))} \le \frac{C}{R^\eta}.
\]
\end{itemize}
Finally, let us define
\begin{equation}\label{vv}
v_1 := (G'^{-1}\circ \Psi)_1,\quad  v_2 := (G'^{-1}\circ \Psi)_{2}.
\end{equation}
Since $h$ is conformal, then $\Psi$ satisfies the following conformality relations, see \cite[(3.1.9)]{JOST},
\[
\sum_{i,j}g_{ij}(\Psi(x))\partial_{x}v_i\partial_xv_j = \sum_{i,j}g_{ij}(\Psi(x))\partial_{y}v_i\partial_yv_j,\quad \sum_{i,j}g_{ij}(\Psi(x))\partial_xv_i\partial_yv_j = 0.
\]
By \eqref{gij2}, the metric is chosen to preserve the standard scalar product in $\R^3$, hence $\Psi$ is conformal in the sense of \eqref{conformal}. Now we wish to show the following properties of $\Psi$:
\begin{enumerate}[(A)]
\item $\Psi\in C^{1,\eta}(\R^2,M\setminus\{h(q)\})$ is a $C^{1,\eta}$ diffeomorphism;\label{AAAAA}
\item $\Psi \in C^{2,\eta}$ in $\R^2 \setminus \{0\}$ and $\Psi^{-1} \in C^{2,\eta}$ in $M \setminus \{0\}$; \label{BBBBB}
\item given an open ball $B \subset \R^2$ containing $\tilde T$ and for every small $R > 0$, \label{CCCCC}
\[
\|D^2\Psi\|_{L^\infty(B \setminus B_R(0))} + \|D^2(\Psi^{-1})\|_{L^\infty(B \setminus B_R(0))} \le \frac{C}{R^{1-\eta}}, \quad 
[D^2\Psi]_{C^\eta(B \setminus B_R(0))} + [D^2(\Psi^{-1})]_{C^\eta(B \setminus B_R(0))} \le \frac{C}{R}.
\]
\end{enumerate}
We already know \eqref{AAAAA}. We only need to show \eqref{BBBBB}-\eqref{CCCCC}. To this aim, by \eqref{vv}, we only need to prove the analogous properties for $v_1, v_2$ and their inverse $(w_1,w_2) := (v_1,v_2)^{-1}$. However, $v_1,v_2$ and $w_1,w_2$ solve the elliptic systems of \cite[(3.1.14) and (3.1.19)]{JOST} respectively, and hence the required regularity follows from classical elliptic estimates, see \cite[Theorem 6.2, Corollary 6.3]{GT}. Near the singular point $0$, one can use the same reasoning we employed at the end of Lemma \ref{MIERSE}.
\\
\\
So far, we have obtained a conformal, $C^{1,\eta}$ map
$
\Psi: B \to M \subset \R^3,
$
which is a diffeomorphism onto its image. Recall that $B$ is an open ball which contains $\tilde T= (\pi_q\circ h^{-1})(S) = \Psi^{-1}(S)$.
Roughly speaking, we wish to substitute the domain $\tilde T$ with the reference triangular domain
\[
T := \left\{(R\cos\theta,R\sin\theta): R \in [0,1], \theta \in \left[\frac{-\beta}{2},\frac{\beta}{2}\right]\right\}.
\]
To do so, we will use the Riemann Mapping theorem, which preserves conformality of $\Psi$. First, we need to study the regularity of the domain $\tilde T$. In particular, we wish to show that $\tilde T$ is a curvilinear triangle according to the definition below.

\begin{Def}\label{curvtrian}
Let $\eta \in (0,1)$. A compact set $D \subset \R^2$ is called a \emph{curvilinear triangle} if the following conditions are fulfilled. $D$ is a connected and simply connected Lipschitz set. Moreover, its boundary is the concatenation of three simple curves, $\sigma_i\in C^{1,\eta}([0,1],\R^2)$, $i=1,2,3$, such that
$
\sigma_i((0,1))\cap \sigma_j((0,1)) = \emptyset,$ for $i\neq j$.
Moreover, $\sigma_i$ intersects $\sigma_j$ in exactly one point forming an opening angle in $(0,\pi)$. The three distinct points $x_{1},x_{2},x_{3}$ at which the curves intersect are called \emph{vertices}, and the corresponding angles are denoted by $\gamma_i \in (0,\pi)$. Furthermore, every point $y$ of $\partial D$ fulfills one of the following two conditions.
\begin{enumerate}[(I)]
\item \label{intx} If $y \in \partial D \setminus \{x_{1},x_{2},x_{3}\}$, then there exists an open neighborhood $V = V(y)$, and a $C^{2,\eta}$ diffeomorphism 
$
F_y : V \to B_1(0) \subset \R^2
$
such that
\[
F_y(D \cap V) = B_1(0)\cap\{(a,b): a \ge 0\}, \quad F_y((\partial D)\cap V) = B_1(0) \cap \{(0,b): b \in \R\}.
\]
\item \label{badv} If $y\in \{x_{1},x_{2},x_{3}\}$, say $y = x_{i}$, then there exists an open neighborhood $V = V(y)$, and a $C^{1,\eta}$ diffeomorphism 
$
F_y :V \to B_1(0) \subset \R^2
$
with $DF_y(y)$ being a positive multiple of a rotation, such that
\[
F_y(D \cap V) = B_1(0)\cap \left\{(R\cos\theta,R\sin\theta): R>0, \theta \in \left[\frac{-\gamma_i}{2},\frac{\gamma_i}{2}\right]\right\},
\]
\[
F_y((\partial D) \cap V) = B_1(0)\cap \partial  \left\{(R\cos\theta,R\sin\theta): R>0, \theta \in \left[\frac{-\gamma_i}{2},\frac{\gamma_i}{2}\right]\right\}.
\]
and there exists $R_0>0$ such that
\[
\|D^2F_y\|_{L^\infty(V\setminus B_R(0))} \le \frac{C}{R^{1-\eta}},\quad [D^2F_y]_{C^\eta(V\setminus B_R(0))} \le \frac{C}{R}, \qquad \forall R\leq R_0.
\] 
\end{enumerate}
\end{Def}

\begin{remark}
Let us comment on Definition \ref{curvtrian}. First, the regularity we asked on the boundary is tailored to our needs, but of course different requirements are possible. In particular, notice that some vertices of $\tilde T$, namely $y_i:= \Psi^{-1}(x_i)$ for $i =1,2$, see \eqref{novanta}, admit charts of $C^{2,\eta}$ regularity. However, we will treat all vertices $0,x_1,x_2$ in the same way and hence we do not need to give different definitions. Secondly, in case $y \in \{x_{1},x_{2},x_{3}\}$, we added to the definition that $DF_y(y)$ is a multiple of a rotation, which will simplify some future computations. This is not restrictive. Indeed, denote $v_1 = e^{-i\gamma_i/2}$, $v_2 =e^{i\gamma_{i}/2}$ in the complex variables $\R^2\sim \mathbb C$. After a rotation of the domain,  we have that $DF_y(y)v_1 =\lambda v_i$ and $DF_y(y) v_2= \mu v_j$ with $\lambda, \mu > 0$ and $\{v_i,v_j\} = \{v_1,v_2\}$, due to the requirement that
\[
F_y((\partial D) \cap V) \subset \{a v_1: a > 0\}\cup \{b v_2: b > 0\}.
\]
We consider the new diffeomorphism $\tilde F(z) := kDF_y(y)^{-1}F_y(z)$, where $k > 0$ is a constant. It is not hard to see that $\tilde F$ fulfills all the required properties, once we consider a possibly smaller neighborhood of the point $y$.
\end{remark}

Since $\tilde T = \Psi^{-1}(S)$, $\Psi$ is a homeomorphism and $S$ is a connected, simply connected set, then the topological conditions of Definition \ref{curvtrian} are verified. Moreover, its boundary is given by the concatenation of $\alpha_1 := \Psi^{-1}\circ \gamma_1$, $\alpha_2 := \Psi^{-1}\circ \gamma_2$, and $\alpha_3 := \Psi^{-1}\circ \gamma_3$. The regularity of $\Psi^{-1}$ expressed in \eqref{AAAAA}-\eqref{BBBBB}-\eqref{CCCCC} and \eqref{reggamma} imply that $\alpha_i$ are $C^{1,\eta}$ curves. Moreover, their images only intersect at the distinct vertices $y_i = \Psi^{-1}(x_i)$ and $0 = \Psi^{-1}(0)$. Since $\Psi$ is conformal and \eqref{novanta}-\eqref{rig:beta} hold, we see that, for $i=1,2$,
\begin{equation}\label{novantaa}
\text{ $\alpha_i$ intersects $\alpha_3$ at an angle of 90 degrees at $y_i:= \Psi^{-1}(x_i)$}
\end{equation}
and also
\begin{equation}\label{rig:betaa}
\text{the curves $\alpha_1$ and $\alpha_2$ intersect at $0 \in \R^2$ forming an opening angle $\beta \in (0, \pi)$.}
\end{equation} 
From the regularity of $S$ that we discussed and the regularity of $\Psi$ expressed in \eqref{AAAAA}-\eqref{BBBBB}-\eqref{CCCCC}, we see that $\tilde T$ fulfills \eqref{intx} and \eqref{badv}. Hence $\tilde T$ is a curvilinear triangle in the sense of Definition \ref{curvtrian}. It can be checked, by the explicit definition of $T$, that $T$ is a curvilinear triangle as well.
\\
\\
By the Riemann Mapping Theorem, there exist $\delta,c,C>0$ and a map 
$
g : T \to \tilde T
$
such that:
\begin{enumerate}[(i)]
\item $g$ is biholomorphic from the interior of $T$ to the interior of $\tilde T$, and is a homeomorphism of $T$ to $\tilde T$; \label{A1}
\item $g \in C^{2,\delta}(T\setminus \{0,g^{-1}(y_1),g^{-1}(y_2)\})$; \label{A2}
\item \label{A3} $g(0) = 0$ and for all small $R > 0$ and all $z \in B_R(0)\cap T$,
$
c|z| \le |g(z)| \le C|z|.
$
Analogously, for all $i = 1,2$ and all $z\in T$ sufficiently close to $z_i$ it holds
$
c|z - z_i| \le |g(z) - g(z_i)| \le C|z-z_i|;
$
\item 
$
|g'(z)| \le C
$
for all $z \in T\setminus \{0,g^{-1}(y_1),g^{-1}(y_2)\}$, and for all small $R > 0$,
\[
\|g''\|_{L^\infty(T\setminus [{B_R(0)\cup B_R(g^{-1}(y_1)) \cup B_R(g^{-1}(y_2))}])} \le \frac{C}{R^{1-\delta}}.
\]
\label{A4}
\end{enumerate}
Indeed, since $\tilde T$ is a compact simply connected domain of $\R^2$, \eqref{A1} follows from the Riemann Mapping Theorem, see \cite[Theorem 3.2.1]{JOST} or \cite[Theorem 4.0.1, 5.1.1]{KRA}. As written in \cite[Theorem 3.2.1]{JOST}, we can prescribe the values of three points of the boundary of $T$ for $g$:
\begin{equation}\label{punti}
g(0) = 0, \, g(z_1) = y_1, \, g(z_2) = y_2,
\mbox{
where }
z_1 := \left(\cos\left(\frac{\beta}{2}\right),\sin\left(\frac{\beta}{2}\right)\right), \,  z_2 := \left(\cos\left(\frac{\beta}{2}\right),-\sin\left(\frac{\beta}{2}\right)\right).
\end{equation}
A map $g$ fulfilling \eqref{A1} and \eqref{punti} is unique, see \cite[Corollary 3.2.1]{JOST}. We will sketch the proof of  \eqref{A2}-\eqref{A3}-\eqref{A4} in Lemma \ref{lem:regg} below.
\\
\\
To conclude the present proof, we need one last map, this time explicit. Let $Q$ be as in \eqref{set:Q}. Then, the exponential map $e^z$ maps $Q$ biholomorphically inside $T$, once we consider $\mathbb{C}\cup \{\infty\}$ and we write, with a small abuse of notation, $e^z|_{Q}(\infty) = 0$. We can finally set 
\begin{equation}\label{opop}
\Phi := \Psi \circ g \circ e^z.
\end{equation}
By the regularity of $\Psi$ in \eqref{BBBBB} and $g$ in \eqref{A2}, we immediately obtain
$
\Phi \in C^{2,\delta}(Q\setminus \left\{p_1,p_2\right\}),
$
for some $\delta > 0$. Furthermore, we are composing a conformal map $\Psi$ in the sense of \eqref{conformal} with the holomorphic map $g\circ e^z$, thus $\Phi$ is still conformal, as a direct computation shows. In order to show \eqref{control}, we recall that
\[
p_1 = \left(0,-\frac{\beta}{2}\right), \qquad  p_2 = \left(0,\frac{\beta}{2}\right),
\]
and we claim the more precise bounds:
\begin{equation}\label{regphi3}
\text{$|D^2\Phi(z)| \le C|e^z|$ if $z \in Q$ and $|z|$ is sufficiently large}
\end{equation}
and
\begin{equation}\label{regphi4}
\text{$|D^2\Phi(z)| \le \frac{C}{\dist(z,p_i)^{1-\delta}}$, if $z \in Q$ is sufficiently close to $p_i$.}
\end{equation}
Write, for $f := e^z$ and any $a =1,2,3$, $i =1,2$, $j= 1,2$:
\begin{align*}
\partial_i \Phi^a = \sum_{k,\ell}\partial_k\Psi^a(g\circ f)\partial_\ell g^k(f) \partial_if^\ell,
\end{align*}
and
\begin{align*}
\partial_{ij} \Phi^a = \sum_{r,s,k,\ell}\partial_{kr}\Psi^a(g\circ f)\partial_sg^r(f)\partial_jf^s \partial_\ell g^k(f) \partial_if^\ell &+ \sum_{k,\ell,r}\partial_k\Psi^a(g\circ f)\partial_{\ell r} g^k(f)\partial_j f^r \partial_if^\ell \\
&+ \sum_{k,\ell}\partial_k\Psi^a(g\circ f)\partial_\ell g^k(f) \partial_{ij}f^\ell.
\end{align*}
Thus, noticing that $|\partial_if|,|\partial_{ij} f| \le |e^{z}|$ for all $i,j$, $z \in Q$ and using \eqref{AAAAA} and \eqref{A3}-\eqref{A4}, we can first estimate:
\begin{align*}
|\partial_{ij} \Phi^a| \le C\sum_{r,k}|\partial_{kr}\Psi^a(g\circ f)||e^z|^2 &+C|g''(e^z)||e^z|^2 + C|e^z|.
\end{align*}
Now \eqref{regphi3}-\eqref{regphi4} readily follow again from \eqref{CCCCC} and \eqref{A3}-\eqref{A4}. This concludes the proof of claim \eqref{claimfi} and hence the proof of Theorem \ref{pointrigid}.
\end{proof}

\begin{lemma}\label{lem:extension}
Let $K$ be a compact subset of $\R^2$ with non-empty interior, and let $\eta \in (0,1]$, $a \in \partial K$. Assume that for all couple of points $x,y\in K$ there exist differentiable curves $\gamma_i:[0,1] \to K$ such that $\gamma_i(0) = y, \gamma_i(1) = x$, and, for all $t,s\in[0,1]$, 
\begin{equation}\label{curve}
\begin{split}
&|\gamma_1'(t)| \le C|x-y|; \quad  |\gamma_2'(t)| \le C|x-y|; \quad |\gamma_2'(t)-\gamma_2'(s)| \le C|x- y|^{1+\eta};\\
&|\gamma_1'(t)-\gamma_1'(s)| \le \frac{C}{R^\eta}|x-y|^{1+\eta}\quad  \mbox{ and } \quad |\gamma_1(t) - a|\ge cR, \text{ if } |x-a|, |y-a| \ge R \geq 0.
\end{split}
\end{equation}
Assume that $F \in C^{2}(\intt K) \cap C^{1,\eta}(K)$ and that the following conditions are satisfied:
\begin{itemize}
\item if $\eta < 1$,  for all  $R > 0$ sufficiently small
\begin{equation}\label{Fexpagain}
\|D^2F\|_{L^\infty(K\setminus B_R(a))} \le \frac{C}{R^{1-\eta}}, \quad [D^2F]_{C^\eta(K\setminus B_R(a))} \le \frac{C}{R},
\end{equation}
\item if $\eta = 1$, there exists  $\alpha \in (0,1)$ such that
\begin{equation}\label{Fexpagain1}
\|D^2F\|_{L^\infty(K\setminus B_R(a))} \le C, \quad [D^2F]_{C^\alpha(K\setminus B_R(a))} \le \frac{C}{R^\alpha},
\end{equation}
and the curve of \eqref{curve} fulfills
\[
|\gamma_1'(t)-\gamma_1'(s)| \le \frac{C}{R^\alpha}|x-y|^{1+\alpha}.
\]
\end{itemize}
Then, there exists an extension $F' \in C^{2,\eta}_{\loc}(\R^2\setminus\{a\})\cap C^{1,\eta}_{\loc}(\R^2)$ with the same properties \eqref{Fexpagain} or \eqref{Fexpagain1} where the estimates are in $B_M(a)$ for any $M>0$, rather than in $K$, and the constant $C$ depends on $M$.
\end{lemma}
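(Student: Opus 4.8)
The plan is to reduce the global extension to a \emph{scale-invariant} problem near the singular point $a$ and then patch across scales. The crucial observation is that the weighted bounds \eqref{Fexpagain} (resp.\ \eqref{Fexpagain1}) are exactly invariant under the rescaling centred at $a$. For $R>0$ set
$$\tilde F_R(z) := R^{-(1+\eta)}\big(F(a + Rz) - \ell_R(z)\big), \qquad \tilde K_R := R^{-1}(K - a),$$
where $\ell_R$ is the affine function matching the value and gradient of $z\mapsto F(a+Rz)$ at a fixed reference point $z_0$ with $|z_0|=1$. A direct computation using \eqref{Fexpagain} and $F\in C^{1,\eta}(K)$ shows that on the fixed annulus $\{\tfrac12\le |z|\le 2\}\cap \tilde K_R$ the functions $\tilde F_R$ are bounded in $C^{2,\eta}$ uniformly in $R$: the factors $R^{1-\eta}$ and $R$ produced by differentiating cancel precisely the weights $R^{\eta-1}$ and $R^{-1}$ appearing in \eqref{Fexpagain}. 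In the borderline case $\eta=1$ the identical renormalization (with $R^{2}$) yields, from \eqref{Fexpagain1}, uniform $C^{2,\alpha}$ bounds, and the argument below runs verbatim with $\alpha$ in place of $\eta$. Likewise, hypothesis \eqref{curve} together with $|\gamma_1(t)-a|\ge cR$ rescales to a \emph{uniform} quasiconvexity condition for the family $\{\tilde K_R\}$: any two points of $\tilde K_R\cap\{\tfrac12\le|z|\le2\}$ are joined inside $\tilde K_R$ by a curve of length comparable to their distance, with $C^{\eta}$-controlled tangent, and staying at distance $\ge c$ from the origin (the rescaled image of $a$), i.e.\ inside the region where the rescaled estimates hold.

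First I would perform the \textbf{local extension at each dyadic scale}. Fix $R_j:=2^{-j}$ and cover $\R^2\setminus\{a\}$ by the overlapping annuli $A_j:=\{2^{-j-1}\le|z-a|\le 2^{-j+1}\}$. The role of \eqref{curve} is exactly to furnish, uniformly in $R$, the Taylor-remainder estimates needed for a $C^{2,\eta}$ extension: integrating $D^2\tilde F_R$ along the rescaled curve $\tilde\gamma_1$ and using its $C^\eta$-controlled tangent, one bounds, for $x,y$ in the annulus,
$$\big|\tilde F_R(x) - P_y(x)\big| \le C|x-y|^{2+\eta}, \quad \big|D\tilde F_R(x) - DP_y(x)\big| \le C|x-y|^{1+\eta}, \quad \big|D^2\tilde F_R(x) - D^2\tilde F_R(y)\big| \le C|x-y|^{\eta},$$
where $P_y$ is the second-order Taylor polynomial of $\tilde F_R$ at $y$, with $C$ independent of $R$. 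The classical Whitney extension theorem (for $C^{2,\eta}$ functions on a uniformly quasiconvex closed set) then produces $\tilde E_R\in C^{2,\eta}(\R^2)$ extending $\tilde F_R$, with $C^{2,\eta}$ norm bounded independently of $R$. Undoing the rescaling defines, on a neighbourhood of $A_j$, an extension $E_j$ of $F$ satisfying the weighted bounds $\|D^2E_j\|_{L^\infty}\le C R_j^{\eta-1}$ and $[D^2E_j]_{C^\eta}\le C R_j^{-1}$ (respectively the $\eta=1$ analogues).

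The second step is to \textbf{patch} the $E_j$. Choose a partition of unity $\{\chi_j\}$ subordinate to $\{A_j\}$ with $|D^k\chi_j|\le C R_j^{-k}$, and set $F':=\sum_j\chi_j E_j$ on $B_M(a)\setminus\{a\}$, completing the construction in the region where $F$ is non-singular by a single standard $C^{2,\eta}$ extension. Since consecutive extensions both equal $F$ on $K$ and have uniformly bounded rescaled $C^{2,\eta}$ norms, their differences obey $|D^m(E_j-E_{j+1})|\le C R_j^{\eta-m}$ on the overlap $A_j\cap A_{j+1}$ for $m=0,1,2$; substituting these into the Leibniz expansion of $D^2F'$ and using the telescoping identity $\sum_j D^k\chi_j\equiv 0$ shows that the diverging factors $R_j^{-k}$ from the cut-offs are absorbed, so that $F'$ satisfies \eqref{Fexpagain} (resp.\ \eqref{Fexpagain1}) on $B_M(a)$. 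Moreover $F'\in C^{2,\eta}_{\loc}(\R^2\setminus\{a\})$ follows scale by scale, while $F'\in C^{1,\eta}_{\loc}(\R^2)$ up to $a$ holds because the uniform $C^1$ bounds on the scale-$R$ pieces give $DF'(x)\to DF(a)$ as $x\to a$ with rate $|x-a|^{\eta}$, which is precisely $C^{1,\eta}$ continuity at $a$.

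The main obstacle is exactly this patching near $a$: one must verify that the partition-of-unity derivatives, which blow up like $R_j^{-1}$ and $R_j^{-2}$, do not destroy the weighted estimates. This is where the quantitative closeness $|D^m(E_j-E_{j+1})|\le C R_j^{\eta-m}$ of neighbouring-scale extensions is essential; it is itself a consequence of the uniform-in-$R$ Whitney bounds and of the fact that all $E_j$ interpolate the \emph{same} data $F$ on $K$, combined with the cancellation $\sum_j D^k\chi_j\equiv 0$. A secondary point is to confirm that $F'$ is genuinely only $C^{1,\eta}$ at $a$, so no spurious regularity is asserted there; this is consistent, since the weighted $D^2$ bound is permitted to diverge as $x\to a$.
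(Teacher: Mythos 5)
Your overall strategy --- dyadic rescaling around $a$, a classical (unweighted) Whitney extension on each rescaled annulus, and then patching with a dyadic partition of unity --- is genuinely different from the paper's proof, which performs a \emph{single} Whitney construction following Evans--Gariepy: the only non-standard ingredient there is the choice of base points $s_j = b(x_j)\in\partial K$ satisfying both $|x_j-b(x_j)|\le C\dist(x_j,\partial K)$ and $|b(x_j)-a|\ge \frac{1}{10}\min\{\rho,|x_j-a|\}$, so that the second-order Taylor data entering the Whitney sum is always sampled at points where the weighted bounds \eqref{Fexpagain} have the correct size. Your route is viable in principle (the rescaling identities are correct, and the scale-invariance of \eqref{Fexpagain}--\eqref{Fexpagain1} is exactly as you say), but as written the central patching step does not close.

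Concretely: the consecutive-scale estimates you claim, $|D^m(E_j-E_{j+1})|\le CR_j^{\eta-m}$, are not strong enough to absorb the cut-off derivatives. In the telescoped Leibniz expansion the worst terms are $D^2\chi_j\,(E_j-E_{j_0})$ and $D\chi_j\otimes D(E_j-E_{j_0})$; with your exponents these are bounded by $CR_j^{-2}\cdot R_j^{\eta}=CR_j^{\eta-2}$ and $CR_j^{-1}\cdot R_j^{\eta-1}=CR_j^{\eta-2}$, overshooting the target $CR_j^{\eta-1}$ of \eqref{Fexpagain} by a full factor $R_j^{-1}$ (note also that for $m=2$ your claimed bound $R_j^{\eta-2}$ is actually \emph{weaker} than the trivial bound $R_j^{\eta-1}$, which suggests the exponents are off by one). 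What the argument needs is $|D^m(E_j-E_{j+1})|\le CR_j^{1+\eta-m}$ on $A_j\cap A_{j+1}$ for $m=0,1,2$. This stronger estimate is true, but it does not follow, as you assert, merely from the fact that the two extensions ``both equal $F$ on $K$'' and have uniformly bounded rescaled norms: two extensions of the same data can differ by an arbitrary $C^{2,\eta}$ function vanishing on the data set, and if $K$ missed the overlap annulus the claim would simply be false (each $E_j$ agrees with $F$ only on $K\cap A_j$, not on all of $K$). The correct argument requires two further observations: (i) $K$ meets every overlap $A_j\cap A_{j+1}$, because $a\in\partial K\subset K$ and \eqref{curve} makes $K$ path-connected, so $K$ intersects every circle $\partial B_r(a)$ with $r\le\max_{y\in K}|y-a|$; and (ii) at any common data point $q\in K\cap A_j\cap A_{j+1}$ the full second-order jets of $E_j$ and $E_{j+1}$ coincide (both equal the jet of $F$ at $q$), so that Taylor expansion at $q$, combined with the uniform rescaled seminorms $[D^2E_j]_{C^\eta}\le CR_j^{-1}$ and $|x-q|\le CR_j$, yields $|D^m(E_j-E_{j+1})(x)|\le CR_j^{-1}|x-q|^{2+\eta-m}\le CR_j^{1+\eta-m}$. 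With these corrected difference bounds your Leibniz/telescoping computation does close, and the remainder of your sketch (the $C^{1,\eta}$ regularity across $a$ and the matching with a standard extension away from $a$) is sound; so the gap is real but repairable, and the repaired argument would be a legitimate alternative to the paper's single-sum construction.
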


\begin{proof}[(Sketch of) Proof] The complete proof of this result is technical and lengthy, but is based on the classical Whitney extension Theorem, see \cite{WHI}. We will follow the proof given in \cite[Theorem 6.10]{EVG}. We will define a candidate extension $F'$, which has to be chosen with some care in order to take into account the singular point $a$. Thus, we will omit the proof that $F'$ is an extension with the properties above, as all the computations are rather standard, see \cite[Theorem 6.10]{EVG} for the first order estimates.
\\
\\
We start by noticing that assumption \eqref{curve} implies the following: for every couple $x,y \in K$,
\begin{align}
|F(x) - F(y)| &\le C\|DF\|_{\infty}|x-y|, \label{FLIP}\\
|F(x) - F(y) - (DF(y),x-y)| &\le C\|F\|_{C^{1,\eta}}|x-y|^{1+\eta},\label{C1}
\end{align}
and, if $|x-a|,|y-a|\ge R$:
\begin{align}
|DF(x) - DF(y)| &\le C\min\left\{|x-y|^\eta,\frac{|x-y|}{R^{1-\eta}}\right\},\label{DFDF}\\
|DF(x) - DF(y) - D^2F(y)(x-y)| &\le \frac{C}{R}|x-y|^{1+\eta}, \label{DFDF2}\\
|F(x) - F(y) - (DF(y),x-y) - \frac{1}{2}D^2F(y)[x-y,x-y]| &\le \frac{C}{R}|x-y|^{2+\eta}.\label{C2}
\end{align}
\eqref{FLIP}-\eqref{C1}-\eqref{DFDF}-\eqref{DFDF2} are immediate. \eqref{C2} easily follows from the following equalities for $\gamma_1$ as in \eqref{curve}:
\begin{align*}
F(x)& - F(y) - (DF(y),x-y) - \frac{1}{2}D^2F(y)[x-y,x-y] \\
&= \int_0^1\int_0^t(D^2F(\gamma_1(s))\gamma_1'(s),\gamma_1'(t))dsdt - \frac{1}{2}D^2F(y)[x-y,x-y]\\
&= \int_0^1\int_0^t(D^2F(\gamma_1(s))[\gamma_1'(s)-(x-y)],\gamma_1'(t))dsdt  + \int_0^1\int_0^t(D^2F(\gamma_1(s))(x-y),\gamma_1'(t)-(x-y))dsdt \\ &\qquad \qquad + \int_0^1\int_0^t [D^2F(\gamma_1(s))-D^2F(y)][x-y,x-y]dsdt.
\end{align*}
As already mentioned, we follow \cite[Theorem 6.10]{EVG}. Let $U := \R^2 \setminus K$. As in the proof of \cite[Theorem 6.10]{EVG}, we can consider a family of disjoint balls $\{B_{r_j}(x_j)\}_{j\in \N}$ such that $\bigcup_{j}B_{5r_j}(x_j) = U$. For all $x\in U$, let $r(x) := \frac{1}{20}\min\{1,\dist(x,K)\}$. By \cite[Theorem 6.10]{EVG}, we know that, if
\[
S_x = \{x_j: B_{10 r(x_j)}(x_j) \cap B_{10r(x)}(x)\neq \emptyset\},
\]
then $\mathcal H^0(S_x)$ is uniformly bounded and
\[
\frac{1}{3} \le \frac{r(x)}{r(x_j)} \le 3, \quad \forall x_j \in S_x.
\]
With the same procedure of \cite[Theorem 6.10]{EVG}, we can consider a partition of unity $\{\chi_j\}_{j\in \N}$ subordinate to $\{B_{r_j}(x_j)\}_{j\in \N}$ with the properties that for all $j,i \in \N, x \in U$
\[
\spt(\chi_j) \subset B_{10r(x_j)}(x_j), \quad\sum_j \chi_j(x) = 1,\quad \sum_j D^{(i)}\chi_j (x) = 0,\quad |D^{(i)}\chi_j(x)| \le \frac{C}{r^i(x)}.
\]
Now define $F'$ by
$
F' = F \text{ on } K
$
and, for $x \in U$,
\[
F'(x) = \sum_j(F(s_j) + (DF(s_j),x-s_j) + \frac{1}{2}D^2F(s_j)[x-s_j,x-s_j])\chi_j(x),
\]
where $s_j \in K$ is chosen in the following way. Let $\rho = \max_{y \in K}|y-a|$. Given any $x \in U$, find $b(x) \in \partial K$ such that
\begin{equation}\label{propsj}
|x - b(x)| \le C\dist(x,\partial K) \qquad \text{ and } \qquad |b(x)-a| \ge \frac{\min\{\rho,|x-a|\}}{10}.
\end{equation}
This is possible with a constant independent of $x$. Indeed, consider a point $y \in \partial K$ such that $\dist(x,\partial K) = |x- y|$. If $|y-a| \ge \frac{\min\{\rho,|x-a|\}}{10}$, then set $b(x) := y$. Otherwise, take $b \in \partial K\cap \partial B_{\min\{\rho,|x-a|\}}(a)$, which exists since $ K$ is connected, due to \eqref{curve}. Thus, $b \in \partial K$ and $|b-a| = \min\{\rho,|x-a|\}$ by construction. Furthermore, we have
\[
\min\{\rho,|x-a|\} \le |x -a | \le |x-y| + |y-a| \le \dist(x,\partial K)+ \frac{\min\{\rho,|x-a|\}}{10}.
\]
Thus,
$
\min\{\rho,|x-a|\} \le C\dist(x,\partial K),
$
and hence
\[
|x - b| \le |x-y| + |y-a| + |a-b| \le \dist(x,\partial K) + \frac{11}{10}\min\{\rho,|x-a|\} \le C\dist(x,\partial K).
\]
We can thus set $b(x) := b$. Now we define $s_j := b(x_j)$.
\\
\\
Using this setup, namely \eqref{FLIP}-\eqref{C1}-\eqref{DFDF}-\eqref{DFDF2}-\eqref{C2} and the definition of $F',x_j$ and $s_j$, it is possible to show that $F'$ is the required extension of $F$. We will omit the details.
\end{proof}

\begin{remark}\label{remarkone}
Let us briefly explain how to use Lemma \ref{lem:extension} to extend $F''$ from $\pi_q(S')$ to $\R^2$ as we did in the proof of Theorem \ref{pointrigid}. In particular, let us explain how to construct the curves $\gamma_i$ satisfying \eqref{curve}. Due to the regularity of $S$, and hence of $S'$, the only delicate region to construct the curves is near the vertex $a=\pi_q(\tilde p)$. However, the regularity of $S$ and in particular Lemma \ref{MIERSE} easily provides us with a diffeomorphism $P:T_\delta \to V \cap \tilde T$, where $V$ is a neighborhood of $\pi_q(\tilde p)$, and $P \in C^{1,1}(T_\delta) \cap C^{2,\eta}(T_\delta \setminus \{0\})$ with the usual estimates on $[D^2P]_{C^\eta(T_\delta \setminus B_R(0))}$. Having fixed $x,y \in \tilde T$, let $p = P^{-1}(x), q = P^{-1}(y)$. If $p = re^{i\theta_1}$, $q = se^{i\theta_2}$, $\theta_i \in (-\pi,\pi)$ for all $i$, then the curve $\gamma_1$ of \eqref{curve} is given by the image through $P$ of the curve
\[
\alpha_1(t) = (tr + (1-t)s)e^{i(t\theta_1+(1-t)\theta_2)}, \text{ for } t \in [0,1].
\]
The curve $\gamma_2$ is simply the image through $P$ of the segment connecting $p$ and $q$.
\end{remark}

\begin{lemma}\label{lem:regg}
Let $T$ and $\tilde T$ be curvilinear triangles in the sense of Definition \ref{curvtrian}. Let $x_1,x_2,x_3$ and $y_1,y_2,y_3$ be the vertices of $T$ and $\tilde T$ respectively, and suppose that the angles $\gamma_{i}$ are the same at $x_i$ and $y_i$, for all $i$. Let $g$ be the unique biholomorphism given by the Riemann Mapping theorem between $\intt T$ and $\intt \tilde T$ which maps $x_i$ to $y_i$ for all $i$. Then, there exists $\delta > 0$ such that
\begin{equation}\label{gprop1}
g \in C^{2,\delta}(T\setminus \{x_1,x_2,x_3\},\tilde T), \quad g' \in L^\infty(T,\tilde T),
\end{equation}
and for all $i = 1,2$ and all $z$ sufficiently close to $x_i$
\begin{equation}\label{gprop2}
c|z - x_i| \le |g(z) - g(x_i)| \le C|z-x_i|
\end{equation}
and for all sufficiently small $R > 0$,
\begin{equation}\label{gprop3}
\|g''\|_{L^\infty(T\setminus [{B_R(x_1)\cup B_R(x_2)\cup B_R(x_3)}])} \le \frac{C}{R^{1-\delta}}.
\end{equation}
\end{lemma}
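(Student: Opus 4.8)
The plan is to combine the classical boundary regularity of conformal maps on the smooth parts of $\partial T$ with a local analysis at each vertex that crucially exploits the matching of the angles $\gamma_i$. First I would note that, since $T$ and $\tilde T$ are Jordan domains, Carath\'eodory's theorem extends $g$ to a homeomorphism $T \to \tilde T$; the prescribed correspondence $g(x_i) = y_i$ of three boundary points fixes the map, so the three vertices are sent to vertices and the arcs $\sigma_i$ of $\partial T$ to the corresponding arcs of $\partial \tilde T$. Away from the vertices the boundary arcs are $C^{2,\eta}$ by Definition \ref{curvtrian}\eqref{intx}, so by the Kellogg--Warschawski boundary regularity theorem for conformal maps onto domains with $C^{2,\eta}$ boundary arcs, $g$ extends to a $C^{2,\delta}$ map up to each open arc on subsets bounded away from $\{x_1,x_2,x_3\}$, for some $\delta\in(0,\eta)$. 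Together with interior holomorphic regularity this yields $g \in C^{2,\delta}(T\setminus\{x_1,x_2,x_3\})$ with $g'\neq 0$ there, establishing \eqref{gprop1}.

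The heart of the matter is the behaviour near a vertex. I would fix $i$, translate so that $x_i = y_i = 0$, and use the straightening charts $F := F_{x_i}$ and $\tilde F := F_{y_i}$ furnished by Definition \ref{curvtrian}\eqref{badv}, which map neighborhoods of the vertex in $T$ and $\tilde T$ onto the \emph{same} standard sector $W_i := \{Re^{i\theta}: R\in[0,1],\ \theta\in[-\gamma_i/2,\gamma_i/2]\}$, with $DF(0)$ and $D\tilde F(0)$ positive multiples of rotations and with the quantitative $D^2$ blow-up stated there. Composing with the power map $\Lambda(w):=w^{\pi/\gamma_i}$, which sends $W_i$ conformally onto a half-disk, I consider $G := (\Lambda\circ\tilde F)\circ g\circ(\Lambda\circ F)^{-1}$, a homeomorphism of the half-disk onto itself fixing $0$ and carrying the diameter to the diameter. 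Because $g$ is holomorphic and the straightening maps are $C^{1,\eta}$ diffeomorphisms that are \emph{conformal to first order at the vertex} (this is exactly the role of $DF(0)$ being a rotation-dilation), $G$ solves a Beltrami-type uniformly elliptic system whose dilatation is $C^\eta$ and \emph{vanishes at $0$}. The regularity theory for such equations (interior and up to the flat boundary, as in \cite{JOST,GT}) then gives $G\in C^{1,\delta}$ up to $0$ with $G'(0)\neq 0$. Unwinding, $g = \tilde F^{-1}\circ\Lambda^{-1}\circ G\circ\Lambda\circ F$ near the vertex; since $F,\tilde F$ are bi-Lipschitz there, $G$ is bi-Lipschitz at $0$, and the two power factors cancel precisely because the exponents $\pi/\gamma_i$ and $\gamma_i/\pi$ are reciprocal \emph{by the equality of the angles}, giving $c|z-x_i|\le|g(z)-g(x_i)|\le C|z-x_i|$, which is \eqref{gprop2}.

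For the quantitative estimate \eqref{gprop3} I would run, essentially verbatim, the dyadic rescaling argument used at the end of Lemma \ref{MIERSE}. Away from $0$ the coefficients of the elliptic system for $G$ and the data are smooth, so on each annulus $\{R\le|z|\le 2R\}$ I rescale by $R$, apply the Schauder estimates of \cite[Theorem 6.6]{GT} to the rescaled system, and feed in the controlled blow-up $\|D^2F\|_{L^\infty(V\setminus B_R(0))}\le C/R^{1-\eta}$ and $[D^2F]_{C^\eta(V\setminus B_R(0))}\le C/R$ from Definition \ref{curvtrian}\eqref{badv}, together with the bi-Lipschitz bound \eqref{gprop2} just obtained. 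This produces $\|g''\|_{L^\infty}\le C/R^{1-\delta}$ on each annulus; summing the three vertices and using \eqref{gprop1} on the complement of the balls yields \eqref{gprop3}.

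I expect the corner analysis to be the main obstacle. The arcs meeting at each vertex are only $C^{1,\eta}$ with a prescribed second-derivative blow-up, so one cannot invoke the analytic-corner expansions of Lehman or assume a clean asymptotic $g(z)-y_i\sim c(z-x_i)^{\gamma_i/\pi}$; instead one must realize $g$, after straightening and flattening, as a solution of a Beltrami system with continuous dilatation \emph{vanishing at the vertex} — which is precisely what the hypothesis $DF(0)\in\mathbb{R}_{>0}\cdot SO(2)$ guarantees — and then carefully propagate the quantitative rates through both the power map and the rescaling. All the technical weight of the proof lies in this bookkeeping of the blow-up exponents near the corner.
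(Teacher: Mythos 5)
Your route is genuinely different from the paper's at both stages: on the open arcs you invoke Carath\'eodory plus a local Kellogg--Warschawski theorem, where the paper instead runs the auxiliary-harmonic-function/Hopf-lemma argument of \cite[Problem 2, Section 5]{KRA}; at a vertex you conjugate $g$ by the full straightening charts composed with power maps, so that both domains become model half-disks and all the roughness is pushed into the PDE, whereas the paper conjugates $g$ only by the holomorphic power maps $f_\gamma$ (so the conjugated map $h=f_\gamma\circ g\circ f_{1/\gamma}$ stays holomorphic), proves via the Whitney-type Lemma \ref{lem:extension} that the opened domains $D_r,\tilde D_r$ are $C^{1,\eta/\gamma}$ near $0$, and then reuses the smooth-arc argument. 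Your unwinding of \eqref{gprop2} (the cancellation of the reciprocal power exponents, which is where the equality of the angles enters) and the annulus-rescaling scheme for \eqref{gprop3} are sound in principle. But there is a genuine gap at the central claim of the corner step.

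The problem is the assertion that $G:=(\Lambda\circ\tilde F)\circ g\circ(\Lambda\circ F)^{-1}$ ``solves a Beltrami-type uniformly elliptic system whose dilatation is $C^\eta$ and vanishes at $0$''. Write $\Phi_1=\Lambda\circ F$, $\Phi_2=\Lambda\circ\tilde F$ and $\psi=g\circ\Phi_1^{-1}$, so that $G=\Phi_2\circ\psi$ and $\mu_\psi=\mu_{\Phi_1^{-1}}$ (post-composition with the holomorphic $g$ does not change the Beltrami coefficient). The composition formula gives
\begin{equation*}
\mu_G(w)\;=\;\frac{\mu_{\Phi_1^{-1}}(w)+\theta(w)\,\mu_{\Phi_2}(\psi(w))}{1+\overline{\mu_{\Phi_1^{-1}}(w)}\,\theta(w)\,\mu_{\Phi_2}(\psi(w))},\qquad \theta(w)=\frac{\overline{\partial_w\psi(w)}}{\partial_w\psi(w)},
\end{equation*}
so $\mu_G$ contains the \emph{phase} of $\partial_w\psi$, i.e.\ essentially $\arg g'$ near the corner --- precisely the quantity whose behaviour at the vertex is not yet controlled (its control is part of what \eqref{gprop2}--\eqref{gprop3} assert). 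One can show that $|\mu_G(w)|\to 0$ as $w\to 0$ at a H\"older rate (using Mori's theorem for the a priori H\"older continuity of the quasiconformal $\psi$), but vanishing of the \emph{modulus} is not H\"older continuity of the \emph{coefficient}, and Schauder theory needs the latter: with a merely continuous dilatation one only gets $C^\alpha$ regularity for every $\alpha<1$, not $C^{1,\delta}$, and then neither $G'(0)\neq 0$ nor \eqref{gprop2} follows. The step can be repaired, but it requires an extra idea: either (i) rewrite the equation in the quasilinear form obtained from $\partial_{\bar w}\psi=\mu_{\Phi_1^{-1}}\partial_w\psi$ and $\psi=\Phi_2^{-1}\circ G$ by the chain rule, whose coefficients involve only the fixed functions $\mu_{\Phi_1^{-1}}(w)$ and $\mu_{\Phi_2^{-1}}(G(w))$ (no derivative phases); the a priori H\"older continuity of the quasiconformal $G$ then makes the frozen coefficients H\"older and vanishing at $0$, linear elliptic theory up to the flat boundary applies, and invertibility of $DG(0)$ follows by running the same argument for $G^{-1}$ and using the chain rule; or (ii) for \eqref{gprop2} alone, invoke the Teichm\"uller--Wittich--Belinskii theorem, since the H\"older vanishing rate of $|\mu_G|$ gives $\iint|\mu_G(w)|\,|w|^{-2}\,dA(w)<\infty$ and hence $\lim_{w\to 0}G(w)/w$ exists and is nonzero. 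As written, the vertex analysis does not go through; this is exactly the difficulty the paper sidesteps by never destroying the holomorphy of the conjugated map.
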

\begin{proof}[(Sketch of) Proof]
Since this result is rather classical and the details are quite lengthy, we will only sketch its proof. We need to handle separately the case $z_0 \in \partial T\setminus\{x_1,x_2,x_3\}$ and the case $z_0 = x_i$ for some $i$. We note that $g$ is a homeomorphism up to the boundary by \cite[Theorem 5.1.1]{KRA}. Let us start with the first case.
\\
\\
\fbox{$z_0 \in \partial T\setminus\{x_1,x_2,x_3\}$:} We set $y_0 = g(z_0)$. We employ the proof suggested in \cite[Problem 2, Section 5]{KRA}. Take a smooth closed Jordan curve $\Gamma$ in $\intt \tilde T$, and define $\Omega \subset \intt \tilde T$ to be the domain bounded by $\partial \tilde T$ and $\Gamma$. Observe that $\Omega$ is a $C^{2,\eta}$ domain at all points of $\partial \Omega = \partial \tilde T \cup \Gamma$ except for $\{y_1,y_2,y_3\}$, due to Definition \ref{curvtrian}\eqref{intx}. Furthermore, since $g$ is biholomorphic from $\intt T$ to $\intt \tilde T$ and $g$ is a homeomorphism that maps vertices into vertices, we have that
$
\Omega' = g^{-1}(\Omega)
$
is again a $C^{2,\eta}$ set except for the points $\{x_1,x_2,x_3\}$. After a rotation, we can suppose that the normal $n(y_0)$ to $ \partial \tilde T$ at $y_0 \in \partial \tilde T \subset \partial \Omega$ is given by $n(y_0) = -e_1 = (-1,0)$. Thus, since $\tilde T$ is a $C^{2,\eta}$ set near $y_0$, we can suppose that $\rho$ is sufficiently small such that 
\begin{equation}\label{ne1}
|n(y)+e_1| \le \eps, \quad \forall y \in \partial \Omega \cap B_\rho(y_0) = (\partial \tilde T)\cap B_\rho(y_0),
\end{equation}
for some $\eps > 0$ to be chosen later. Now we solve the Dirichlet problem
\[
\begin{cases}
\Delta f = 0, \text{ in }\Omega,\\
f = \varphi, \text{ in }\partial \Omega,
\end{cases}
\]
where $\varphi$ is a smooth non-negative function with compact support in $\R^2$ with $\varphi \equiv 0$ on $\Gamma$ and $\varphi \equiv 1$ on $\partial \tilde T$. Since $\Omega$ is a Lipschitz domain due to Definition \ref{curvtrian} and our choice of $\Gamma$, this problem is solvable for some $f \in W^{1,2}(\Omega)$ via classical variational methods, with the boundary datum attained in the sense of traces. We have that $f$ is smooth in the interior and $C^{2,\eta}$ up to the boundary except possibly at the points $\{y_1,y_2,y_3\}$ by classical regularity theory. Furthermore, since $\varphi \equiv 1$ on $\partial \tilde T$ and $\varphi \equiv 0$ on $\Gamma$, the classical maximum principle and Hopf Lemma \cite[Section 6.4.2]{EVAPDE} imply the existence of $c = c(z_0)> 0$ such that:
\begin{equation}\label{fnormal}
|Df(y)| = |(Df(y),n(y))| \ge c, \quad \forall y \in \partial \Omega \cap \partial \tilde T \cap B_\rho(g(z_0)).
\end{equation}
Consider now $\tilde f:= f \circ g$, which, since $g$ fulfills \eqref{A1}, solves
\[
\begin{cases}
\Delta \tilde f = 0, \text{ in }\Omega'\\
\tilde f = 1 \text{ on } \partial T, \quad \tilde f = 0 \text{ on } g^{-1}(\Gamma).
\end{cases}
\]
The same reasoning as above yields that $\tilde f$ is $C^{2,\eta}$ up to the boundary at all points of $T$ except possibly at $\{x_1,x_2,x_3\}$. Thus, for all $z \in \intt T$ near $z_0$ we have for some constant $C = C(z_0) > 0$ and for all $i =1,2$:
\begin{equation}\label{trick}
|(\partial_ig(z),Df\circ g(z))| =|\partial_i(f\circ g)| =  |\partial_i\tilde f|(z) \le C.
\end{equation}
The regularity of $f$ and the fact that $g$ is a homeomorphism up to the boundary allows us to combine \eqref{ne1} and \eqref{fnormal} to deduce that, if $\eps$ and $\rho$ are chosen sufficiently small,
\[
|(\partial_i g(z),e_1)| \le 2C, \quad \forall i =1,2.
\]
Since $g$ solves the Cauchy-Riemann equations in the interior of $T$, the latter is enough to show that $Dg$ stays bounded near $z_0$. Similar computations can be performed for every  higher order derivative, through which we deduce that $g$ is $C^{2,\eta}$ up to the boundary near $z_0$.
\\
\\
\fbox{$z_0 = x_i$ for some $i$:} As above, let $y_0 = g(z_0)$. Upon translating, we can assume $z_0 = 0, y_0= 0$. The corresponding angle is $\gamma_i = \frac{\pi}{\gamma}$, with $\gamma > 1$. The idea is to reduce ourselves to a case analogous to the previous one by \emph{opening up} the sets $T$ and $\tilde T$ near the relevant vertices. Let $F_0: V \to B_1(0)$ and $\tilde F_0: \tilde V \to B_1(0)$ be the diffeomorphisms provided by \eqref{badv} of Definition \ref{curvtrian}. From \eqref{badv} of Definition \ref{curvtrian}, we know that the differentials at $0$ of the maps are positive multiples of rotations. Thus, we rotate the domains in such a way that the differentials at $0$ are positive multiples of the identity. It is convenient to define, for all $\alpha > 0$, the cone
\[
C_\alpha := \left\{R(\cos \theta ,\sin \theta): \theta \in \left[-\frac{\pi}{2\alpha},\frac{\pi}{2\alpha}\right]\right\}.
\]
In particular, we find that, for all $\gamma' < \gamma$, there exists $r > 0$ such that
\begin{equation}\label{asymdom}
T\cap B_r(0) \subset C_{\gamma'}\quad \text{ and } \quad \tilde T\cap B_r(0) \subset C_{\gamma'}.
\end{equation}
Furthermore, in the complex notation $\R^2\sim \mathbb C$, we consider the \emph{opening} maps: for $\alpha>0$ define $f_\alpha := z^\alpha$, for which we choose the following determination. In polar coordinates, if $\theta \in (-\pi,\pi)$, $R \ge 0$,
\[
f_{\alpha}(R\cos\theta+iR\sin\theta) := R^\alpha\cos(\alpha\theta)+iR^\alpha\sin(\alpha\theta).
\]
In particular, $f_\alpha$ is defined in $O := \mathbb C \setminus\{x+iy: x < 0\}$, is holomorphic in $\mathbb C \setminus\{x+iy: x \le 0\}$ and continuous in its domain of definition. Employing \eqref{asymdom}, we choose $r>0$ sufficiently small so that 
$
T \cap B_r(0) \subset V$ and $D_r := f_{\gamma}(T\cap B_{r}(0))\subset O.
$
We also set
$
\tilde D_r := f_{\gamma}(g(T\cap B_r(0))).
$
Again, if $r$ is small enough, we can assume that
$
\tilde D_r \subset \tilde V \text{ and }\tilde D_r \subset O.
$
We \emph{open} the vertices by considering
$
G := f_{\gamma}\circ F_0\circ f_\frac{1}{\gamma} \text{ and } \tilde G := f_{\gamma}\circ \tilde F_0\circ f_\frac{1}{\gamma},
$
defined in $D_r$ and $\tilde D_r$ respectively. Lengthy but direct computations show that, using the regularity properties of $F_0$, $\tilde F_0$ expressed in \eqref{badv} of Definition \ref{curvtrian}, if $r>0$ is chosen sufficiently small, then 
\[
G \in C^{1,\frac{\eta}{\gamma}}(D_r,\R^2) \cap C^{2,\frac{\eta}{\gamma}}(D_r\setminus \{0\},\R^2), \quad \tilde G \in C^{1,\frac{\eta}{\gamma}}(\tilde D_r,\R^2) \cap C^{2,\frac{\eta}{\gamma}}(\tilde D_r\setminus \{0\},\R^2)
\]
and for all $R>0$ sufficiently small,
\begin{equation}\label{Gexp}
\|D^2G\|_{L^\infty(D_r \setminus B_R(0))},\|D^2\tilde G\|_{L^\infty(\tilde D_r \setminus B_R(0))} \le \frac{C}{R^{1-\frac{\eta}{\gamma}}}; \qquad [D^2G]_{C^\frac{\eta}{\gamma}(D_r \setminus B_R(0))}, [D^2\tilde G]_{C^\frac{\eta}{\gamma}(\tilde D_r \setminus B_R(0))} \le \frac{C}{R}.
\end{equation}
Moreover,
\begin{equation}\label{derG}
DG(0) = a^\gamma \id \qquad \mbox{and} \qquad D\tilde G(0) = (\tilde a)^\gamma \id,
\end{equation}
provided $DF_0(0) = a\id$, $D\tilde F_0(0) = \tilde a \id$. Using Lemma \ref{lem:extension}, we can extend $G$ and $\tilde G$ to maps defined in a full neighborhood of $0$ and having the same regularity properties. We will not denote these extensions differently. Finally, due to \eqref{derG}, we may restrict these neighborhoods and still enforce that $G$ and $\tilde G$ are diffeomorphisms between neighborhoods of $0 \in \R^2$. This shows that $D_r$ and $\tilde D_r$ are classical $C^{1,\frac{\eta}{\gamma}}$ sets close to $0$. To conclude, define
$
h := f_{\gamma}\circ g \circ f_{\frac{1}{\gamma}}
$
on $D_r$. Notice that this is still a biholomorphic map from the interior of $D_r$ to the interior of $\tilde D_r$, which is a homeomorphism up to the boundary. By using arguments similar to the ones of the previous step, we can thus show that, for all $R > 0$ sufficiently small,
\begin{equation*}
h \in C^{1,\frac{\eta}{\gamma}}(D_r,\tilde D_r),\quad \Re(h'(0))\neq 0,\quad \text{ and } \quad  \|h''\|_{L^\infty(D_r\setminus B_R(0))} \le \frac{C}{R^{1-\frac{\eta}{\gamma}}},
\end{equation*}
where $\Re$ denotes the real part of a complex number.
Finally, we can transform back the map as
$
g = f_{\frac{1}{\gamma}}\circ h \circ f_\gamma,
$
and again direct computations show that, for some $\delta > 0$, $g \in C^{1,\delta}(T\cap B_r(0))\cap C^{2,\delta}(T\cap B_r(0)\setminus\{0\})$ and $g$ enjoys properties \eqref{gprop2}-\eqref{gprop3}, as wanted. This concludes the proof.
\end{proof}

\section{The double bubble: $k = 2$}\label{main_double}

Throughout this section, we assume that $k = 2$. This corresponds to considering the critical points of the \emph{double bubble} problem.

\begin{Teo}\label{doublethm}
Assume $E_1,E_2$ are convex sets such that $V_{\E}$ for $\E = \{E_1,E_2\}$ satisfies \eqref{doubprob}. Then either $E_1,E_2$ are disjoint (possibly tangent) balls of equal volume or they are the standard double bubble.
\end{Teo}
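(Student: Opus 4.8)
The plan is to feed the stationarity of $V_\E$ into the structural results of Section \ref{s:pre}, reduce the classification to two model geometries, and treat these respectively by Alexandrov's theorem and by the moving plane method. First I would record the three consequences of \eqref{doubprob} that drive everything: by Corollary \ref{corstat} the varifold $V_\E$ has bounded mean curvature, each free sheet $\llbracket\partial E_i\setminus\partial E_j\rrbracket$ has constant mean curvature $\lambda_i$, and $\lambda_1=\lambda_2$ whenever $\mathcal{H}^{n-1}(\partial E_1\cap\partial E_2)\neq 0$; by Proposition \ref{reg} each free sheet is a smooth CMC hypersurface. The decisive consequence of convexity is that the contact set $I:=\overline{E_1}\cap\overline{E_2}$ is \emph{flat}: being the intersection of two convex sets with disjoint interiors it is convex with empty interior, hence it lies in a hyperplane $\pi$, which is then a separating hyperplane for $E_1,E_2$ in the sense of Section \ref{s:not}. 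The proof now splits according to whether $I$ is $(n-1)$-dimensional or of lower dimension.

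\textbf{No interface: $\mathcal{H}^{n-1}(I)=0$.} Here $I$ is a convex subset of $\pi$ of dimension at most $n-2$, so $p_\pi(I)$ is $\mathcal{H}^{n-1}$-null. Writing $\partial E_i$ near any of its points as the graph of a convex function $\varphi$ via Proposition \ref{graph}, the CMC equation $\dv(D\A(D\varphi))=\lambda_i$ holds off the null convex set $p_\pi(I)$, so Lemma \ref{div} (applied with $\Phi=D\A(D\varphi)$ and $S=p_\pi(I)$) promotes it to hold across $I$. Proposition \ref{reg} then yields smoothness, and the blow-up criterion of Corollary \ref{regularconv} gives the $C^1$ closure, so each $\partial E_i$ is a closed, smooth, convex hypersurface of constant mean curvature. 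Alexandrov's theorem forces $E_i$ to be a ball. The two balls have disjoint interiors and can meet at most tangentially along $I$; this is the first alternative. The equal-volume constraint of the statement is tied to the presence of a genuine symmetric interface, and is produced in the next case.

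\textbf{A genuine interface: $\dim I=n-1$.} Then $\mathcal{H}^{n-1}(I)>0$, so Corollary \ref{corstat} gives $\lambda_1=\lambda_2=:\lambda$, and $I$ is a flat $(n-1)$-dimensional convex disk in $\pi$ bounded by its relative boundary $\partial_\pi I$. At each point of $\partial_\pi I$ the blow-up of $V_\E$ is, by Corollary \ref{corstruct} together with the fact that blow-ups are stationary cones (Corollary \ref{blowcone}), the union of three half-hyperplanes meeting along a common $(n-2)$-plane; stationarity of such a $Y$-cone forces the three sheets to meet pairwise at $120$ degrees. Granting the regularity of each cap $S_i:=\overline{\partial E_i\setminus I}$ up to its free boundary $\partial_\pi I$ (Subsection \ref{subsubreg}), each $S_i$ is a smooth convex CMC hypersurface in the half-space on its side of $\pi$, with boundary on $\pi$ and constant contact angle $120^\circ$, that is, a capillary hypersurface in a half-space. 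As recalled in the introduction, in a half-space container the Alexandrov moving plane method applies: reflecting in hyperplanes orthogonal to $\pi$ and invoking the comparison principle for the CMC equation with the constant-angle condition shows each $S_i$ is rotationally symmetric, hence a spherical cap of radius $(n-1)/\lambda$. Since $S_1$ and $S_2$ share the boundary sphere $\partial_\pi I$, have the same radius, and meet $\pi$ at $120^\circ$ from opposite sides, they are reflections of one another across $\pi$; thus $E_1,E_2$ are congruent, of equal volume, and together with the flat interface $I$ they constitute exactly the standard double bubble.

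The genuine difficulties are concentrated in the interface case. The first is the regularity of the caps up to the free boundary $\partial_\pi I$, the capillary/free boundary regularity deferred to Subsection \ref{subsubreg}, which must be in place before the moving plane method can even be started. The second is converting the stationarity of the $Y$-cone blow-up into the pointwise $120^\circ$ contact condition along all of $\partial_\pi I$, the bridge between the local blow-up picture and the global capillary problem. By contrast, the flatness of the interface (pure convexity) and the removable-singularity step in the no-interface case are comparatively soft, and once the caps are known to be congruent spherical caps the identification with the standard double bubble is immediate.
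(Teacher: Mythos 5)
Your interface case follows essentially the paper's route (blow-up along $\partial_\pi I$, stationary $Y$-cone forcing $120$ degrees, free-boundary regularity, moving planes in a half-space, reflected spherical caps), but your no-interface case has a genuine gap. You apply Lemma \ref{div} with $\Phi=D\A(D\varphi)$ and $S=p_\pi(I)$ to push the CMC equation across the contact set, and only \emph{afterwards} invoke Corollary \ref{regularconv} for $C^1$ regularity. This is backwards: Lemma \ref{div} requires $\Phi$ to be continuous on the whole ball, i.e. it requires $D\varphi$ to be continuous across $p_\pi(I)$, which is exactly the regularity you have not yet established. Nor does it come for free from what you actually use. The facts deployed in your Case 1 --- convexity of each $E_i$ and constant mean curvature of each free sheet $\llbracket\partial E_i\setminus\partial E_j\rrbracket$ away from the contact --- are also satisfied by two congruent ``lens'' bodies (intersections of two balls of equal radius), reflected across $\pi$ so that they share their edge $(n-2)$-sphere: each body is convex, each free sheet consists of two spherical caps of identical mean curvature, the mutual contact set is $\Haus^{n-1}$-null, yet neither body is a ball and each boundary has a genuine corner along the edge. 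What kills this configuration is the stationarity of the \emph{cluster} $V_\E$ near $I$ (the conormals along the edge do not cancel, so the first variation has a singular part), and your Case 1 never uses \eqref{doubprob} near $I$. The paper sidesteps all of this with a global argument: plug a field $g$ equal to the identity near $\overline{E_1\cup E_2}$ into \eqref{doubprob}, apply the divergence theorem (the contact set does not contribute since it is $\Haus^{n-1}$-null), and conclude equality in the Heintze--Karcher inequality for convex bodies; its rigidity then forces each $E_i$ to be a ball, with no regularity across $I$ needed. Alternatively your route can be repaired as the paper does for the triple bubble (Section \ref{sec_3}, the proof that $E_3$ is a ball): first prove $C^1$ regularity across the contact set by a blow-up analysis, and only then apply Lemma \ref{div}; but that analysis is substantial and you do not supply it.

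A smaller omission in your interface case: Corollary \ref{corstruct}, which you invoke for the blow-up structure, has as hypothesis that $\overline{E_1}\cap\pi=\overline{E_2}\cap\pi$ is an $(n-1)$-dimensional convex set. This is the paper's \eqref{claim1} and is not free: a priori $\partial E_1\cap\pi$ could contain a flat piece extending beyond $\partial E_2$; the paper excludes this by noting that such a flat piece forces $\lambda_1=0$, whence Corollary \ref{statvarif} makes every component of $\partial E_1\setminus\pi$ planar, contradicting boundedness. Likewise, the passage from ``the blow-up is a stationary cone of the form $\llbracket K\rrbracket+\llbracket K_1\rrbracket+\llbracket K_2\rrbracket$'' to ``it is a $Y$-cone meeting at $120$ degrees'' requires ruling out the degenerate cases where $K_1$ or $K_2$ lies inside $\pi$ (the paper's proof of \eqref{claim2}); you assert it. Apart from these points, your Case 2 matches the paper's proof, and you correctly isolate the free-boundary regularity and the cone-to-angle step as the crux.
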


\begin{proof}
We start by noticing that, as $E_1,E_2$ are convex, then by Hahn-Banach there exists a hyperplane $\pi$ separating the two convex sets. We can, up to rotating and translating, assume that $\pi=e_n^\perp$. Furthermore, $\partial E_1 \cap \partial E_2 \subset \pi$, $E_1 \subset \{x_n>0\}$ and $E_2 \subset \{x_n<0\}$. In particular $\partial E_1 \cap \partial E_2$ is convex. We divide the proof in two cases: $\Haus^{n-1}(\partial E_1\cap \partial E_2) = 0$ and $\Haus^{n-1}(\partial E_1\cap \partial E_2) \neq 0$.

\vspace{0.1cm}
\noindent
\fbox{\emph{Case 1: $\Haus^{n-1}(\partial E_1\cap \partial E_2) = 0$.}} We plug in \eqref{doubprob} any vector field $g$ that coincides with the identity in a neighborhood of $\overline{E_1\cup E_2}$. By the divergence theorem and Lemma \ref{lele}, we compute
\begin{align*}
\lambda_1n|E_1| + \lambda_2n|E_2|&= \sum_{i = 1}^2\lambda_i\int_{\partial^*E_i}(n_{E_i},g)d\mathcal{H}^{n-1}\overset{\eqref{doubprob}}{=} \int_{\partial^* E_1 \cup \partial^* E_2}\langle T_x\Gamma_{\E}, Dg\rangle d\Haus^{n -1}\\
&= \int_{\partial^* E_1 \cup \partial^* E_2}\langle T_x\Gamma_{\E}, \id_n\rangle d\Haus^{n -1}= \int_{\partial^* E_1 }\langle T_x\Gamma_{E_1}, \id_n\rangle d\Haus^{n -1} + \int_{ \partial^* E_2}\langle T_x\Gamma_{E_2},\id_n\rangle d\Haus^{n -1}\\
&= (n-1)\Haus^{n -1}(\partial^* E_1) + (n-1)\Haus^{n -1}(\partial^* E_2),
\end{align*}
where the fourth equality follows from $\mathcal{H}^{n-1}(\partial E_1 \cap \partial E_2)= 0$. In particular, it follows that
\begin{equation}\label{eq12}
\lambda_1|E_1| + \lambda_2|E_2| = \frac{n - 1}{n}\left(\Haus^{n -1}(\partial^* E_1) +\Haus^{n -1}(\partial^* E_2)\right).
\end{equation}
Since $\partial E_1$ and $\partial E_2$ have constant mean curvature in $\{x_n > 0\}$ and $\{x_n < 0\}$ respectively, then by Proposition \ref{reg} they are smooth submanifolds when restricted respectively to $\{x_n > 0\}$ and $\{x_n < 0\}$. 
In particular, denoting by $H_i$ the pointwise mean curvature for the convex set $E_i$ as introduced in \cite[Section 2]{SAN1}, by \eqref{doubprob} we deduce that $H_i = \lambda_i$ holds $(\mathcal{H}^{n-1}\llcorner \partial E_i)$-a.e., for $i = 1,2$. By the Heintze-Karcher inequality \cite[Theorem 1.2]{SAN1}
\begin{align*}
\lambda_1|E_1| + \lambda_2|E_2| &\le \frac{n-1}{n}\lambda_1\int_{\partial^*E_1}\frac{d\Haus^{n-1}}{H_1} +  \frac{n-1}{n}\lambda_2\int_{\partial^*E_2}\frac{d\Haus^{n-1}}{H_2}\\
& = \frac{n-1}{n}\Haus^{n-1}(\partial E_1) +  \frac{n-1}{n}\Haus^{n-1}(\partial E_1).
\end{align*}
The latter, combined with \eqref{eq12}, implies that equality holds in \cite[Theorem 1.2]{SAN1}, and by the rigidity part of \cite[Theorem 1.2]{SAN1} applied separately to $E_1$ and $E_2$, we find that $E_1$ and $E_2$ must be balls. Since they are disjoint, either $\overline{E_1}$ and $\overline{E_2}$ are disjoint or they touch tangentially. This concludes the proof of this case.

\vspace{0.1cm}
\noindent
\fbox{\emph{Case 2: $\Haus^{n-1}(\partial E_1\cap \partial E_2) \neq 0$.}}
We make a list of claims and show how these imply that $V_\E$ must be the standard double bubble. We defer the proof of these claims to the end of the section. First, we claim that
\begin{equation}\label{claim1}\tag{Claim 1}
\partial E_1 \cap \pi = \partial E_2 \cap \pi.
\end{equation}
If \eqref{claim1} holds, we can apply Corollary \ref{corstruct}. Using this, we claim that at every point $x \in \partial_\pi(\overline{E_1}\cap \pi)$,
\begin{equation}\label{claim2}\tag{Claim 2}
\text{the blow-up of }V_\E \text{ at $x$ is a $Y$ cone containing a half hyperplane contained in $\pi - x$.}
\end{equation}
and
\begin{equation}\label{claim3}\tag{Claim 3}
\partial E_i \cap \pi \text{ is a $C^\infty$ domain in $\pi$ and $\overline{\partial E_i \setminus \pi}$ is a smooth manifold with boundary for every $i=1,2$}.
\end{equation}

From \eqref{claim2} and Corollary \ref{corstruct} we infer that the blow-up $W_x$ of $V_\E$ at $x \in \partial_\pi(\overline{E_1}\cap \pi)$ is a $Y$ cone:
\[
W_x =  \llbracket K\rrbracket + \llbracket K_1\rrbracket + \llbracket K_2\rrbracket,
\]
where $K$, $K_1$, $K_2$ are half hyperplanes intersecting at the $(n - 2)$-dimensional tangent plane $T_x(\partial_\pi(\overline{E_1}\cap \pi))$, with $K_1 \subset \{x_n \ge 0\}$ and $K_2 \subset \{x_n \le 0\}$. Since $\llbracket K\cup K_i\rrbracket$ is the blow-up of $\llbracket \partial E_i\rrbracket$ at $x$, we infer that $\partial E_i$ forms an angle of $120$ degrees with $\pi$ at every $x \in \partial_\pi(\overline{E_i}\cap \pi)$, for all $i = 1,2$. This, combined with the fact that $\partial E_1\cap \{x_n > 0\}$ and $\partial E_2\cap \{x_n < 0\}$ have constant mean curvature and the regularity of $\partial E_i\cap \pi$ of \eqref{claim3}, allows us to apply Alexandrov moving plane method \cite[Theorem 4.1.16]{LOP}. We remark that, although \cite[Theorem 4.1.16]{LOP} is stated in $\R^3$, its proof generalizes to $\R^n$ with no major changes. We deduce that $E_1$ and $E_2$ are balls intersected respectively with $\{x_n > 0\}$  and $\{x_n < 0\}$. Since $\overline{E_1}\cap \pi=\overline{E_2}\cap \pi$ and $\partial E_i$ forms an angle of $120$ degrees with $\pi$, then we deduce that  $V_\E$ is the standard double bubble.
\end{proof}

\subsubsection{Proof of \ref{claim1}:}
By contradiction, suppose \eqref{claim1} does not hold. Since $\partial E_1 \cap \pi$ is a convex set and $\Haus^{n-1}(\partial E_1\cap \pi) \neq 0$, then it has non-empty interior in $\pi$. Then, without loss of generality, we can assume that there exists a point $x \in \intt_\pi(\partial E_1\cap \pi)$ such that $x \notin \partial E_2$. This implies that there exists a ball $B^\pi_\delta(x)$ such that $B^\pi_\delta(x)\subset \partial E_1\setminus \partial E_2$. Hence, by Corollary \ref{corstat}, $\llbracket\partial{E_1}\setminus \partial E_2\rrbracket$ is a varifold with zero mean curvature. As $\partial E_1$ is the boundary of a convex set, Corollary \ref{statvarif} shows that each connected component $T$ of $\partial{E_1}\setminus \pi$ is contained in a hyperplane $\pi_T$. We observe that $\partial{E_1}\setminus \pi$ has only one connected component: indeed, if we fix two points $x,y\in \partial{E_1}$, and the segment $L_{x,y}\subset \overline{E_1}$ joining them, it is enough to radially project $L_{x,y}$ into $\partial{E_1}\setminus \pi$ from any point $z \in \partial{E_1}\cap \pi$, to obtain a path connecting $x$ and $y$ in $\partial{E_1}\setminus \pi$. This in turn implies that $\partial{E_1}$ is contained in the union of two hyperplanes, which contradicts the fact that $E_1$ is an open bounded nonempty set.

\subsubsection{Proof of \ref{claim2}:}\label{proofofclaim2} We assume without loss of generality that $x = 0$. By Corollary \ref{corstruct}, the blow-up $W_x$ of $V_\E$ at $x \in \partial_\pi(\overline{E_1}\cap \pi)$ is unique and can be obtained as
 \[
W_x = \llbracket K\rrbracket + \llbracket K_1\rrbracket + \llbracket K_2\rrbracket.
\]
Since $V_\E$ is stationary for the double bubble problem, the varifold $W_x$ must be stationary for the area functional, see Corollary \ref{blowcone}.
 We wish to say that $K$, $K_1$, $K_2$ are three half hyperplanes intersecting at $120$ degrees on an $(n-2)$-dimensional plane $L$ contained in $\pi$ and passing through to $0$. We need to consider different cases, depending on whether $K_i$ is contained in $\pi$ or not.
\\
\\
\fbox{$K_1$ and $K_2$ are contained in $\pi$.} 
By Proposition \ref{uniquecone} and Corollary \ref{corstruct} we deduce that $K\cup K_1=\pi $ and $K\cup K_2=\pi $. In particular $K_1=K_2$. It follows that $W_x=2\mathcal{H}^{n-1}\llcorner (\pi \setminus K) +\mathcal{H}^{n-1}\llcorner K$. This is in contradiction with the constancy theorem for stationary varifolds, see \cite[Theorem 8.4.1]{SIM}. Hence this case is not possible.
\\
\\
\fbox{Only one between $K_1$ and $K_2$ is contained in $\pi$.} Without loss of generality, we can suppose $K_1 \subset \pi$. By Proposition \ref{uniquecone} and Corollary \ref{corstruct} we deduce that $K\cup K_1=\pi $. We first observe that $K_2\cap \{x_n < 0\}$ is connected, otherwise $K_2\cup K$ could not be the boundary of a convex cone. By Corollary \ref{corstruct}, we have that $K_2$ is a graph of a convex function $u$ over a subset of a hyperplane $\pi'$. Since $W_x\llcorner \{x_n < 0\} = \llbracket K_2\cap \{x_n < 0\}\rrbracket$ and $W_x$ is stationary, we then find that $\llbracket K_2\rrbracket$ is stationary in $\{x_n < 0\}$. By Proposition \ref{rigconv} and the connectedness of $K_2\cap \{x_n < 0\}$, it follows that $u$ is affine, and hence $K_2$ is subset of a hyperplane. Since  $K_i\cap K = \partial_\pi K$, for $i=1,2$, see Corollary \ref{corstruct}, it then follows that $K,K_1$ an $K_2$ are all half hyperplanes meeting at a common $(n-2)$-dimensional plane. Denoting with $T,T_i$ the tangent hyperplanes to $K$ and $K_i$ respectively, then the stationarity of $W_x$ yields
\begin{equation}\label{zeroeq}
\begin{split}
0 =  \left\langle T,\int_{K}Dg(x) d\Haus^{n-1}(x)\right\rangle + \left\langle T_1,\int_{K_1}Dg(x) d\Haus^{n-1}(x)\right\rangle + \left\langle T_2,\int_{K_2}Dg(x) d\Haus^{n-1}(x)\right\rangle,
\end{split}
\end{equation}
for all $g \in C^\infty(\R^n,\R^n)$. If $T = \id_n - v\otimes v$, $T_i = \id_n - v_i\otimes v_i$ with $|v|,|v_i| = 1$ for $i = 1,2$, one can see that \eqref{zeroeq} is equivalent to $v + v_1 + v_2 = 0$, up to changing orientation to the vectors $v,v_1,v_2$. The only solution to this equation among unit vectors is precisely given by triples of vectors lying on a two-dimensional plane that pairwise form an angle of $120$ degrees. Since $K\cup K_1=\pi$, though, their normals $v$ and $v_1$ must coincide (up to a change of orientation). Hence we discard also this case.
\\
\\
\fbox{$K_1$ and $K_2$ are not contained in $\pi$.} Arguing as in the previous case, it follows that $K_1$ and $K_2$ are half hyperplanes meeting at a common $(n-2)$-dimensional plane, and hence also $K$ is a half hyperplane contained in $\pi$ intersecting the same $(n-2)$-dimensional plane. As in \eqref{zeroeq}, by stationarity we infer that $W_x$ is a $Y$ cone.

\subsubsection{Proof of \ref{claim3}}\label{subsubreg}
By Corollary \ref{corstruct}, $K$ coincides with the blow-up at $x$ of $\overline{E_1}\cap \pi$. Since $K$ must be a half hyperplane (as proved above), it follows that the blow-up at any $x \in \partial_\pi(\overline{E_1}\cap \pi)$ is an $(n-2)$-dimensional plane. From Corollary \ref{regularconv} we deduce that $\partial E_i \cap \pi \text{ is a $C^1$ domain in $\pi$}$. In order to prove \eqref{claim3}, let us show how to upgrade this regularity to $C^\infty$. To this aim, we fix $x \in \partial E_1\cap \pi$. Since $W_x$ is a $Y$ cone and hence $\partial E_i$ intersects the plane $\pi$ at an angle of 120 degrees, we can parametrize $\partial E_i$ over $\pi$ in the sense of Proposition \ref{graph} in a small neighborhood of $x$. Thus, we find $\delta > 0$ and a convex, Lipschitz, non-negative function $u: B = B_\delta^{\pi}(x) \to \R$ that fulfills:
\begin{equation}\label{sys:reg}
\begin{cases}
u = 0, &\text{ on }\Omega_0 := \partial E_1 \cap B,\\
\dv(D\A(Du)) = \lambda, &\text{ in }\Omega := B \setminus \partial E_1,\\
(D\A(Du),n) = \cos\left(\frac{\pi}{6}\right) = \frac{\sqrt{3}}{2}, &\text{ on } \Gamma :=\partial_{\pi}(\partial E_1 \cap \pi)\cap B,
\end{cases}
\end{equation}
where $n$ is the unit outer normal of $\Omega_0$, which is defined at every point of $\Gamma$ since $\Gamma$ is $C^1$. We choose $\delta > 0$ sufficiently small so that $\Omega$ is connected. To justify the second line of \eqref{sys:reg} we can employ Corollary \ref{corstat} and Proposition \ref{varigraph}, while the third line can be deduced from \eqref{claim2}. Although $u$ is just Lipschitz in $B$, we can compute $Du$ at every point of $\Gamma$ by using directional derivatives along the directions entirely contained in $\Omega$. The latter exist by our blow-up analysis and Proposition \ref{uniquecone}. We wish now to show that $u$ is actually $C^{1,1}$ up to $\Gamma$. Notice that by Proposition \ref{reg}, $u$ is smooth in $\Omega$. Thus we can consider the pointwise Hessian of $u$. We can rewrite the PDE of \eqref{sys:reg} in its strong form
\[
\langle D^2\A(Du(y)), D^2u(y)\rangle = \lambda.
\]
As $Du \in L^\infty(\Omega)$, we see that there exists a constant $\rho > 0$ such that $\rho \id \le D^2\A(Du(y))$ for all $y \in \Omega$ in the sense of quadratic forms. Since $u$ is convex, $D^2u(y)$ is non-negative definite in $\Omega$, and thus there exists $C= C(\rho) > 0$ such that
\[
C|D^2u(y)| \le \langle D^2\A(Du(y)), D^2u(y)\rangle = \lambda.
\]
Therefore, $D^2u(y)$ is uniformly bounded in $\Omega$. Since $\Omega$ is a $C^1$ domain in $B$, we conclude that $u \in C^{1,1}(\overline \Omega)$.
In particular, $\Gamma$ is a $C^{1,1}$ $(n-2)$-dimensional manifold and $\Omega$ is a $C^{1,1}$ domain in $B$. Indeed, since $u = 0$ on $\Gamma$ and $\Gamma$ is a $C^1$ manifold, we have that its unit normal is given by
\begin{equation}\label{normal}
n = \frac{Du}{|Du|}, \qquad \mbox{at all points of $\Gamma$ where $|Du|\neq 0$.}
\end{equation}
By the third line of \eqref{sys:reg}, we deduce that $|Du|$ is bounded from below in a neighborhood of $\Gamma$, thus \eqref{normal} implies that $\Gamma$ is as regular as $u$ is. The procedure to conclude that $u$ is actually smooth in $\overline{\Omega}$ and that $\Gamma$ is a smooth manifold is well-known and employs the hodograph transform and the classical boundary regularity of \cite{ADN}. For details, see \cite[Corollary 5.6]{MNE} and \cite[Theorem 5.2]{KNS}. Thus, $\partial E_i \cap \pi \text{ is a $C^\infty$ domain in $\pi$}$ and $\overline{\partial E_i \setminus \pi}$ is a smooth manifold with boundary.

\section{The triple bubble: $k = 3$}\label{sec_3}

Throughout this section, we assume that $k = 3$ and $n = 3$. This corresponds to considering the critical points of the \emph{triple bubble} problem in $\R^3$. For convenience, let us introduce the following terminology. Let $C_1,C_2$ be two disjoint, nonempty, open and bounded convex sets. We say that $C_1$ \emph{interacts with } $C_2$ if $\Haus^{2}(\partial C_1\cap \partial C_2) \neq 0$. Given three pairwise disjoint open convex sets, $E_1$, $E_2$, $E_3$ we only have four possibilities, up to relabeling of the indices:

\begin{enumerate}[(\text{Case} 1)]
\item None of the sets interacts with the others;
\item $E_2$ interacts with $E_1$, and $E_3$ does not interact with $E_1$ and $E_2$;
\item $E_1$ interacts with both $E_2$ and $E_3$, but $E_2$ and $E_3$ do not interact with each other;
\item $E_i$ interacts with $E_j$, for all $1\le i \neq j\le 3$.
\end{enumerate}

 As in the proof of Theorem \ref{doublethm}, we apply Hahn-Banach Theorem to find planes $\pi_{ij}$ separating $E_i$ from $E_j$, given $1\le i \neq j \le 3$. In particular, $\partial E_i\cap \partial E_j $ is a convex set contained in $\pi_{ij}$.

\begin{Def}\label{LU}
We say that $E_1,E_2, E_3 \subset \R^3$ form a chain triple bubble if, up to a relabeling of the indices, $E_i$ is an open ball intersected with an open half space for all $i \in \{2,3\}$,  $\partial E_1 \cap \partial E_2$ and $\partial E_1\cap \partial E_3$ are disks contained in two distinct planes $\pi_{12}$ and $\pi_{13}$ respectively. If $\pi_{12}$ is not parallel to $\pi_{13}$, then $E_1$ is an open ball intersected with a connected component of the open set between $\pi_{12}$ and $\pi_{13}$; while if $\pi_{12}$ is parallel to $\pi_{13}$, then $\partial E_1$ is a surface of revolution intersected with the open set between $\pi_{12}$ and $\pi_{13}$. Furthermore, $\partial E_2$ intersects $\pi_{12}$ forming an angle of $120$ degrees,  $\partial E_1$ intersects $\pi_{12}$ and $\pi_{13}$ forming an angle of $120$ degrees and $\partial E_3$ intersects $\pi_{13}$ forming an angle of $120$ degrees.
\end{Def}

\begin{Def}\label{STTB}
We say that $E_1,E_2, E_3 \subset \R^3$ form a standard triple bubble if $E_1$, $E_2$ and $E_3$ are open balls with equal radii intersected with wedges generated respectively by the couples of planes $(\pi_{13},\pi_{12})$, $(\pi_{23},\pi_{12})$ and $(\pi_{13},\pi_{23})$. Furthermore, the planes intersect along a common line $\ell$ at an angle of $120$ degrees, $\partial E_i \cap \ell = \partial E_1 \cap \ell \neq \emptyset, \forall i =1,2,3$, and $E_i$ is equal to $E_{i+1}$ up to a rotation of $120$ degrees with axis $\ell$.
\end{Def}


\begin{Teo}\label{triplethm}
Assume $E_1,E_2,E_3 \subset \R^3$ are convex sets such that $V_{\E}$ for $\E = \{E_1,E_2,E_3\}$ satisfies \eqref{doubprob}. Then only one of the following configurations holds (up to relabeling indices):
\begin{enumerate}[(\text{Configuration} 1)]
\item $E_1,E_2,E_3$ are disjoint, possibly tangent, balls;
\item $E_1,E_2$ form a standard double bubble, while $E_3$ is a ball, possibly tangent to $E_1$ or $E_2$;
\item $E_1,E_2,E_3$ form a chain triple bubble;
\item $E_1,E_2,E_3$ form a standard triple bubble.
\end{enumerate}
In particular, (Configuration $i$) happens in (Case $i$) and only in (Case $i$), for every $i\in \{1,2,3,4\}$.
\end{Teo}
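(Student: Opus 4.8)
The plan is to run a case analysis following the four interaction patterns (Case~1)--(Case~4), building on the double bubble classification of Theorem~\ref{doublethm} and on the structural and capillarity results of Section~\ref{s:pre}. The common starting point is to invoke Hahn--Banach to produce the separating planes $\pi_{ij}$, and to record from Corollary~\ref{corstat} that interacting bubbles share their mean curvature: whenever $\Haus^2(\partial E_i\cap\partial E_j)\neq 0$ one has $\lambda_i=\lambda_j$, so that in (Case~3) one gets $\lambda_1=\lambda_2=\lambda_3$ and in (Case~4) all curvatures coincide. The second common ingredient is a \emph{reduction of inert bubbles to balls}: if $E_i$ does not interact with the other two, then $\llbracket\partial E_i\rrbracket$ has constant mean curvature $\lambda_i$ by Corollary~\ref{corstat} and is smooth by Proposition~\ref{reg}; testing \eqref{doubprob} with a field equal to the identity near $\overline{E_i}$ gives $\lambda_i n|E_i|=(n-1)\Haus^{n-1}(\partial E_i)$, and the rigidity of the Heintze--Karcher inequality \cite[Theorem 1.2]{SAN1} (applied as in Case~1 of Theorem~\ref{doublethm}) forces $E_i$ to be a ball.

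With these two facts, (Case~1) and (Case~2) are immediate. In (Case~1) all three bubbles are inert, hence balls, which are disjoint or tangent; this is (Configuration~1). In (Case~2) the bubble $E_3$ is inert and hence a ball, while testing \eqref{doubprob} with fields supported away from $\overline{E_3}$ shows that $V_{\{E_1,E_2\}}$ satisfies \eqref{doubprob} with multipliers $\lambda_1=\lambda_2$; since $\Haus^2(\partial E_1\cap\partial E_2)\neq 0$, the second case of Theorem~\ref{doublethm} yields the standard double bubble, giving (Configuration~2).

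The remaining two cases require the wedge analysis. In (Case~3), $E_1$ interacts with both $E_2$ and $E_3$, which are themselves disjoint. Arguing exactly as in \eqref{claim1}--\eqref{claim3} of Theorem~\ref{doublethm} (via Corollary~\ref{statvarif}, the $Y$-cone blow-up of Corollary~\ref{corstruct}, and the free boundary regularity of Subsection~\ref{subsubreg}), I would first show that the contacts are full faces, $\partial E_1\cap\pi_{1j}=\partial E_j\cap\pi_{1j}$ for $j=2,3$, that $\partial E_j$ meets $\pi_{1j}$ at $120$ degrees, and that the bubbles are smooth up to the free boundary. Then $E_2$ and $E_3$ are convex CMC caps in a half-space meeting the boundary plane at $120$ degrees, so the Alexandrov moving plane method \cite[Theorem 4.1.16]{LOP} shows each is a ball intersected with a half-space. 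For $E_1$, a convex CMC surface meeting $\pi_{12}$ and $\pi_{13}$ at $120$ degrees, I would split according to whether $\pi_{12}\parallel\pi_{13}$: if they are parallel, $E_1$ lies in a slab and moving planes in directions parallel to the slab force $\partial E_1$ to be a surface of revolution; if they meet along a line $L$ that $\partial E_1$ does not reach, the spherical reflection method of \cite{MCC,PRT} makes $E_1$ a ball intersected with the wedge. This is precisely (Configuration~3).

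The genuine difficulty is (Case~4), the standard triple bubble, where all three pairs interact and $\lambda_1=\lambda_2=\lambda_3$. After establishing the full-face contacts and the $120$-degree conditions as above, the \emph{combinatorial core} is to show, using the blow-up of Corollary~\ref{corstruct31} together with the $Y$-cone rigidity from stationarity, that the three films meet along a common triple line $\ell$ at mutual angles of $120$ degrees, so that each $E_i$ sits in a convex wedge of opening $\tfrac{2\pi}{3}$ and meets its two bounding planes at $120$ degrees. The crux is then the contact of $\partial E_i$ with $\ell$: a single-point contact is excluded by Theorem~\ref{pointrigid}, since $\alpha=\tfrac{2\pi}{3}\neq\tfrac{\pi}{3}$, while contact along a segment is handled by adapting \cite{CFR} through the Hopf differential to conclude that $\partial E_i\cap\Sigma_i$ is a spherical cap, whence each $E_i$ is a ball intersected with its wedge; the common curvature and the symmetry of Proposition~\ref{pointsym} then pin down the equal radii and the $120$-degree rotational symmetry about $\ell$, producing (Configuration~4). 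The hardest point throughout is exactly this last one, namely extracting enough rigidity for a convex capillary surface touching the edge of a wedge, which is the delicate conformal analysis of Theorem~\ref{pointrigid}. Finally, the biconditional in the statement follows because each construction above occurs within, and only within, its own interaction pattern, matching (Configuration~$i$) with (Case~$i$) for every $i\in\{1,2,3,4\}$.
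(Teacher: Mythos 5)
Your overall architecture matches the paper's (case analysis on the interaction pattern, Heintze--Karcher rigidity for non-interacting bubbles, moving planes for half-space and slab capillary surfaces, wedge rigidity near the triple line), but there are genuine gaps at exactly the points where the paper spends most of its effort. The first is that you systematically ignore \emph{measure-zero tangential contacts}, which the statement explicitly allows (``possibly tangent''). In (Case 2), Corollary \ref{corstat} only gives constant mean curvature of $\llbracket \partial E_3\setminus(\partial E_1\cup\partial E_2)\rrbracket$, not of $\llbracket\partial E_3\rrbracket$: if $E_3$ touches $E_1$ at a point, you must first prove that $\partial E_3$ is $C^1$ across the contact set and that the CMC equation extends across it (the paper does this via the blow-up analysis of Propositions \ref{uniquecone} and \ref{rigconv} and then Lemma \ref{div}) before any Alexandrov-type rigidity applies. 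Likewise, your claim that testing \eqref{doubprob} with fields supported away from $\overline{E_3}$ makes $V_{\{E_1,E_2\}}$ stationary for the double bubble problem is insufficient: Theorem \ref{doublethm} needs \eqref{doubprob} for \emph{all} compactly supported fields, and near a touching point $x_1\in\partial E_3\cap\partial E_1$ you cannot localize away from $E_3$. The paper must first show $x_1\notin\pi_{12}$ (by a blow-up combined with the maximum principle of \cite{WICK}) and then that $\partial E_1$ is CMC across $x_1$, and only then conclude via Theorem \ref{doublethm}.

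The second gap is in (Case 3): you treat only the subcases ``$\pi_{12}$ parallel to $\pi_{13}$'' and ``the planes meet along $L$ with $\partial E_1\cap L=\emptyset$'', but the classification also requires the subcases $\partial E_1\cap L=\{x\}$ and $\partial E_1\cap L=[x_1,x_2]$. These are not removable by soft arguments: the point-contact case actually \emph{occurs} --- Theorem \ref{pointrigid} forces the wedge angle to equal $\pi/3$ and one still obtains a lined-up bubble, via a gluing and unique-continuation argument --- while the segment-contact case must be excluded by a lengthy blow-up analysis; as written, your argument would miss configurations that exist, so it is not a classification. The third gap is in (Case 4): you assert that the three separating planes meet along a common line at mutual $120$-degree angles ``using the blow-up of Corollary \ref{corstruct31} together with $Y$-cone rigidity'', but this is a global statement that local blow-ups cannot detect. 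In the prism subcase ($\pi_{12}\cap\pi_{13}\cap\pi_{23}=\emptyset$) every blow-up is a perfectly admissible $Y$-cone; the paper excludes it, and the subcase where the three planes meet at a single point, only through global capillary rigidity --- the angle bound for ring-type spanners of \cite{MCC}, the spherical rigidity of \cite{PRT}, and Proposition \ref{pointsym} --- combined with the angle-sum identity $\sum_i\alpha_i=2\pi$. The same global input (together with Theorem \ref{pointrigid} and \cite{MCC}) is needed to rule out empty or single-point contact with the triple line before your segment-contact analysis can begin; note also that your ordering there is circular, since the $2\pi/3$ opening angles you feed into Theorem \ref{pointrigid} are derived in the paper (see \eqref{pij120}) from blow-ups at interior points of the contact segment, which do not exist in the point-contact case.
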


\subsection{Proof of (Case 1) if and only if (Configuration 1).} In this case, the proof is the same as in Case 1 of Theorem \ref{doublethm}, i.e. plugging into \eqref{doubprob} $g = \id$ and then applying \cite[Theorem 1.2]{SAN1} separately on $E_1$, $E_2$ and $E_3$. This gives us precisely (Configuration 1).

\subsection{Proof of (Case 2) if and only if (Configuration 2).} We first show that $E_3$ must be a ball. 

\subsubsection{$E_3$ is a ball.}\label{aball} By Corollary \ref{corstat}, we know that $\llbracket \partial E_3 \setminus (\partial E_1 \cup \partial E_2)\rrbracket$ is a varifold with constant mean curvature. Our aim is to show that $\llbracket\partial E_3\rrbracket$ is a varifold with constant mean curvature. By \cite[Theorem 1]{DEM}, this would imply that $E_3$ is a ball. By assumption, $\Haus^{2}(\partial E_3\cap \partial E_1) = \Haus^{2}(\partial E_3\cap \partial E_2) = 0$, and hence we aim to show that $E_3$ is also of constant mean curvature \emph{across} $\partial E_3\cap \partial E_1$ and $\partial E_3 \cap \partial E_2$. We can suppose that either $\partial E_3\cap \partial E_1 \neq \emptyset$ or $\partial E_3\cap \partial E_2 \neq \emptyset$, as otherwise Corollary \ref{corstat} concludes the proof. Without loss of generality, let
$
\partial E_3\cap \partial E_1 \neq \emptyset.
$
\\
\\
\fbox{\emph{Step 1: $\partial E_3$ is $C^1$.}}
By Corollary \ref{regularconv}, we just need to show that the blow-up at every point of $\partial E_3$ is a plane. We have $\partial E_3 \cap \partial E_1 \subset \pi_{13}$. Since $\Haus^{2}(\partial E_3\cap \partial E_1) =0$, by convexity we can find a line $L \subset \pi_{13}$ such that $\partial E_3 \cap \partial E_1\subset L$. Consider a point $x \in \partial_{L}(\partial E_3 \cap \partial E_1)$. We translate and rotate to have that $\pi_{13} = \{y \in \R^3: y_3 = 0\}$ and $x=0$. By Proposition \ref{graph}, we can write $\partial E_3$ as the graph of a convex function $\varphi$ defined on $B_\delta^{\pi}(x)$, where $\pi$ is a supporting plane of $\partial E_3$ at $x$. We use Proposition \ref{uniquecone} to find the unique blow-up cone $\llbracket K\rrbracket$ of $\llbracket \partial E_3 \rrbracket $ at $x$, that is the graph of some positively one-homogeneous convex function $H$ defined on $\pi$. Since $\partial E_3$ has constant mean curvature $\lambda$ outside of $\partial E_3\cap \partial E_1$, by Proposition $\ref{varigraph}$
\[
\dv(D\A(D\varphi)) = \lambda,\quad \text{in the sense of distributions on $B_\delta^\pi(x)\setminus p_\pi(\partial E_3 \cap \partial E_1)$}.
\]
By the latter and the fact that $x \in \partial_{L}(\partial E_3 \cap \partial E_1)$, we can exploit the definition of $H$ in \eqref{homogeneous} to see that there exists a half line $\ell \subset \pi $ such that
\[
\dv(D\A(DH)) = 0,\quad \text{in the sense of distributions on $\pi \setminus \ell$}.
\]
From Proposition $\ref{rigconv}$, we find that $H$ is affine on $\pi\setminus \ell$. By continuity, we infer that $H$ is affine on $\pi$. This shows that $\llbracket K \rrbracket$ is a plane. Since $\pi_{13} = \{y \in \R^3: y_3 = 0\}$ separates $E_3$ from $E_1$, we have that without loss of generality $z_3 > 0$ for every $z = (z_1,z_2, z_3) \in E_3$. This imposes that $z_3 \ge 0$ for every $z \in K$ and, since $K$ is a plane, $K= \pi_{13}$. Up to now, we have shown that the blow-up of $\partial E_3$ is $\pi_{13}$ at every $x \in \partial_L(\partial E_1\cap\partial E_3)$, and in particular this settles the proof of the claim in case $\partial E_1 \cap \partial E_3$ consists of a single point. Hence we can assume there exists $x \in \intt_L(\partial E_1\cap \partial E_3)$. Write $x = s x_1 + (1-s) x_2$, with $x_1,x_2 \in \partial_L(\partial E_1\cap\partial E_3)$. By the first part of the proof, the blow-up at $x_1$ and $x_2$ of $\partial E_3$ is the plane $\pi_{13} = \{y \in \R^3: y_3 = 0\}$. By the uniform convergence of Proposition \ref{uniquecone}, we infer that for all $\alpha > 0$, there exists $\beta> 0$ such that
\begin{equation}\label{closedness}
\partial E_3\cap B_\beta(x_i) \subset \{z \in \R^3: 0\le z_3 \le \alpha\beta\}, \quad\forall i =1,2.
\end{equation}
Suppose by contradiction that the blow-up $\llbracket K_x\rrbracket$ at $x$ is not a plane. Since $x \in \intt_L(\partial E_1\cap \partial E_3)$, then $(L-x) \subset K_x$. Up to a rotation inside $\pi_{13}$, we denote $(L-x)=\{y \in \R^3:y_2=0,y_3=0\}$. Since by assumption $\llbracket K_x\rrbracket$ is not a plane and it is a cone and the graph of a convex function over a supporting plane of $E_3$, then without loss of generality there exists $\theta >0$ such that $y_3=\theta y_2$ for every $y=(y_1,y_2,y_3)\in K_x$ such that $y_1=0$ and $y_2>0$. 
%
%
%
%
 Again by the uniform convergence of Proposition \ref{uniquecone}, we infer that there exists $\rho > 0$ such that
\begin{equation}\label{too}
\partial E_3 \cap \{y \in \R^3:y_1=0, y_2>0\} \cap B_\rho(x) \subset \left\{y \in \R^3: 0 < \frac{\theta}{2}\rho \le y_3 \le 2\theta \rho\right\}.
\end{equation}
We now choose $\alpha = \frac{\theta}{4}$ in \eqref{closedness}, to find a corresponding $0 < \beta \le \rho$. Since $E_3$ is convex, there exists $y \in \co(\partial E_3\cap (B_\beta(x_1)\cup B_\beta(x_2)))\cap B_\rho(x)$ such that
\[
y_1=0, \quad y_2>0, \quad 0 \le y_3 \le \beta\frac{\theta}{4} \le \rho\frac{\theta}{4}.
\]
In particular, there exists $z \in \partial E_3 \cap \{y \in \R^3:y_1=0, y_2>0\} \cap B_\rho(x)$ satisfying $z_3\in \left[0,\frac{\theta}{4}\rho\right]$, that is in contradiction with \eqref{too}.
\\
\\
\fbox{\emph{Step 2: Conclusion of the proof.}} We claim that for every $x\in\partial  E_3$, there exists a neighborhood of $x$ such that in that neighborhood $\partial  E_3$ has constant mean curvature $\lambda$ in the sense of varifolds. If $x\in \partial E_3 \setminus (\partial E_1 \cup \partial E_2)$, the claim follows by Corollary \ref{corstat}. Let $x \in \partial E_3 \cap \partial E_1$, the case $x \in \partial E_3\cap \partial E_2$ being analogous. Then, by Corollary \ref{regularconv}, we can write $\partial E_3$ as the graph over its unique tangent plane $\pi=\pi_{13}$ of a convex $C^1$ function $\varphi$ defined on $B^\pi_\delta(x)$ for some small $\delta > 0$. By our assumption $\mathcal{H}^{2}(\partial E_3 \cap \partial E_1) = 0$, we infer the existence of a line $L \subset \R^3$ such that $\partial E_3 \cap \partial E_1 \subset L$. By Proposition \ref{varigraph} we deduce that $\varphi$ satisfies
\begin{equation}\label{-S}
\dv(D\A(D\varphi)) = \lambda,\quad \text{in the sense of distributions on $B^\pi_\delta(x)\setminus L$}.
\end{equation}
Since $\varphi \in C^1$, by Lemma \ref{div} we deduce that the same holds in $B_\delta^\pi(x)$. We infer that the mean curvature of $\partial E_3$ in a neighborhood of $x$ is $\lambda$ again by Proposition \ref{varigraph}. Therefore, for every $x\in\partial  E_3$ there exists a neighborhood of $x$ such that in that neighborhood $\llbracket \partial  E_3\rrbracket$ has constant mean curvature $\lambda$ in the sense of varifolds. It readily follows that $\llbracket \partial  E_3\rrbracket$ has constant mean curvature $\lambda$ in the sense of varifolds. Applying \cite[Theorem 1.2]{SAN1} or \cite[Theorem 1]{DEM} we deduce that $E_3$ is a ball.

\subsubsection{Conclusion}

We just proved that $E_3$ is an open ball. We wish to prove that $\E' = \{E_1,E_2\}$ is such that 
\begin{equation}\label{conc32}
V_{\E'} \text{ is stationary for the double bubble problem,}
\end{equation}
i.e. \eqref{doubprob} holds for $k = 2$. An application of Theorem \ref{doublethm} will then conclude the proof, as it will tell us that we are exactly in the situation described by (Configuration 2). If 
$
\partial E_3 \cap \partial E_i = \emptyset \text{ for every } i =1,2,
$
we immediately conclude \eqref{conc32}. Hence we can suppose, without loss of generality, that $\partial E_3 \cap \partial E_1 \neq \emptyset$. Since an Euclidean sphere does not contain any segment, by convexity of $E_1,E_2,E_3$ it follows that $\partial E_3 \cap \partial E_1 = \{x_1\}$ and $\partial E_3\cap \partial E_2= \{x_2\}$, if it is nonempty. Notice that, as in \eqref{claim1}, we find that $\partial E_1\cap \pi_{12} = \partial E_2 \cap \pi_{12}$, and this is a $2$-dimensional convex set inside $\pi_{12}$ by assumption. If 
$
\partial E_3 \cap \partial E_1 = \{x_1\} \text{ and } x_1 \notin \pi_{12},
$
then we also have that $x_1 \not \in \partial E_3\cap \partial E_2$. In this situation, we can use a procedure similar to the one of Section \ref{aball} to show that $\partial E_1$ is $C^1$ across $x_1$ and that $\partial E_1$ has constant mean curvature around $x_1$. The same would hold for $E_2$ provided $\partial E_3\cap \partial E_2 = \{x_2\}$. This yields \eqref{conc32}. 
We conclude the proof showing by a blow-up analysis that 
\begin{equation}\label{utiles}
x_1\not\in \pi_{12} \mbox{ and }x_2\not\in \pi_{12}.
\end{equation}
Assume by contradiction, without loss of generality, that $x_1\in \pi_{12}$. Since $\partial E_1\cap \pi_{12} = \partial E_2 \cap \pi_{12}$, then $x_1 \in \partial E_2$. Hence the tangent plane $\pi$ to $\partial E_3$ at $x_1$ coincides with the separating hyperplanes $\pi_{13}$ and $\pi_{23}$. After a rigid motion, we suppose that $\pi = \{y \in \R^3: y_3 = 0\}$, that $E_3 \subset \{y \in \R^3: y_3 > 0\}$, and that $x_1=0$. Notice that $E_1$ and $E_2$ are in the same configuration as in Corollary \ref{corstruct}, with the additional sphere $\partial E_3$ attached to the point $x_1$ of blow-up. Hence, it is not difficult to see that the blow-up of $V_\E$ at $x_1$ is given by four pieces:
\begin{equation}\label{conocontr}
V_{x_1} = \llbracket K\rrbracket + \llbracket K_1\rrbracket + \llbracket K_2\rrbracket + \llbracket \pi\rrbracket,
\end{equation}
which are respectively the blow-ups at $x_1$ of $\llbracket \partial E_1 \cap \pi_{12}\rrbracket=\llbracket \partial E_2 \cap \pi_{12}\rrbracket$, $\llbracket \partial E_1 \setminus \pi_{12}\rrbracket$, $\llbracket \partial E_2 \setminus \pi_{12}\rrbracket$ and $\llbracket \partial E_3\rrbracket$. By Corollary \ref{blowcone}, $V_{x_1}$ must be stationary. However, this is not possible. Indeed, we would have that $\spt(\|V_{x_1}\|) \subset \{y \in \R^3: y_3 \ge 0\}$, and by the Maximum Principle \cite[Theorem 1.1]{WICK}, we would obtain that $\spt(\|V_{x_1}\|) = \pi$, which contradicts Corollary \ref{corstruct}. Hence we conclude the validity of \eqref{utiles}.

\subsection{Proof of (Case 3) if and only if (Configuration 3).} 
The proof is divided in subcases. 

\subsubsection{ (Subcase 1): $\pi_{12}$ and $\pi_{13}$ are parallel planes.} \label{coincid:pla}
If $\pi_{12} = \pi_{13}$, there exists $x\in \intt_{\pi_{12}}(\partial E_1\cap \pi_{12}) \cap \partial E_2\cap\partial E_3$, otherwise by convexity $\partial E_1 \setminus (\partial E_2\cup \partial E_3)$ would contain a disk, which provides the same contradiction as in the proof of \eqref{claim1}. On the other hand, $x\in \intt_{\pi_{12}}(\partial E_1\cap \pi_{12}) \cap \partial E_2\cap\partial E_3$ gives the same contradiction as for \eqref{conocontr}, considering the blow-up of $V_\E$ at $x$. 
Therefore $\pi_{12} \neq \pi_{13}$ cut $\R^3$ into three open disjoint sectors $\Sigma_j$, for $j = 1,2,3$, with 
\[
\partial \Sigma_2 = \pi_{12},\quad \partial \Sigma_1 = \pi_{12}\cup\pi_{13},\quad \partial \Sigma_3 = \pi_{13}.
\]
Moreover, we have, for $i = 2,3$:
\begin{enumerate}
\item\label{cl13} $\partial E_i\cap \partial E_1 = \partial E_i \cap \pi_{1i} = \partial E_1 \cap \pi_{1i}$;
\item\label{cl23} $E_i$ is a ball intersected with $\Sigma_i$ with $\partial E_i$ intersecting $\pi_{1i}$ with constant angle of $120$ degrees;
\item\label{cl33} $\partial E_1$ intersects $\pi_{1i}$ with constant angle of $120$ degrees;
\item\label{cl34} $\partial E_1 \cap \pi_{1i}$ is a $C^\infty$ domain in $\pi_{1i}$ and $\overline{\partial E_1 \setminus \pi_{1i}}$ is a smooth manifold with boundary.
\end{enumerate}
\eqref{cl13} can be proved in the same way as \eqref{claim1} of Theorem \ref{doublethm}. To show \eqref{cl23} and \eqref{cl33}, one first uses Corollary \ref{corstruct} to study the blow-ups at points of $\partial_{\pi_{1i}}(\partial E_i\cap \partial E_1)$, and then deduces that the angles must be of 120 degrees, exactly as in \eqref{claim2} of Theorem \ref{doublethm}. One can prove \eqref{cl34} of the above list as in \eqref{claim3} of Theorem \ref{doublethm}. Finally, using \cite[Theorem 4.1.16]{LOP}, we find that $E_i$ is a ball intersected with $\Sigma_i$, for $i = 2,3$. Now an application of Alexandrov's method of moving planes shows that $E_1$ is axially symmetric, see for instance \cite{MMRM}. This yields that the two disks $\partial E_1 \cap \pi_{12}$ and $\partial E_1 \cap \pi_{13}$ are coaxial.  Delaunay \cite{De} proved that the only surfaces of revolution with constant mean curvature are the plane, the cylinder, the sphere, the catenoid, the unduloid and the nodoid. The requirement that $E_1$ is a convex set and $\partial E_1$ intersects the two disks at a constant angle of $120$ degrees implies that $\partial E_1\cap \Sigma_1$ is either a sphere or an unduloid or a nodoid intersected with $\Sigma_1$. This shows that $E_1,E_2,E_3$ are a chain triple bubble in the sense of Definition \ref{LU}, as in (Configuration 3).

\subsubsection{ (Subcase 2): $\pi_{12}\cap \pi_{13}=L$ is a line.}\label{sub2}

Notice that $\pi_{12}$ and $\pi_{13}$ divide $\R^3$ in four open convex wedges, that will be denoted by $\Sigma_j$, for $j =1,2,3,4$. Upon relabeling, we can assume that
\[
E_1 \subset \Sigma_1,\quad E_2 \subset \intt(\overline{\Sigma_2\cup\Sigma_4}), \quad E_3 \subset \intt(\overline{\Sigma_3\cup\Sigma_4}).
\]
We start by the following usual observation, see \eqref{claim1},
\begin{equation}\label{intersec}
\partial E_2 \cap \pi_{12}  = \partial E_1 \cap \pi_{12} \quad  \text{ and } \quad \partial E_3 \cap \pi_{13}  = \partial E_1 \cap \pi_{13},
\end{equation}
from which it follows
\begin{equation}\label{intersecL}
\partial E_i \cap L  = \partial E_1 \cap L, \quad \forall i =2,3.
\end{equation}
Now, if
\begin{equation}\label{case331}
\partial E_1 \cap L =\partial E_2 \cap L = \partial E_3 \cap L = \emptyset,
\end{equation}
with the same arguments of (Case 2) one can see that \eqref{cl13}-\eqref{cl23}-\eqref{cl33}-\eqref{cl34} of (Subcase 1) hold in (Subcase 2) as well. Hence, in order to prove that $E_1,E_2,E_3$ form a chain triple bubble, we are just left to show that $E_1$ is a ball intersected $\Sigma_1$. This is anyway an immediate consequence of \cite[Theorem 1]{PRT}. Indeed, by Corollary \ref{corstat}, $\partial E_1 \cap \Sigma_1$ is a surface with constant mean curvature that, by \eqref{cl33} of the list of (Subcase 1), intersects $\pi_{12}$ and $\pi_{13}$ with constant angle. Moreover, Proposition \ref{RT} implies that $\overline{\partial E_1 \setminus (\pi_{12} \cup \pi_{13})}$ is of ring-type in the sense of Definition \ref{def:DTRT}. Thus $\partial E_1$ is \emph{a ring-type spanner in a wedge}, in the terminology of \cite{PRT}. Therefore $E_1$ is a ball intersected with $\Sigma_1$. Thus, if \eqref{case331} holds, we see that $E_1,E_2,E_3$ are in (Configuration 3). 

To conclude the proof of (Subcase 2), we shall consider the case in which \eqref{case331} does not hold, i.e. by \eqref{intersecL}:
\begin{equation}\label{case332}
\partial E_1 \cap L = \partial E_2\cap L = \partial E_3\cap L \neq \emptyset.
\end{equation}
First, we observe that \eqref{cl33}-\eqref{cl34} of (Subcase 1) still hold in this case, at least in $\pi_{1i}\setminus L$ for $i =2,3$. Since $E_i$ is convex for each $i$ and $L$ is a line, there are only two possibilities: either there exists $x\in L$ such that
\begin{equation}\label{case3321}
\partial E_1 \cap L = \partial E_2\cap L = \partial E_3\cap L = \{x\}.
\end{equation}
or there exists $x_1,x_2\in L$ such that
\begin{equation}\label{case3322}
\partial E_1 \cap L = \partial E_2\cap L = \partial E_3\cap L = [x_1,x_2].
\end{equation}
We wish to show that if \eqref{case3321} holds, then $E_1,E_2,E_3$ are in (Configuration 3), while case \eqref{case3322} cannot hold. We start by showing the latter.
\\
\\
\fbox{\eqref{case3322} does not hold.} Assume by contradiction that \eqref{case3322} holds. We start by observing that
\begin{equation}\label{int:line}
\partial E_2\cap \partial E_3 = [x_1,x_2].
\end{equation}
If this were not the case, then by \eqref{case3322} there would exist $y \in \partial E_2\cap \partial E_3\setminus L$. By convexity of $\overline{E_i}$ for $i =2,3$ and since $E_2\cap E_3=\emptyset$, this would imply that the whole triangle with vertices $y, x_1$ and $x_2$ is contained in $\partial E_2\cap \partial E_3$, which contradicts the fact that $E_2$ and $E_3$ do not interact. We now give a list of claims and show how to conclude the proof, before proving our claims. We first claim that the blow-up $V_x$ of $V_\E$ at $x \in (x_1,x_2)$, assuming $x = 0$ without loss of generality, is given by $
\llbracket\pi_{12}\rrbracket + \llbracket \pi_{13}\rrbracket.
$
More precisely, we will show that
\begin{equation}\label{claim1bis:333}
\mbox{the blow-ups of $\llbracket \partial E_2\rrbracket$ and $\llbracket \partial E_3\rrbracket$ at $x$ are $\llbracket\pi_{12} \cap \overline\Sigma_2 \rrbracket + \llbracket\pi_{13} \cap \overline\Sigma_4 \rrbracket \text{ and } \llbracket\pi_{12} \cap \overline\Sigma_4 \rrbracket + \llbracket\pi_{13} \cap \overline\Sigma_3 \rrbracket$}.
\end{equation}
Next, we are going to show that the same happens at $x_i$ for $i=1,2$, i.e. that also the blow-up at $x_i$ of $V_\E$ is given by $\llbracket\pi_{12}\rrbracket + \llbracket \pi_{13}\rrbracket$ and that 
\begin{equation}\label{claim1bis:333i}
\mbox{the blow-ups of $\llbracket \partial E_2\rrbracket$ and $\llbracket \partial E_3\rrbracket$ at $x_i$ are $\llbracket\pi_{12} \cap \overline\Sigma_2 \rrbracket + \llbracket\pi_{13} \cap \overline\Sigma_4 \rrbracket \text{ and } \llbracket\pi_{12} \cap \overline\Sigma_4 \rrbracket + \llbracket\pi_{13} \cap \overline\Sigma_3 \rrbracket$}.
\end{equation}
Furthermore, in the same proof we will deduce that $\Sigma_1$ and $ \Sigma_4$ are wedges with opening angles of $60$ degrees, and consequently $ \Sigma_2$ and $ \Sigma_3$ are wedges of opening angle of 120 degrees. From these two claims, it follows that the blow-up of $\llbracket \partial E_2 \cap \pi_{12}\rrbracket$ is a half-plane at every point of its boundary, and by Corollary \ref{regularconv} we infer that $\Gamma := \partial_{\pi_{12}}(\partial E_2 \cap \pi_{12})$ is a $C^1$ curve. Furthermore, our claims imply that $\overline{\partial E_2\setminus \pi_{12}}$ intersects $\Gamma$ at every point at a constant angle of $120$ degrees. As in Subsection \ref{subsubreg}, it follows that $\overline{\partial E_2 \setminus \pi_{12}}$ is a smooth manifold with boundary and that $\Gamma$ itself is a smooth curve. As in Case 2 of Theorem \ref{doublethm}, we can employ Alexandrov moving plane method to deduce that $E_2$ is a ball intersected $\overline{\Sigma_4\cup\Sigma_2}$. This contradicts the assumption in \eqref{case3322} that $\Gamma$ contains a segment. We now turn to the proof of our claims.
\\
\\
We first prove \eqref{claim1bis:333}. We consider the blow-up $V_x$ of $V_\E$ at $x \in (x_1,x_2)$. Without loss of generality, assume $x = 0$. Employing Corollary \ref{corstruct31}, we write
\begin{equation}\label{lined:V}
V_x = \llbracket K\rrbracket + \llbracket K_2\rrbracket + \llbracket K_3\rrbracket,
\end{equation}
Exploiting the convexity of $E_1$, it is easy to check that the blow-up of $\llbracket \partial E_1\rrbracket$ at $x$ is given by the sum of the half-planes $\llbracket\pi_{12}\cap \overline{\Sigma_1}\rrbracket + \llbracket\pi_{13}\cap \overline{\Sigma_1}\rrbracket$. By Corollary \ref{corstruct31}, we then have
\begin{equation}\label{lined:pi}
\llbracket K\rrbracket = \llbracket\pi_{12}\cap \overline{\Sigma_1}\rrbracket + \llbracket\pi_{13}\cap \overline{\Sigma_1}\rrbracket.
\end{equation}
This implies that $K\cap L = L$, and Corollary \ref{corstruct31} further tells us that $K_i\cap K = K\cap L = L$, for $i = 2,3$. To conclude the proof \eqref{claim1bis:333}, we just need to prove that
\begin{equation}\label{claim3.0}
\text{$K_2 = \pi_{13}\cap \overline{\Sigma_4}$ and $K_3 = \pi_{12}\cap \overline{\Sigma_4}$.}
\end{equation}
To this aim, we show that 
\begin{equation}\label{claim3.1}
\text{$K_2$ and $K_3$ are distinct half-planes containing $L$}.
\end{equation}
Indeed, since $K_i \cap K = L$, for $i =2,3$, both $K_2$ and $K_3$ contain $L$. Moreover, notice that, since $K_2$ and $K_3$ are cones with vertex at $x$, the set
$
I := K_2\cap K_3
$
is itself a cone with vertex at $x$. If $I = L$, then by stationarity and graphicality, compare Corollary \ref{blowcone} and Corollary \ref{corstruct31} respectively, we can employ Corollary \ref{statvarif} to infer that both $K_2$ and $K_3$ are distinct half-planes containing $L$. It cannot happen that $I \neq L$. Otherwise, $I$ contains a half-line $L'$ starting from $x$, with $L'\cap L = \{x\}$ and $L',L\subset \pi_{23}$. Using the notation $A_i\subset \pi_i$ of Corollary \ref{corstruct31}, we deduce that $A_i$ is a half-plane in bounded by $L$. Moreover, since $L',L\subset \pi_{23}$, then $\pi_i$ is not orthogonal to $\pi_{23}$. In particular $\pi_{23}$ is the graph of an affine function $h$ over $\pi_i$. By Corollary \ref{corstruct31} and the fact that $\pi_{23}$ is a separating plane among $E_2$ and $E_3$, we deduce without loss of generality that $K_2$ is the graph of the restriction of a positively one-homogeneous convex function $\varphi$ to the subset $A_i\subset \pi_i$, such that $\varphi\geq h$ on $A_i$ and $\varphi= h$ on $p_{\pi_i}(L'\cup L)$. We conclude that $\varphi$ is affine on $A_i$ and hence that $K_2$ is the half-plane bounded by $L$ in $\pi_{23}$. Analogously, we deduce the same for $K_3$ and in particular that $K_2=K_3$. Since $\partial V_x=0$, we deduce that $\pi_{12}=\pi_{23}$, which contradicts the fact that $\Sigma_1$ is nonempty. This conclude the proof of \eqref{claim3.1}.

We now show \eqref{claim3.0}. Corollary \ref{blowcone} tells us that $V_x$ must be stationary. A direct computation similar to \eqref{zeroeq}, tells us that in order for $V_x$ to be stationary, either $K_2 = \pi_{12}\cap \overline{\Sigma_4}$ and $K_3 = \pi_{13}\cap \overline{\Sigma_4}$, or $K_2 = \pi_{13}\cap \overline{\Sigma_4}$ and $K_3 = \pi_{12}\cap \overline{\Sigma_4}$.  We first show that the first case cannot occur. Indeed, since we are considering the blow-up at $x \in L \subset \pi_{23}$, we notice that the blow-up of $\partial E_2$ and $\partial E_3$ must lie in (the closure of) different connected components of $\R^3\setminus \pi_{23}$. If we had $K_2 = \pi_{12}\cap \overline{\Sigma_4}$ and $K_3 = \pi_{13}\cap \overline{\Sigma_4}$, then the blow-up of $\partial E_i$ at $x$ would be $\pi_{1i}$, for all $i =2,3$. This in turn would imply $\pi_{12} = \pi_{13} = \pi_{23}$, which results in a contradiction. This proves \eqref{claim3.0} and hence \eqref{claim1bis:333}.
By Proposition \ref{uniquecone} and \eqref{claim1bis:333}, we also deduce that 
\begin{equation}\label{E23}
E_i \subset \Sigma_i, \qquad \forall i =2,3.
\end{equation}

To conclude the proof that \eqref{case3322} does not hold, we show our last claim \eqref{claim1bis:333i} for the blow-up $W_i = W_{x_i}$ at $x_i$ of $V_\E$. We also need to show that $\Sigma_1$ and $ \Sigma_4$ are wedges with opening angles of $60$ degrees, and consequently $ \Sigma_2$ and $ \Sigma_3$ are wedges of opening angle of 120 degrees. First, Corollary \ref{corstruct31} gives us, as above, the equality
\[
W_i = \llbracket K\rrbracket + \llbracket K_2\rrbracket + \llbracket K_3\rrbracket.
\]
First assume that
\begin{equation}\label{Kwish}
\llbracket K\rrbracket = \llbracket\pi_{12}\cap \overline{\Sigma_1}\rrbracket + \llbracket\pi_{13}\cap \overline{\Sigma_1}\rrbracket,
\end{equation}
Then, as in the proof of \eqref{claim3.0} it follows that
$
\llbracket K_2 \rrbracket =  \llbracket\pi_{13}\cap \overline{\Sigma_4}\rrbracket$ and $\llbracket K_3 \rrbracket =  \llbracket\pi_{12}\cap \overline{\Sigma_4}\rrbracket.
$
Thus, $\partial E_i$ intersects  $\pi_{1i}$ at a constant angle along the points of $L$. 
By Corollary \ref{regularconv}, we deduce that $\Gamma = \partial_{\pi_{12}}(\partial E_2 \cap \pi_{12})$ is a $C^1$ curve and, since $x_i$ is the limit of points of $\partial_{\pi_{12}}(\partial E_2 \cap \pi_{12})$ on which the contact angle is $120$ degrees, we deduce that the contact angle between $\partial E_2 \setminus \pi_{12}$ and $\pi_{12}$ is $120$ degrees also at $x_i$. In particular, due to \eqref{E23}, $\Sigma_1$ and $ \Sigma_4$ are wedges with opening angles of $60$ degrees, and consequently $ \Sigma_2$ and $ \Sigma_3$ are wedges of opening angle of 120 degrees, which is the desired claim \eqref{claim1bis:333i}. 
Second, we assume that \eqref{Kwish} does not hold, and show how this leads to a contradiction. As $[x_1,x_2] \subset \overline E_1$ and we are blowing-up this set at an endpoint $x_i$, which we will assume to be $0$, by Proposition \ref{uniquecone}, the constant mean curvature of $\llbracket\partial E_1 \cap \Sigma_1\rrbracket$ and Proposition \ref{rigconv}, we deduce that:
\[
\llbracket K\rrbracket = \llbracket S_1 \rrbracket + \llbracket S_2\rrbracket + \llbracket S_3\rrbracket,
\]
where $S_\ell \subset \pi_{1\ell}$ for $\ell =2,3$ is the blow-up of the planar set $\partial E_1 \cap \pi_{1\ell}$. $S_\ell$ is either $\pi_{1\ell} \cap \overline \Sigma_1$ or it is the convex hull of a half line $H \subset L$ (given by the blow-up of $[x_1,x_2]$) starting at $0$ and another half-line $L_i'$ starting at $0$ and contained in $\pi_{1\ell}\cap \overline{\Sigma_1}$. Since \eqref{Kwish} does not hold, we can assume without loss of generality that $S_2$ belongs to the second category. $S_1$ in given by the intersection between $\overline \Sigma_1$ and the unique plane passing through $0$ and containing $L_2'$ and $L_3'$ if also $S_3$ belongs to the second category, or $L_2'$ and  $L$ if $S_3$ belongs to the first one. By Corollary \ref{corstruct31}, we know that $\llbracket K_i\rrbracket$ is the blow-up of $\llbracket \partial E_i \setminus \pi_{1i}\rrbracket$, for $i=1,2$. By \eqref{E23},  $ K_2\subset \overline{\Sigma}_2$ and $ K_3\subset \overline{\Sigma}_3$, hence by Corollary \ref{corstruct31} we deduce that $ K_2 \cap K_3=H$. By Proposition \ref{rigconv}, it follows that $K_2$ is a non-empty piece of plane contained in $\overline{\Sigma}_2$. Since $H,L_2'\subset K_2$, then $K_2$ is a non-empty piece of plane contained in $\pi_{12}$, which contradicts $E_2$ being a non-empty convex open bounded set and concludes the proof of claim \eqref{claim1bis:333i}. 
\\
\\
\fbox{If \eqref{case3321} holds, then $V_\E$ is in (Configuration 3).} 
Assume without loss of generality that $x = 0$. Apply Theorem \ref{pointrigid} to find that the opening angle $\alpha$ of the wedge $\Sigma_1$ is
\begin{equation}\label{pi3}
\alpha = \frac{\pi}{3}
\end{equation}
and that the blow-up at $x$ of $\llbracket \partial E_1\rrbracket$ is given by
\begin{equation}\label{blowe1}
\llbracket \pi_{12}\cap \overline \Sigma_1\rrbracket + \llbracket\pi_{13} \cap \overline \Sigma_1\rrbracket.
\end{equation}
Consider the blow-up $V_x = \llbracket K\rrbracket + \llbracket K_2\rrbracket + \llbracket K_3\rrbracket$ of $V_{\E}$ at $x = 0$ as
in Corollary \ref{corstruct31}. We wish to show that
\begin{equation}\label{Vx}
V_x = \llbracket \pi_{12} \rrbracket + \llbracket \pi_{13} \rrbracket.
\end{equation}
By \eqref{blowe1} and by the definition of $K$ in Corollary \ref{corstruct31}, we have
\[
\llbracket K\rrbracket = \llbracket\pi_{12}\cap \overline{\Sigma_1}\rrbracket + \llbracket\pi_{13}\cap \overline{\Sigma_1}\rrbracket.
\]
Moreover, $K_i \cap K = \partial_{\pi_{1i}}(K\cap \pi_{1i}) = \pi_{12}\cap \pi_{13} = L$. We can reason as in the proof of \eqref{claim3.0} to infer that \eqref{Vx} is the only possible stationary blow-up. In particular by \eqref{case3321} we deduce that
\begin{equation}\label{int:point}
\partial E_2\cap\partial E_3 = \{x\}.
\end{equation}
As already argued before, by \eqref{blowe1} and by Corollary \ref{regularconv}, we deduce that the curves $\Gamma_i := \partial_{\pi_{1i}}(\pi_{1i}\cap \partial E_i)$ are $C^1$ curves for $i=2,3$. Moreover by \eqref{pi3} $\overline{\partial E_i \setminus \pi_{1i}}$ intersects $\Gamma_i$ at a constant angle of 120 degrees. Finally, \eqref{int:point} and Corollary \ref{corstat} imply that $\overline{\partial E_i \setminus \pi_{1i}}$ are surfaces with constant mean curvature $\lambda$. As in the proof of \eqref{claim3} of Section \ref{main_double}, we deduce that they are smooth manifolds with smooth boundary. Hence we can apply the classical Alexandrov moving plane method \cite[Corollary 4.1.3]{LOP} to deduce that $E_i$ is a ball intersected with $\overline{\Sigma_i\cup \Sigma_4}$, for $i =2,3$. This, together with Corollary \ref{corstat}, implies that $\Gamma_2$ and $\Gamma_3$ are circles of the same radius and meeting tangentially at the point $x = 0$. By \eqref{pi3}, there is a unique sphere $S$ of curvature $\lambda$ with center in $\Sigma_1$ containing $\Gamma_i$ for $i = 2,3$ and intersecting these curves at an angle of 120 degrees. Consider finally $S' := \overline{\partial E_1 \cap \Sigma_1}$. Then, $\overline{S\cap\Sigma_1}$ and $S'$ share the same boundary $\Gamma_2 \cup \Gamma_3$ and intersect $\partial \Sigma_1$ at the boundary forming the same angle (outside of $x$). We wish to show that $S'$ and $S$ coincide in $\Sigma_1$. To do so, we consider the surface $S''$ obtained by gluing together $S'$ and $S \cap (\R^3 \setminus \Sigma_1)$. Since $S'$ and $S$ have the same tangent plane at points of $\Gamma_2 \cup \Gamma_3 \setminus \{x\}$, we see by Lemma \ref{div} that $S''$ is a constant mean curvature surface except, possibly, at $x$. Now we write $S$ and $S''$ as graphs of functions $u$ and $v$ near any point of $\Gamma_2\cup \Gamma_3 \setminus \{x\}$. Using Proposition \ref{varigraph} and elliptic regularity theory, $u$ and $v$ are analytic functions which, by construction, coincide on an open set. Thus, $S$ and $S''$ coincide in all such neighborhoods. Now essentially the same argument shows the same for points of $\overline{S'' \cap \Sigma_1} = S'$. This shows that $S' = \overline{S\cap \Sigma_1}$ as wanted. Therefore $V_\E$ is in (Configuration 3).

\subsection{Proof of (Case 4) if and only if (Configuration 4).} First we observe that $\pi_{ij}\neq \pi_{lm}$ for all $1\le i \neq j \le 3$, $1\le l \neq m \le 3$, $ (i,j) \neq (l,m)$. This is proved repeating verbatim the analogous proof at the beginning of Subcase \ref{coincid:pla}, i.e. can be excluded by considering the blow-up of $V_\E$ at a point $x \in \partial E_1\cap \partial E_2\cap\partial E_3$. More in general, none of the couples of planes $\pi_{ij}$, $\pi_{lm}$, $1\le i \neq j \le 3$, $1\le l \neq m \le 3$, $ (i,j) \neq (l,m)$ are parallel. Indeed suppose for instance that $\pi_{12} \neq \pi_{13}$ but $\pi_{12}$ is parallel to $\pi_{13}$, then it is immediate to see that $E_2$ cannot interact with $E_3$. The proof is now divided in subcases. 

\subsubsection{ (Subcase 1): $\pi_{12} \cap \pi_{13} \cap \pi_{23}=\emptyset$.}\label{subcase1:end} The three planes are parallel to a common line $L$. Hence they intersect pairwise in such a way to divide $\R^3$ in seven cylindrical regions $\Sigma_1,\Sigma_2,\Sigma_3,\Sigma_4,\Sigma_5,\Sigma_6,\Sigma_7$, where $\Sigma_7$ is a triangular cylinder. It is easy to check via simple combinatorics that this subcase does not allow $E_i$ to interact with $E_j$, for all $1\le i \neq j\le 3$, except for the case where, up to relabeling
\[
E_1\subset S_1:= \overline{\Sigma_1\cup\Sigma_2}, \quad E_2\subset S_2 := \overline{\Sigma_3\cup\Sigma_4}, \quad E_3\subset S_3 :=\overline{\Sigma_5\cup\Sigma_6}.
\]
In particular, $\partial E_1\cap\partial E_2 \cap \partial E_3 = \emptyset$. As in Case 2 of Theorem \ref{doublethm}, it follows that, for all $i=1,2,3$, $\overline{E_i \cap \intt S_i}$ is a smooth constant mean curvature surface with boundary meeting the boundary of the wedge $S_i$ at a constant angle of 120 degrees. Moreover, $\partial E_i$ does not intersect the edge of the wedge $S_i$, as otherwise by convexity it would contain a disk. Finally, by Proposition \ref{RT}, $\overline{E_i \cap \intt S_i}$ is of ring-type in the sense of Definition \ref{def:DTRT}. We can thus apply \cite[Theorem 1]{MCC} to find that the angle $\alpha_i$ of the wedge must satisfy
$
\alpha_i < \pi/3.
$
On the other hand, $\beta_i := \pi- \alpha_i$ are the angles of the section  orthogonal to $L$ of $\Sigma_7$, which is triangular. Thus,
\[
3\pi - \sum_{i=1}^3 \alpha_i= \sum_{i=1}^3 \beta_i = \pi,
\quad 
\mbox{ hence }
\quad
\sum_{i=1}^3\alpha_i = 2\pi,
\]
which is in contradiction with $\alpha_i < \pi/3$ for all $i =1,2,3$. Thus, Subcase 1  cannot happen.

\subsubsection{ (Subcase 2): $\pi_{12} \cap \pi_{13} \cap \pi_{23}=\{p\}$.} The three planes divide $\R^3$ in eight (unbounded) tetrahedral regions. In order to geometrically visualize the eight regions, assume that the three planes are the coordinates planes and that the eight regions $\Sigma_1,\Sigma_2,\Sigma_3,\Sigma_4,\Sigma_5,\Sigma_6,\Sigma_7,\Sigma_8$ are the eight octants, containing respectively the points $(1,1,1),(1,-1,1),(-1,-1,1),(-1,1,1),(1,1,-1),(1,-1,-1),(-1,-1,-1),(-1,1,-1)$. The general case is entirely analogous. Up to indices permutation and reflections, simple combinatorics shows that $E_1\subset \overline{\Sigma_1\cup\Sigma_5}$, $E_2\subset \overline{\Sigma_7\cup\Sigma_8}$, $E_3\subset \overline{\Sigma_2\cup\Sigma_3}$ and 
\begin{equation}\label{asymm}
\partial E_1 \cap \partial E_2 \subset \partial \Sigma_5 \cap \partial \Sigma_8, \qquad \mbox{while} \qquad \partial E_1 \cap \partial E_3 \subset \partial \Sigma_1 \cap \partial \Sigma_2.
\end{equation} 
As usual, we deduce as in \eqref{claim1} of Section \ref{main_double} that, if we let $\sigma_2$ and $\sigma_3$ be the two (closed) half-planes bounding the wedge $\overline{\Sigma_1\cup\Sigma_5}$, then $\overline E_1 \cap (\sigma_2 \cup \sigma_3) = \partial E_1 \cap (\sigma_2 \cup \sigma_3) = (\partial E_1\cap \partial E_2) \cup  (\partial E_1\cap \partial E_3)$.
Without loss of generality, we have
\[
\overline E_1 \cap \sigma_2 = \partial E_1 \cap \partial E_2 \quad \text{ and } \quad \overline E_1 \cap \sigma_3 = \partial E_1 \cap \partial E_3.
\] 
Furthermore, $S:=\overline{\partial E_1\cap \intt(\overline{\Sigma_1\cup\Sigma_5})}$ meets at a constant angle of 120 degrees all points of $\partial E_1 \cap \partial E_2 \setminus (\partial E_1\cap\partial E_2 \cap \partial E_3)$ and $\partial E_1 \cap \partial E_3 \setminus (\partial E_1\cap\partial E_2 \cap \partial E_3)$. Corollary \ref{corstat} shows that $S$ is a constant mean curvature surface, and hence the usual regularity analysis of \eqref{claim3} of Section \ref{main_double} applies to show that the surface is smooth up to the boundary, except for the points in $\partial E_1\cap\partial E_2 \cap \partial E_3$. We shall now prove that this is not possible. To this aim, we consider the following two options. The first option is
\begin{equation}\label{case1semifinal}
\partial E_1\cap \partial E_2 \cap \partial E_3 = \emptyset.
\end{equation}
In case \eqref{case1semifinal} does not hold, observe that 
$
\partial \Sigma_1 \cap \partial \Sigma_2 \cap \partial \Sigma_5 \cap \partial \Sigma_8 = \{p\}
$
and hence by \eqref{asymm} we conclude that the second option is:
\begin{equation}\label{case2semifinal}
\partial E_1\cap \partial E_2 \cap \partial E_3 = \{p\}.
\end{equation}
In case \eqref{case1semifinal} holds, we have that $\partial E_1$ does not touch the edge of the wedge $\overline{\Sigma_1\cup\Sigma_5}$, while if \eqref{case2semifinal} holds, $p$ is the only point where $\partial E_1$ touches the edge of the wedge $\overline{\Sigma_1\cup\Sigma_5}$. Indeed, if any of these assertions were false, $\partial E_1 \setminus (\partial E_2 \cup \partial E_3)$ would contain a disk, which leads to a contradiction as in the proof of \eqref{claim1} of Section \ref{main_double}. If \eqref{case1semifinal} holds, $\overline{\partial E_1 \setminus (\partial E_2 \cup \partial E_3)}$ is a smooth ring-type surface by Proposition \ref{RT}. Thus, it follows by \cite[Theorem 1]{PRT} that $S$ is a piece of a sphere. Let $R$ be one of the two rotations of $\R^3$ that brings $\sigma_3$ into $\sigma_2$. As $E_1$ is a ball intersected with the wedge $\overline{\Sigma_1\cup\Sigma_5}$, then either
\[
\partial E_1 \cap \partial E_2 =\overline E_1 \cap \sigma_2 \subseteq R(\overline E_1 \cap \sigma_3) = R(\partial E_1 \cap \partial E_3),
\]
or\[
\partial E_1 \cap \partial E_2 =\overline E_1 \cap \sigma_2 \supseteq R(\overline E_1 \cap \sigma_3) = R(\partial E_1 \cap \partial E_3).
\]
However, any of the two previous alternatives is impossible by \eqref{asymm}. This excludes \eqref{case1semifinal}. If \eqref{case2semifinal} holds, then we see that \eqref{asymm} is in contradiction with Proposition \ref{pointsym}. This shows that Subcase 2 does not occur.

\subsubsection{ (Subcase 3): $\pi_{12} \cap \pi_{13} \cap \pi_{23}=L$.} The three planes divide $\R^3$ in six cylindrical regions. Again via simple combinatorics it is easy to check that the only configuration that allows $E_i$ to interact with $E_j$, for all $1\le i \neq j\le 3$, is that each of the $E_i$ occupies the union $\Sigma_i$ of two consecutive cylindrical regions, with $\cup_{l,m} (\partial E_i\cap \pi_{lm})$ contained (up to an $\Haus^2$-measure zero set) in three half planes meeting at $L$. We start by the usual observation that $
\partial E_i \cap \pi_{ij}  = \partial E_j \cap \pi_{ij}$ for every $1\leq i<j\leq 3$, see \eqref{claim1} of Section \ref{main_double}. This yields:
\begin{equation}\label{intersecLa}
\partial E_i \cap L  = \partial E_1 \cap L, \quad \forall i =2,3.
\end{equation}
First, we wish to show that the case
\begin{equation}\label{case331a}
 \partial E_1 \cap L =\partial E_2 \cap L = \partial E_3 \cap L = \emptyset
\end{equation}
cannot hold. Indeed, by Corollary \ref{corstat}, for all $i =1,2,3$, $\partial E_i \cap \Sigma_i$ is a surface with constant mean curvature that meets $\pi_{12}$ and $\pi_{13}$ with constant angle, as can be seen by the usual blow-up analysis, see \eqref{claim2}-\eqref{claim3} of Theorem \ref{doublethm}. Moreover, by Proposition \ref{RT}, $\overline{\partial E_i\cap \Sigma_i}$ is of ring-type according to Definition \ref{def:DTRT}. Thus we are in the assumptions of \cite[Theorem 1]{MCC}, which tells us that the opening angle of the wedge $\Sigma_i$ must be
$
\alpha_i \le \pi/3$, for every $i =1,2,3.
$
Since $\sum_i\alpha_i = 2\pi$, then \eqref{case331a} cannot hold. The same argument, using Theorem \ref{pointrigid} in place of \cite[Theorem 1]{MCC}, excludes the case
\begin{equation}\label{intersecLabis}
 \partial E_1 \cap L =\partial E_2 \cap L = \partial E_3 \cap L = \{p\}.
\end{equation}
Thus, to conclude the analysis of (Subcase 3), we shall consider the case in which \eqref{case331a}-\eqref{intersecLabis} do not hold, i.e.:
\begin{equation}\label{case3322a}
\mbox{there exists $x_1\neq x_2$ such that }\partial E_1 \cap L = \partial E_2\cap L = \partial E_3\cap L = [x_1,x_2].
\end{equation}
Arguing similarly to \eqref{claim2}-\eqref{claim3} of Theorem \ref{doublethm}, we have that for all $i=1,2,3$:
\begin{itemize}
\item $\overline {\partial E_i \cap \Sigma_i}$ intersects $\partial \Sigma_i \setminus L$ with constant angle of $120$ degrees;
\item If $\partial E_i\cap \partial \Sigma_i \subset \pi_{jk}\cup \pi_{\ell m}$, then $\partial_{\pi_{jk}}(\partial E_i \cap \pi_{jk}) \setminus L$ and $\partial_{\pi_{\ell m}}(\partial E_i \cap \pi_{\ell m}) \setminus L$ are smooth open curves;
\item $\overline{\partial E_i\cap \Sigma_i}$ is a smooth surface with smooth boundary except possibly for $\{x_1,x_2\}$.
\end{itemize}
One last information that can be deduced as in \eqref{claim2}, i.e. by taking blow-ups of $V_\E$ at $x \in (x_1,x_2)$, is that
\begin{equation}\label{pij120}
\text{the opening angle of } \partial \Sigma_i \text{ is 120 degrees.}
\end{equation}
Our aim is to show that, for all $i =1,2,3$,
\begin{equation}\label{Eisball}
E_i \text{ is a ball intersected with }\Sigma_i.
\end{equation}
Suppose for a moment that \eqref{Eisball} holds. Then, all these balls must have the same radius by Corollary \ref{corstat} and must meet the boundary of the wedges in angles of 120 degrees. Combining this with \eqref{pij120}, we infer that $E_1,E_2$ and $E_3$ form a standard triple bubble as in Definition \ref{STTB} and we thus conclude that we are in (Configuration 4). This would conclude the proof of the classification. We are thus just left to prove \eqref{Eisball}.
To this aim, we can either adapt the same proof of Theorem \ref{pointrigid}, or alternatively we can apply \cite[Theorem 1]{FMC}. We decided to follow the second method. In the following, we will assume without loss of generality $i = 1$. We first state the required assumptions which, in \cite{FMC}, are collected in Hypothesis B, and subsequently we show why they are satisfied in our setting.
\\
\\
Let $S \subset \R^3$ be a surface with boundary lying in a collection of (finitely many) planes $\pi_j \subset \R^3$, such that the intersection angle with $\pi_j$ is a constant $\gamma_j$, and such that $S$ has constant mean curvature away from the intersections. Suppose also that the planes bound an open connected region $\mathcal{I}$ containing $S$. A disk-type surface $S$ is said to satisfy Hypothesis B if the following conditions hold\footnote{The fact that $S$ is of disk-type and satisfies the Angle Condition is not required in \cite[Hypothesis B]{FMC}, but it is part of the assumptions of \cite[Theorem 2]{FMC}, so we write it here for the sake of exposition.}.
\begin{description}
\item[(Topological Condition)] Let $D := \overline{B_1(0)}$ and let $v_i$, $i \in \{1,\dots, V\}$, be a finite collection of points in $\partial D$ clockwise-ordered. There is a local homeomorphism $\Phi$ of $D_v := D\setminus \{v_1,\dots, v_V\}$ onto $S$, i.e. for all $a \in D_v$, there is some neighborhood $B(a)$ of $a$ in $D_v$ such that $\Phi$ restricted to $B(a)$ is a homeomorphism;
\item[(Smoothness Condition)] Let $A_i$ be an (open) arc on $\partial D$ connecting $v_i$ and $v_{i + 1}$. Let also $\Phi: D\to S$ be the map of the first point. We require that $\Phi$ can be made smooth everywhere, except, possibly, at $v_i$. This means that for all point $a \in \intt D$, there is a neighborhood $U = U(a)$ and a homemorphism $\psi: B_1(0) \to U(a)$ such that $\Phi\circ\psi: B_1(0) \to S$ is a smooth embedded (open) surface, and that for all points $a \in \partial D\setminus \{v_1,\dots, v_V\}$, there is a neighborhood $U = U(a) \subset D$ and a homeomorphism $\psi: B_1^+ \to U(a)$ such that $\Phi\circ \psi: B_1^+ \to S$ is a smooth embedded surface with boundary. Here, $B_1^+ := \{(x_1,x_2) \in B_1(0): x_2 \ge 0\}$.
\item[(Vertex Condition)] For every $v_i$, there is a couple of intersecting planes $\pi_j$ and $\pi_k$, a neighborhood $U$ of $v_i$ in $D$ and a neighborhood $N$ of $O = \pi_j\cap \pi_k \cap \pi$ in $\pi \cap \overline {\mathcal{I}}$, such that $\Phi(B\setminus\{v_i\})$ is the graph of a function $u_i$ over $N \setminus O$. Here, $\pi$ is a plane orthogonal to both $\pi_j$ and $\pi_k$.
\item[(Separation Condition)] Define a vertex $\mathcal{V}_i$ to be the triple $\{(\pi_j,\pi_k),N,u_i\}$. If $u_i$ extends continuously up to $\pi_j\cap \pi_k$ with continuous first derivatives, then this determines a vertex point, again denoted by $\mathcal{V}_i$. We required that, for all $i$, $u_i$ determines a vertex point $\mathcal{V}_i$, $\Phi$ extends continuously to the boundary and that $\Phi^{-1}(\mathcal{V}_i) = v_i$.
\item[(Angle Condition)] For all couple of planes $\pi_j$ and $\pi_k$, the contact angles $\gamma_j$ and $\gamma_k$ with respectively $\pi_j$ and $\pi_k$ must lie in the interior of the rectangle of \cite[Fig. 3]{FMC}-\cite[Fig. 5]{CFR}.
\end{description}

In our case, we have only two planes, $\pi_1 := \pi_{12}$, $\pi_2 := \pi_{13}$, and the constant angles $\gamma_1,\gamma_2$ are equal to 120 degrees. Moreover, $\mathcal{I} = \Sigma_{1}$ and we will show that the vertex points are precisely $\mathcal{V}_i = x_i$. It is convenient to set $S := \overline{\partial E_1 \cap \Sigma_1}$. Recall that, by \eqref{pij120}, $\overline \Sigma_1$ is a wedge of opening angle of 120 degrees. Thus, one can check immediately that the \textbf{(Angle Condition)} is satisfied. Next, to build the map $\Phi$, we do the following. Consider a ball $B_R(c)$, where $c = \frac{x_1 + x_2}{2}$ and $R$ is so large that $E_1 \subset B_{\frac{R}{2}}(c)$. Suppose without loss of generality that $c = 0$. Consider the set 
\[
P =\partial B_R(c) \cap \overline{\Sigma_{1}}.
\]
The map $F(x) := R\frac{x}{|x|}$, defined for $x \neq 0$, is a global homeomorphism between $S$ and $P$. It is also immediate to find a homeomorphism $g$ between $D$ and $P$. Thus, there exists a homeomorphism $\Phi: D \to S$. We let $v_i := \Phi^{-1}(x_i)$, and in fact one can build $g$ to be a diffeomorphism between $D \setminus \{v_1,v_2\}$ and $P \setminus \{\Phi(x_1),\Phi(x_2)\}$. This is showing that $S$ is a disk-type surface and that the \textbf{(Topological Condition)} holds. To show the \textbf{(Smoothness Condition)}, we begin by considering a point $a \in \intt D$. We can simply use Proposition \ref{graph} to write $S$ as the graph of a Lipschitz (convex) function $\phi$ near $\Phi(a)$, say in a neighborhood $N(\Phi(a))$. As $S$ is of constant mean curvature, standard (interior) regularity theory shows that $\phi$ is smooth. Up to translating and rotating $S$, we can suppose that $S$ is parametrized by $\Gamma_\phi: (x_1,x_2)\in B_r(0) \mapsto ((x_1,x_2),\phi(x_1,x_2))\in N(\Phi(a))$ near $\Phi(a)$. Then, the map $\psi := \Phi^{-1}\circ \Gamma_\phi : B_r(0) \to \intt D$ provides the map required by the \textbf{(Smoothness Condition)}. Similarly, near points $a \in \partial D\setminus \{v_1,v_2\}$, we can again write $S$ as the graph over $B_1^+(0)$ of a Lipschitz function $f$ which satisfies an elliptic PDE since $S$ is of constant mean curvature, and fulfills some appropriate boundary condition due to the fact that $S$ intersects $\partial \Sigma_1$ with constant angle. As in Subsection \ref{subsubreg}, we find that $f$ is smooth. We can define $\psi$ similarly as above. This shows that the \textbf{(Smoothness Condition)} is satisfied at every point of $S \setminus \{x_1,x_2\}$. To show the \textbf{(Vertex Condition)}, i.e. to write, for a small $\delta > 0$, $S \cap B_\delta(x_i)$ as the graph of a convex (or concave) function on a plane orthogonal to $\pi_{1j}$ for $j =2,3$ at $x_i$  for $i =1,2$, we can reason similarly to Lemma \ref{grafo}. The functions built with that method are continuous. To see that they are $C^{1}$ up to $x_i$, one may employ \cite[Theorem 1]{SIMREG}. Thus, $x_i$ are vertices as in the \textbf{(Separation Condition)}. Moreover, by construction $\Phi$ is continuous up to $x_i$ and $v_i := \Phi^{-1}(x_i)$ for all $i =1,2$. This finishes the proof that our surface $S$ fulfills the assumptions of \cite[Theorem 1]{FMC}, and is therefore a piece of sphere. The analysis of the critical points for the triple bubble problem is thus complete.

\appendix

\section{First variation of convex $k$-bubbles}\label{app:stat}
Here we wish to give a sketch of the proof of Proposition \ref{equiva}, that we recall below. As said above, the proof closely follows \cite[Appendix B-C]{MNE}.
\begin{proof}[Proof of Proposition \ref{equiva}]
We introduce the intermediate condition:
\begin{equation}\label{ug1}
[\delta V_\E](g)=0 \quad \mbox{for every $g \in C^\infty_c(\R^n,\R^n)$ satisfying
$\int_{\partial^*E_i}(n_{E_i},g)d\mathcal{H}^{n-1} = 0$,}
\end{equation}
where we used Lemma \ref{lele} to infer $\theta=1$. Then, the following hold:
\[
\eqref{doubprob} \Leftrightarrow \eqref{ug1} \Rightarrow V_\E \text{ is stationary for the $k$-bubble problem}.
\]
Indeed, $\eqref{doubprob} \Rightarrow \eqref{ug1}$ is immediate. The converse is a linear algebra computation, which can be proved as in \cite[Corollary 6.9]{DRKS}. To prove that $\eqref{ug1} \Rightarrow V_\E \text{ is stationary for the $k$-bubble problem}$, it is sufficient to notice that, if $\Phi_t$ is the flow associated to $g$, the condition $|\Phi_t(E_i)| = |E_i|$
implies, taking the derivative at $t = 0$,
\[
0 = \int_{E_i}\dv(g)dx = \int_{\partial E_i^*}(g,n_{E_i})d\mathcal{H}^{n-1}.
\]
Thus, the proof is finished if we show that
\begin{equation}\label{interme}
V_\E \text{ is stationary for the $k$-bubble problem} \Rightarrow \eqref{ug1}.
\end{equation}
From now on, assume $V_\E$ is stationary for the $k$-bubble problem and $g \in C^\infty_c(\R^n,\R^n)$ is such that 
$$\int_{\partial^* E_i}(g,n_{E_i})d\mathcal{H}^{n-1} = 0.$$
 We define
\[
E_{k + 1} := \left(\bigcup_{i = 1}^k\overline{E_i}\right)^c.
\]
The key point, as in \cite[Lemma C.3]{MNE}, is to find $k$ vector fields $Y_1,\dots, Y_k \in C^\infty_c(\R^n,\R^n)$ with pairwise disjoint supports such that the vectors
\[
w_\ell := \left(\int_{\partial^* E_1}(Y_\ell,n_{E_1})d\mathcal{H}^{n-1},\dots, \int_{\partial^* E_k}(Y_\ell,n_{E_k})d\mathcal{H}^{n-1}, \int_{\partial^* E_{k+ 1}}(Y_\ell,n_{E_{k+1}})d\mathcal{H}^{n-1}\right) \in \R^{k + 1}, \mbox{ $\ell=1,\dots,k$}
\]
span $S := \{x \in \R^{k + 1}: \sum_{i}x_i = 0\}$. Once this is shown, the proof can be concluded as in \cite[Lemma C.3]{MNE}, except that stationarity is used instead of minimality. We now move to the existence of the vector fields $\{Y_s\}_{1\le s \le k}$.
\\
\\
In order to find vector fields $\{Y_\ell\}_{1\le \ell \le k}$ with properties as above, we wish to use the same proof of \cite[Lemma C.2]{MNE}. This can be done once we prove that for every $1\le i < j \le k+1$, if $\mathcal{H}^{n-1}(\partial E_i \cap \partial E_j) > 0$, then there exists a point $x_{ij} \in \partial E_i \cap \partial E_j$ and $\delta > 0$ with the following properties:
\begin{enumerate}
\item $B_{\delta}(x_{ij}) \cap \overline{E_\ell} = \emptyset,\quad \forall \ell \neq i,j$;\label{app:primo}
\item $V_\E \llcorner B_{\delta}(x_{ij})$ is a smooth $(n - 1)$-dimensional submanifold.\label{app:terzo}
\end{enumerate}
These properties are shown in \cite{MNE} using the minimality of the $k$-cluster. Here we will show it by exploiting convexity. Moreover, denoting with $\{e_i\}_{i=1}^{k+1}$ the canonical base of $\R^{k+1}$, we also need to show the following result, compare \cite[Lemma C.1]{MNE}: 
\begin{equation}\label{app:graph}
\mbox{the set $O := \{e_i - e_j: \mathcal{H}^{n - 1}(\partial E_i \cap \partial E_j) > 0\}$ spans $S$.}
\end{equation}
We start by showing the first statement. If $j < k + 1$, then the assertion is simple: in that case, $\partial E_i \cap \partial E_j$ is contained in a plane $\pi$, and it suffices to take $x_{ij} \in \intt_{\pi}(\partial E_i \cap \partial E_j)$. Properties \eqref{app:primo}-\eqref{app:terzo} follow rather easily if $\delta$ is chosen small enough. Assume now $j = k + 1$. Take now $x_{i(k+1)} \in \partial E_i \cap \partial E_{k + 1} \setminus \cup_{\ell \neq i, k +1 }(\partial E_\ell)$, which is possible by \eqref{intbound}. Since the sets $E_j$ are disjoint for all $j =1,\dots, k +1$, it follows that there exists a small $\delta >0$ such that $\overline{B_\delta(x_{i(k+1)})} \cap \overline E_\ell = \emptyset$ for all $\ell \neq i,k + 1$. This shows \eqref{app:primo}. To show \eqref{app:terzo}, we first notice that, up to decreasing $\delta > 0$, by Proposition \ref{graph}, we can suppose that $V_\E\llcorner B_\delta(x_{i(k+1)})$ is given by the graph of a Lipschitz function $u$. Then, it suffices to notice that by restricting the stationarity condition \eqref{doubprob1} to vector fields $g$ supported in $B_\delta(x_{i(k + 1)})$, one has that $V_\E\llcorner  B_\delta(x_{i(k+1)})$ has constant mean curvature. To see this, one can adapt the proof for minimizers of \cite[Section 17.5]{MAG} to the case of stationary points. In particular, $u$ is analytic and \eqref{app:terzo} is shown. We now move to showing \eqref{app:graph}. Notice that if for all $1 \le i \le k$, $\mathcal{H}^{n-1}(\partial E_i \cap \partial E_{k+1}) > 0$, \eqref{app:graph} holds. More generally, if, given any $i_0 \in \{1,\dots, k\}$, 
\begin{equation}\label{path}
\mbox{there exist $i_1,\dots, i_\ell$ such that $i_\ell = k + 1$ and $\mathcal{H}^{n-1}(\partial E_{i_{j}}\cap \partial E_{i_{j + 1}}) > 0 \,\, \forall j \in \{0,\dots, \ell - 1\},$}
\end{equation}
then \eqref{app:graph} holds. To see that \eqref{path} holds, take any point $x_0 \in E_{i_0}$ and a small ball $B_\eps(x_0) \subset E_{i_0}$. Consider the union $P$ of the  projections onto $\partial B_\eps(x_0)$ of the sets
\[
\mbox{$\partial E_a\cap \partial E_b$ with $\mathcal{H}^{n - 1}(\partial E_a\cap \partial E_b)=0$, for all $1\leq a\neq b\leq k+1$. }
\]
Since the projection is Lipschitz and all of these sets have $\mathcal{H}^{n-1}$ measure zero, it follows that $P$ has zero $\mathcal{H}^{n - 1}$ measure $0$. Since $\mathcal{H}^{n - 1}(\partial B_\eps(x_0)) > 0$, we can then find $w \in \partial B_\eps(x_0) \setminus P$. Let $r(t)$ the parametrization of the line $r(t) = x_0 + tw$. By construction, $r(0) \in E_i$ and for every $t$ large enough, $r(t) \in E_{k + 1}$ since every other $E_j$ is bounded and $(B_R(0))^c \subset E_{k + 1}$ for some large $R > 0$. Moreover there exists $t_1>0$ such that $r(t_1) \in \partial E_{i_0} \cap \partial E_b$ for some $b\neq i_0$. Then $\mathcal{H}^{n-1}(\partial E_{i_0} \cap \partial E_b) > 0$ by definition of $P$, and we set $i_1:=b$. Proceeding iteratively along $r(t)$, this construction provides us with the required path \eqref{path} and concludes the proof.

\end{proof}

\bibliographystyle{plain}
\bibliography{ARpaperDouble}

\end{document}